\definecolor{dblue}{rgb}{0,0,.6}
\newtheorem{theorem}{Theorem}[section]
\theoremstyle{plain}
\newtheorem{conjecture}[theorem]{Conjecture}
\newtheorem{corollary}[theorem]{Corollary}
\newtheorem{definition}[theorem]{Definition}
\newtheorem{lemma}[theorem]{Lemma}
\newtheorem{proposition}[theorem]{Proposition}
\newtheorem{remark}[theorem]{Remark}
\newcommand{\Z}{\mathbb Z}
\newcommand{\Q}{\mathbb Q}
\newcommand{\C}{\mathbb C}
\newcommand{\N}{\mathbb N}
\newcommand{\CP}{\mathbb P}
\newcommand{\Hom}{\operatorname{Hom}}
\newcommand{\Pic}{\operatorname{Pic}}
\newcommand{\Div}{\operatorname{Div}}
\newcommand{\Aut}{\operatorname{Aut}}
\newcommand{\id}{\operatorname{id}}
\newcommand{\GL}{\operatorname{GL}}
\newcommand{\Spec}{\operatorname{Spec}}
\newcommand{\Alb}{\operatorname{Alb}}
\newcommand{\red}{\operatorname{red}}
\newcommand{\Jac}{\operatorname{Jac}}
\newcommand{\sm}{\operatorname{sm}}
\newcommand{\dashedlongrightarrow}{\xymatrix@1@=15pt{\ar@{-->}[r]&}}
\renewcommand{\longrightarrow}{\xymatrix@1@=15pt{\ar[r]&}}
\renewcommand{\mapsto}{\xymatrix@1@=15pt{\ar@{|->}[r]&}}
\renewcommand{\twoheadrightarrow}{\xymatrix@1@=15pt{\ar@{->>}[r]&}}
\newcommand{\hooklongrightarrow}{\xymatrix@1@=15pt{\ar@{^(->}[r]&}}
\newcommand{\congpf}{\xymatrix@1@=15pt{\ar[r]^-\sim&}}
\renewcommand{\cong}{\simeq}
\begin{document}   

\title[Holomorphic one-forms without zeros on threefolds]{Holomorphic one-forms without zeros on threefolds}

\author{Feng Hao}

\address{Mathematisches Institut, LMU M\"unchen, Theresienstr. 39, 80333 M\"unchen, Germany}

\curraddr{
KU Leuven, Celestijnenlaan 200B, B-3001 Leuven, Belgium 
}
\email{feng.hao@kuleuven.be}

\author{Stefan Schreieder}

\address{Mathematisches Institut, LMU M\"unchen, Theresienstr. 39, 80333 M\"unchen, Germany}
\email{schreieder@math.lmu.de}

\date{December 26th, 2019}
\subjclass[2010]{primary 14F45, 14J30, 32Q55; secondary 32Q57} 
 

\keywords{Topology of algebraic varieties, one-forms, minimal model program, classification, generic vanishing, threefolds.}

\begin{abstract}   
We show  that a smooth complex projective threefold admits a holomorphic one-form without zeros if and only if the underlying real $6$-manifold is a $C^\infty$-fibre bundle over the circle, and we give a complete classification of all threefolds with that property.
Our results prove a conjecture of Kotschick in dimension three. 
\end{abstract}

\maketitle

\section{Introduction}
 
\subsection{Holomorphic one-forms and fibre bundles over the circle}
For a smooth complex projective variety $X$ we may consider the following conditions:
\begin{enumerate}[(A)]
\item $X$ admits a holomorphic one-form without zeros; 
\label{item:holo}
\item 
\label{item:closed}
$X$ admits a real closed $1$-form without zeros;
or, by Tischler's theorem \cite{Ti} equivalently, the underlying differentiable manifold is a  $C^\infty$-fibre bundle over the circle.
\end{enumerate}
Note that while (\ref{item:holo}) is an algebraic condition on $X$, condition (\ref{item:closed}) is a differential geometric one which by \cite{Ti} characterizes the smooth manifold which underlies $X$ as $[0,1]\times M/\sim$, where $M$ is a closed manifold of real dimension $2\dim(X)-1$ and where $0\times M$ is identified with $1\times M$ via some diffeomorphism of $M$.

While  (\ref{item:holo}) $\Rightarrow$ (\ref{item:closed})  is clear, Kotschick conjectured \cite{Ko} that both condition might be equivalent to each other.
In  \cite{Sch19}, the second author developed an approach to this conjecture, showing that (\ref{item:closed}) implies 
\begin{enumerate}[(A)]
\setcounter{enumi}{2}
\item \label{item:exact} there is a holomorphic one-form $\omega \in H^0(X,\Omega_X^1)$ such that for any finite \'etale cover $\tau:X'\to X$, the sequence 
$$  H^{i-1}(X',\C)\stackrel{\wedge \omega'}\longrightarrow H^{i}(X',\C)\stackrel{\wedge \omega'}\longrightarrow H^{i+1}(X',\C) 
$$
given by cup product with $\omega':=\tau^\ast \omega$
is exact for all $i$.
\end{enumerate}
Moreover, all three conditions above coincide in dimension two \cite[Theorem 1.3]{Sch19}.
In this paper, we address the much more difficult case of threefolds. 

\begin{theorem} \label{thm:A=B=C}
Let $X$ be a smooth complex projective threefold.  
Then all three conditions above are equivalent to each other: (\ref{item:holo}) $ \Leftrightarrow$ (\ref{item:closed}) $ \Leftrightarrow$ (\ref{item:exact}).
\end{theorem}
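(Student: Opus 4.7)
Since (A)$\Rightarrow$(B) is clear and (B)$\Rightarrow$(C) is established in \cite{Sch19}, only (C)$\Rightarrow$(A) needs proof. My plan is to use the étale-cover-stable exactness in (C) to produce, possibly after passing to a finite étale cover $X'\to X$, a smooth surjective morphism $f\colon X'\to E'$ to an elliptic curve such that the generating invariant form of $E'$ pulls back to a nowhere-vanishing holomorphic one-form on $X'$; since (A) descends from $X'$ to $X$ under finite étale covers, this would yield (A) on $X$.

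The first step is to extract the correct map $f$ from $\omega$. Viewing $\omega$ as a linear form on the tangent space of $\Alb(X)$ at the origin, one obtains a composition $X\to\Alb(X)\to E$, where $E$ is the elliptic quotient cut out by $\ker\omega$ and $\omega$ is the pullback of the translation-invariant one-form on $E$. The exactness in (C) for $i=1$, applied across all étale covers, combined with generic vanishing in the form of Green--Lazarsfeld and Hacon, forces $\chi(X',\omega_{X'})=0$ on every cover $X'\to X$; this in turn implies, via the Popa--Schnell theorem on zeros of one-forms, that the Albanese map is surjective onto a positive-dimensional subvariety and that $f$ is surjective with connected fibres after a suitable étale base change on $E$.

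The second step, which forms the technical heart of the proof, is to show that on a relative minimal model $Y\to E$ of $X$ over $E$ (produced by threefold MMP), the fibration $f$ becomes smooth after a further étale base change on $E$. The generic fibre $F$ of $f$ is a smooth projective surface, and a Leray spectral sequence argument combined with the two-dimensional case \cite[Theorem 1.3]{Sch19} shows that $F$ itself admits a nowhere-vanishing holomorphic one-form. A case distinction by the Kodaira dimension $\kappa(F)\in\{-\infty,0,1,2\}$, combined with the classification of relatively minimal surface fibrations over an elliptic curve, reduces the analysis to a short list of structural possibilities in each of which (A) can be verified directly, or the singular and multiple fibres can be removed by a suitable étale cover of $E$ that trivialises the monodromy.

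The main obstacle will be the case in which $F$ is of general type, $\kappa(F)=2$. Here the zero locus of $\omega$ on $X$ may acquire horizontal components not contained in fibres of $f$, and eliminating these requires a delicate interplay between the output of the threefold MMP and the generic vanishing package. I expect this case to demand a careful analysis of the cohomological support loci on $\Alb(X)$, together with the preservation of (C) under a tower of étale covers, in order to force the horizontal components to be contracted or absorbed into multiple fibres that disappear after base change. This step should also produce the classification statement announced in the abstract, by enumerating the possible minimal models and their fibration structures; the remaining low-Kodaira-dimension fibre cases, although requiring separate arguments (for instance isotriviality criteria for elliptic and abelian-surface fibrations), should follow more directly from the structural results cited above.
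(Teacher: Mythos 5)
Your outer frame agrees with the paper: (A) $\Rightarrow$ (B) is clear, (B) $\Rightarrow$ (C) is from \cite{Sch19}, and everything rests on (C) $\Rightarrow$ (A). But your first step contains a genuine gap. You propose to obtain a surjection $X\to E$ to an elliptic curve as the composition $X\to\Alb(X)\to E$, ``where $E$ is the elliptic quotient cut out by $\ker\omega$''. No such quotient exists in general: $\ker\omega\subset T_0\Alb(X)$ is merely a hyperplane, and it is the tangent space of an abelian subvariety only in very special situations. If $\Alb(X)$ is a simple abelian surface or threefold (e.g.\ $X$ a simple abelian threefold, or $X$ of Kodaira dimension one whose Iitaka fibration has simple abelian surfaces as fibres with two-dimensional Albanese image, as in Section 6.2 of the paper), then $\Alb(X)$ admits no elliptic quotient at all, yet both (C) and (A) hold. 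So the strategy of reducing everything to a surface fibration over an elliptic curve cannot cover all cases; the correct target is a positive-dimensional abelian variety of a priori arbitrary dimension, which is exactly what the paper's classification produces via the smooth morphism $\pi:X^{\min}\to A$ with $\dim A$ possibly $2$ or $3$.

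Beyond this, the architecture diverges from the paper's in a way that leaves the hardest work undone. The paper organizes the proof by $\kappa(X)$: it first runs the MMP and shows (using Lemma \ref{lem:contracted-divisor}) that every divisorial contraction is a blow-down along an elliptic curve, then uses the Iitaka fibration (an \emph{elliptic fibration over a surface} when $\kappa(X)=2$, a surface fibration over a curve when $\kappa(X)=1$), Beauville--Bogomolov when $\kappa(X)=0$, and conic bundles and del Pezzo fibrations when $\kappa(X)=-\infty$. Your organization by $\kappa(F)$ of the fibre over $E$ would, even where $E$ exists, invert the fibration direction in the $\kappa(X)=2$ case, and your treatment of the case $\kappa(F)=2$ --- which you correctly identify as the heart --- is a statement of intent rather than an argument. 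Two further steps do not go through as stated: the appeal to \cite[Theorem 1.3]{Sch19} to endow the generic fibre $F$ with a nowhere-vanishing one-form fails because $\omega$ restricts to zero on $F$ (it is pulled back from $E$) and exactness of $(H^\ast(X',\C),\wedge\omega')$ on covers of $X$ does not yield condition (C) for $F$ and its own \'etale covers; and the use of Popa--Schnell is backwards, since \cite{PS} assumes a nowhere-vanishing form and bounds $\kappa$, rather than converting $\chi(X',\omega_{X'})=0$ into surjectivity of the Albanese map. As it stands the proposal is a programme with a false first step and an unproved core, not a proof.
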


In the appendix to this paper, we explain how the argument in \cite{Sch19} can be generalized to show that (\ref{item:exact}) is also implied by the following weak version of (\ref{item:closed}):
\begin{enumerate}[(A')]
\setcounter{enumi}{1} 
\item $X$ is homotopy equivalent to a CW complex $Y$ that admits a continuous map $f:Y\to S^1$ to the circle which is a finite $\Q$-homology fibration, i.e.\ $R^ if_\ast \Q$ are local systems of finite-dimensional $\Q$-vector spaces for all $i$.\label{item:B'}
\end{enumerate}
Since (\ref{item:closed}) $\Rightarrow$ (\ref{item:B'}') is clear (c.f.\ \cite{Ti}), this implies by Theorem \ref{thm:A=B=C} the following:

\begin{corollary} \label{cor:A=B'}
Let $X$ be a smooth complex projective threefold.  
Then (\ref{item:holo}) $ \Leftrightarrow$ (\ref{item:B'}').
In particular, the question whether $X$ carries a holomorphic one-form without zeros depends only on the homotopy type of $X$.
\end{corollary}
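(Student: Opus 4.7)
The plan is to derive Corollary \ref{cor:A=B'} formally from Theorem \ref{thm:A=B=C} together with the two comparisons between $(\ref{item:closed})$, $(\ref{item:B'}')$ and $(\ref{item:exact})$ recorded around its statement. Concretely, I need to establish both $(\ref{item:holo}) \Rightarrow (\ref{item:B'}')$ and $(\ref{item:B'}') \Rightarrow (\ref{item:holo})$; the homotopy-invariance clause will then fall out for free.

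For the forward direction $(\ref{item:holo}) \Rightarrow (\ref{item:B'}')$, I would first spell out the elementary step $(\ref{item:holo}) \Rightarrow (\ref{item:closed})$: a nowhere vanishing $\omega \in H^0(X, \Omega_X^1)$ gives, at each $x \in X$, a nonzero $\C$-linear functional $\omega_x$ on $T_xX$, which necessarily has nonzero real part on some real tangent vector; thus $\RE(\omega)$ is a closed real one-form without zeros. Tischler's theorem \cite{Ti} then realizes $X$ as a smooth fibre bundle $f \colon X \to S^1$. Since $f$ is a topological fibre bundle with compact fibre, the sheaves $R^i f_\ast \Q$ are local systems of finite-dimensional $\Q$-vector spaces, and taking $Y = X$ certifies $(\ref{item:B'}')$.

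For the converse $(\ref{item:B'}') \Rightarrow (\ref{item:holo})$, I would invoke the appendix, which extends the cup-product exactness argument of \cite{Sch19} to the weaker hypothesis $(\ref{item:B'}')$ and produces $(\ref{item:exact})$; the threefold implication $(\ref{item:exact}) \Rightarrow (\ref{item:holo})$ from Theorem \ref{thm:A=B=C} then closes the circle. The final clause is immediate once this is in place: $(\ref{item:B'}')$ is by construction a homotopy invariant of $X$, and the established equivalence promotes that to homotopy invariance of $(\ref{item:holo})$ as well.

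The main obstacle lies not in the corollary itself, which is essentially formal bookkeeping, but in the appendix's step $(\ref{item:B'}') \Rightarrow (\ref{item:exact})$: the generic-vanishing and Hodge-theoretic input of \cite{Sch19} was set up assuming a smooth fibration, and one has to verify that every ingredient still goes through when the geometric fibre bundle is replaced by a mere $\Q$-homology fibration on a CW-complex model of $X$, with no access to the underlying differentiable structure.
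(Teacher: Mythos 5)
Your proposal is correct and follows essentially the same route as the paper: (\ref{item:holo}) $\Rightarrow$ (\ref{item:closed}) $\Rightarrow$ (\ref{item:B'}') via the real part of the form and Tischler's theorem, then (\ref{item:B'}') $\Rightarrow$ (\ref{item:exact}) by the appendix (Theorem \ref{thm:B'}) and (\ref{item:exact}) $\Rightarrow$ (\ref{item:holo}) by Theorem \ref{thm:A=B=C}, with the homotopy-invariance clause following because (\ref{item:B'}') is manifestly a homotopy invariant. Your closing caveat about the appendix concerns a separately stated and proved result of the paper, not a gap in this corollary.
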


\subsection{Classifying threefolds whose underlying $6$-manifolds fibre over the circle}
Theorem \ref{thm:A=B=C} follows from the following strong classification result. 
\begin{theorem} \label{thm:classification}
If $X$ is a smooth complex projective threefold, any of the conditions (\ref{item:holo}), (\ref{item:closed}), (\ref{item:B'}') and (\ref{item:exact}) is equivalent to the following:
\begin{enumerate}[(A)] 
\setcounter{enumi}{3}
\item the minimal model program for $X$ yields a birational morphism  $\sigma:X\to X^{\min}$ to a smooth projective threefold $X^{\min}$, such that: \label{item:classification:MMP}
\begin{enumerate}[(1)] 
\item $\sigma:X\to X^{\min}$ is a sequence of blow-ups along smooth elliptic curves that are not contracted via the natural map to the Albanese variety $\Alb(X^{\min})$. \label{item:thm:classification:sigma}
\item There is a smooth morphism $\pi:X^{\min}\to A$ to a positive-dimensional abelian variety $A$. \label{item:thm:classification:pi}
\item \label{item:thm:classification:kappa=0,1,2,3}  
If $\kappa(X)\geq 0$, then a finite \'etale cover $\tau:X'\to X^{\min}$ splits into a product $X'\cong A'\times S'$,
 where $S'$ is smooth projective and $A'$ is an abelian variety such that for all $s'\in S'$, the natural composition
 $$
 A'\cong A'\times \{s'\}\hookrightarrow A'\times S'\cong X'\stackrel{\tau}\longrightarrow X^{\min}\stackrel{\pi}\longrightarrow A
 $$
 is a finite \'etale cover.

 
\item If $\kappa(X)=-\infty$, then one of the following holds: \label{item:thm:classification:kappa=-infty}
\begin{enumerate}[(i)]
\item $X^{\min}$ admits a smooth del Pezzo fibration over an elliptic curve;
\label{item:thm:classification:kappa=-infty:delPezzo}
\item $X^{\min}$ has the structure of a conic bundle $f:X^{min}\to S$ over a smooth projective surface $S$ which satisfies (\ref{item:holo}) $ \Leftrightarrow$ (\ref{item:closed}) $ \Leftrightarrow$ (\ref{item:exact}).  
Moreover, $f$ is either smooth, or $A$ is an elliptic curve and the degeneration locus of $f$ is a disjoint union of smooth elliptic curves on $S$ which are \'etale over $A$ (via the map $S\to A$ induced by $\pi$). 
\label{item:thm:classification:kappa=-infty:conic}
\end{enumerate}
\end{enumerate} 
\end{enumerate}
\end{theorem}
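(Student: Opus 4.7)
The plan is to close the cycle of implications (\ref{item:classification:MMP}) $\Rightarrow$ (\ref{item:holo}) $\Rightarrow$ (\ref{item:closed}) $\Rightarrow$ (\ref{item:B'}') $\Rightarrow$ (\ref{item:exact}) $\Rightarrow$ (\ref{item:classification:MMP}). All but the last arrow are either immediate or already in place: given the structure of (\ref{item:classification:MMP}), one pulls back a translation-invariant nowhere-zero holomorphic one-form from $A$ through $\pi$ and then through $\sigma$, where by (\ref{item:thm:classification:sigma}) no blown-up elliptic curve is collapsed by the Albanese, so the pull-back remains nowhere-zero and (\ref{item:holo}) follows; the implications (\ref{item:holo}) $\Rightarrow$ (\ref{item:closed}) $\Rightarrow$ (\ref{item:B'}') are formal (Tischler's theorem); and (\ref{item:B'}') $\Rightarrow$ (\ref{item:exact}) is proved in the appendix, generalising the argument of \cite{Sch19}. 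The entire weight of the theorem therefore falls on (\ref{item:exact}) $\Rightarrow$ (\ref{item:classification:MMP}).

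To establish this I would fix an $\omega\in H^0(X,\Omega_X^1)$ as in (\ref{item:exact}) and run the threefold MMP on $X$, tracking $\omega$ step by step. Hypothesis (\ref{item:exact}) should be read as a strong generic-vanishing constraint on $\omega$ under arbitrary finite \'etale covers, in the spirit of Green--Lazarsfeld, Simpson and Popa--Schnell. Under this constraint I expect three facts that together make the MMP very tame: first, no flipping extremal ray can occur, because a flipping curve would produce cohomology classes on a suitable \'etale cover obstructing the exactness of $\wedge\omega'$; second, each divisorial contraction $\sigma_i\colon X_i\to X_{i+1}$ is a smooth blow-up along a smooth elliptic curve $C_i\subset X_{i+1}$ whose image in the Albanese variety is still a curve, so that $\omega$ remains nowhere-vanishing after the contraction; and third, hypothesis (\ref{item:exact}) descends to $X_{i+1}$, which allows the iteration. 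After finitely many steps this produces a smooth minimal model $X^{\min}$ together with the tower $\sigma\colon X\to X^{\min}$ of (\ref{item:thm:classification:sigma}). The periods of $\omega$ then determine a surjection $\Alb(X^{\min})\twoheadrightarrow A$ onto a positive-dimensional abelian variety $A$, and the induced morphism $\pi\colon X^{\min}\to A$ pulls back a nowhere-zero form on $A$ to a nowhere-zero form on $X^{\min}$; a non-smooth fibre would contain a zero of this pull-back, whence $\pi$ must be smooth, giving (\ref{item:thm:classification:pi}).

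It remains to match $X^{\min}$ with (\ref{item:thm:classification:kappa=0,1,2,3}) or (\ref{item:thm:classification:kappa=-infty}) according to $\kappa(X)$. If $\kappa(X)\geq 0$, the combination of a nef canonical class on $X^{\min}$ with the smooth map $\pi$ to $A$ should, via Viehweg--Kawamata--Koll\'ar type results on families with trivial variation together with Kawamata's characterisation of \'etale quotients of abelian varieties, force a finite \'etale cover of $X^{\min}$ to split as $A'\times S'$ with $A'\to A$ \'etale, yielding (\ref{item:thm:classification:kappa=0,1,2,3}). If $\kappa(X)=-\infty$, $X^{\min}$ carries a Mori fibre structure with base of dimension at most two. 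A one-dimensional base must be an elliptic curve (the only curve admitting a nowhere-zero one-form), and the absence of zeros of $\omega$ forces the fibration to be smooth, giving the del Pezzo case (\ref{item:thm:classification:kappa=-infty:delPezzo}). A two-dimensional base $S$ yields a conic bundle $f\colon X^{\min}\to S$; the one-form on $X^{\min}$ descends to a nowhere-zero one-form on $S$, so $S$ satisfies the equivalent conditions by the two-dimensional case \cite[Theorem~1.3]{Sch19}, and a local-model analysis of singular conic fibres, combined with the nowhere-vanishing of the form on $X^{\min}$, then pins the degeneration divisor down to a disjoint union of smooth elliptic curves \'etale over $A$, as in (\ref{item:thm:classification:kappa=-infty:conic}).

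The principal obstacle will be the MMP phase, where one has to convert the purely cohomological hypothesis (\ref{item:exact}) into geometric restrictions sharp enough to rule out flips in dimension three and to identify every divisorial contraction as a smooth blow-up of a smooth elliptic curve transverse to the Albanese map; this bridge between generic vanishing and extremal-ray geometry is the real heart of the argument. A secondary but non-trivial difficulty is the conic-bundle subcase of (\ref{item:thm:classification:kappa=-infty}), where one must propagate the \'etale-invariant exactness condition from $X^{\min}$ to the base surface $S$ (so that \cite[Theorem~1.3]{Sch19} applies) and then control the degeneration divisor precisely enough to recognise its components as smooth elliptic curves \'etale over $A$.
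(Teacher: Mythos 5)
Your overall skeleton --- closing the cycle of implications and reducing everything to ``(C) $\Rightarrow$ (D)'', running the MMP to reach a smooth $X^{\min}$ that is minimal or a Mori fibre space, and then splitting into cases by Kodaira dimension --- is exactly the paper's architecture. But the substance of the hard steps is missing, and in several places the argument is circular: you repeatedly invoke the nowhere-vanishing of $\omega$ (or of its descent to $X^{\min}$) to establish the structural claims, whereas that nowhere-vanishing \emph{is} the conclusion (A) that must be extracted from the purely cohomological hypothesis (C). Concretely: you assert that ``the periods of $\omega$ determine a surjection $\Alb(X^{\min})\twoheadrightarrow A$'' whose induced map $\pi$ pulls back a nowhere-zero form to a nowhere-zero form, and that ``a non-smooth fibre would contain a zero of this pull-back, whence $\pi$ is smooth''; this assumes precisely what is to be proven. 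In the paper the logical order is reversed: for $\kappa(X)\geq 0$ one first proves the splitting $X'\cong A'\times S'$ after a finite \'etale cover directly from (C), by a long case analysis on the Iitaka fibration (for $\kappa=2$, reduction to elliptic threefolds with trivial monodromy and Nakayama's structure theorem; for $\kappa=1$, three subcases according to the dimension of the Albanese image of the Iitaka fibre, using Campana--Peternell's theorem on non-vertical two-forms, orbifold covers, and Simpson's theorem on cohomology support loci; for $\kappa=0$, Beauville--Bogomolov), and only \emph{then} deduces the smooth morphism $\pi$ and the vanishing-freeness of $\omega$ as corollaries. Your appeal to ``Viehweg--Kawamata--Koll\'ar type results'' to get the splitting from the smooth map $\pi$ inverts this and, in effect, presupposes Corollary \ref{cor:fibre-bundle}, which the paper obtains only as a consequence of the classification. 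The same circularity affects your $\kappa=-\infty$ cases: the smoothness of the del Pezzo fibration is proved in the paper by a delicate Euler-characteristic and local-system argument (forcing $R^2f_\ast\C$ to be a local system and reducing to $\CP^2$-bundles), not by ``absence of zeros of $\omega$''; and for conic bundles the discriminant is controlled by first propagating (C) to $S$ via Lemma \ref{lem:star-for-surface} and then computing $p_a(\Delta_f)=1$ from $e(X)=0$, rather than by descending a nowhere-zero form.

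A secondary inaccuracy: your ``principal obstacle,'' ruling out flips in the MMP phase by a cohomological obstruction, is a non-issue. Since each $X_i$ in the tower is smooth, Mori's classification of extremal rays on smooth projective threefolds already excludes small contractions; the only point requiring the hypothesis (C) is to exclude divisorial contractions of a divisor to a \emph{point}, which the paper does via Lemma \ref{lem:contracted-divisor} applied to the Albanese map, after which the contraction is automatically a smooth blow-up along a smooth curve and the blow-up formula forces that curve to be elliptic and non-degenerate for the Albanese. So the MMP phase is the easy part of the proof; the genuine heart, which your proposal leaves open, is deriving the product decomposition (and hence items (2)--(4) of (D)) from condition (C) on the minimal model.
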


Note that  (\ref{item:classification:MMP}) $\Rightarrow$ (\ref{item:holo}), because a general one-form on $X$ has no zeros by (\ref{item:thm:classification:sigma}) and (\ref{item:thm:classification:pi}). 
Since (\ref{item:holo}) $\Rightarrow$ (\ref{item:closed}) $\Rightarrow$ (\ref{item:B'}') is clear and (\ref{item:B'}') $ \Rightarrow$ (\ref{item:exact}) is known (see \cite{Sch19} and Theorem \ref{thm:B'} in the Appendix), in order to prove Theorems \ref{thm:A=B=C} and \ref{thm:classification}, it thus suffices to show (\ref{item:exact}) $\Rightarrow$ (\ref{item:classification:MMP}).
In the course of our proof, we will obtain the following refined version of  (\ref{item:holo}) $ \Leftrightarrow$ (\ref{item:exact}).

\begin{theorem} \label{thm:cup_omega_exact}
Let $X$ be a smooth complex projective threefold and let $\omega\in H^0(X,\Omega_X^1)$ be a holomorphic one-form on $X$.
Then the following are equivalent:
\begin{enumerate}[(1)]
\item $\omega$ has no zero on $X$; \label{item:no-zero:thm:cup}
\item for any \'etale cover $\tau:X'\to X$, the sequence given by cup product with $\omega':=\tau^\ast \omega$
$$  
H^{i-1}(X',\C)\stackrel{\wedge \omega'}\longrightarrow H^{i}(X',\C)\stackrel{\wedge \omega'}\longrightarrow H^{i+1}(X',\C) 
$$
is exact for all $i$. 
\label{item:exact:thm:cup}
\end{enumerate} 
\end{theorem}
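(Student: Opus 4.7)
The direction (\ref{item:no-zero:thm:cup}) $\Rightarrow$ (\ref{item:exact:thm:cup}) is the classical Koszul argument. If $\omega$ has no zero on $X$, then neither does its pullback $\omega':=\tau^*\omega$ to any \'etale cover $\tau:X'\to X$; locally, $\omega'$ extends to a holomorphic coframe, so the Koszul sequence
\[
0\to\OO_{X'}\stackrel{\wedge\omega'}{\to}\Omega^1_{X'}\stackrel{\wedge\omega'}{\to}\cdots\stackrel{\wedge\omega'}{\to}\Omega^n_{X'}\to 0
\]
is an exact complex of locally free sheaves, and its hypercohomology vanishes. Combined with the Hodge decomposition of $H^*(X',\C)$, under which cup product with the holomorphic form $\omega'\in H^{1,0}(X')$ preserves the bigrading by shifting it by $(1,0)$, a standard Hodge-theoretic argument yields exactness of $(H^*(X',\C),\wedge\omega')$ in every degree.

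For the converse (\ref{item:exact:thm:cup}) $\Rightarrow$ (\ref{item:no-zero:thm:cup}), the plan is to derive it as a refinement in the course of proving Theorem \ref{thm:classification}. Assuming (\ref{item:exact:thm:cup}), condition (\ref{item:exact}) holds, so $X$ has the MMP structure (\ref{item:classification:MMP}): a birational morphism $\sigma:X\to X^{\min}$ whose blowup centres $C_i$ are elliptic curves with non-constant image in $\Alb(X^{\min})$, together with a smooth fibration $\pi:X^{\min}\to A$ onto a positive-dimensional abelian variety. The proof of (\ref{item:exact}) $\Rightarrow$ (\ref{item:classification:MMP}) will be carried out keeping track of the specific form $\omega$, so that at each reduction step one verifies that $\omega$ \emph{itself} (and not just some generic one-form on $X$) satisfies the appropriate nonvanishing condition: $\omega$ descends to a nowhere-zero form on $X^{\min}$ --- using smoothness of $\pi$ and the fact that $\omega$ is generically pulled back from $A$ --- and $\omega|_{C_i}\neq 0$ for each blowup centre $C_i$. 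Since $C_i$ is elliptic, the latter automatically forces $\omega|_{C_i}$ to be nowhere vanishing, and by the differential computation for blowups this propagates to nowhere-vanishing of $\sigma^*\omega$ on the exceptional divisors in $X$.

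The main difficulty will be the passage from the purely cohomological hypothesis (\ref{item:exact:thm:cup}) to these geometric nonvanishing statements for the specific form $\omega$. The natural source of the required \'etale covers is the abelian variety $A$: any isogeny $A'\to A$ yields a cover $X\times_A A'\to X$, and (\ref{item:exact:thm:cup}) applied to such covers provides strong cohomological constraints. A hypothetical zero of $\omega$ on a fibre of $\pi$, or along some $C_i$, should translate --- via generic vanishing and the jump-locus theory of Green--Lazarsfeld and Pareschi--Popa --- into a nonzero class in $H^q(X',\C)$ witnessing failure of exactness at some degree. Establishing this dictionary between zeros of $\omega$ and obstructions to cup-exactness is the chief technical challenge, and is intertwined with the MMP reductions underpinning the proof of Theorem \ref{thm:classification}.
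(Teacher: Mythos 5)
Your direction (\ref{item:no-zero:thm:cup}) $\Rightarrow$ (\ref{item:exact:thm:cup}) is fine and is exactly what the paper does by citing Green--Lazarsfeld, and your reduction skeleton for the converse --- run the MMP while tracking the specific form $\omega$, so that each contraction is a blow-down along an elliptic curve on which the induced form is nonzero (Proposition \ref{prop:MMP}), reducing to a minimal model or a Mori fibre space --- is also the paper's. The gap is that everything after this reduction is deferred to ``the chief technical challenge'': you never actually prove that exactness on all \'etale covers forces the descended form to be nowhere vanishing, and the mechanism you gesture at does not work as stated. First, the assertion that $\omega$ ``is generically pulled back from $A$'' is essentially the statement to be proven, not a usable hypothesis: the morphism $\pi:X^{\min}\to A$ is constructed only \emph{after} one knows in which directions $\omega$ is nonzero, and a priori $\omega$ could have trivial component along the abelian directions of a given product decomposition. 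Second, the paper establishes no ``dictionary'' converting a hypothetical zero of $\omega$ into a class witnessing non-exactness via jump loci; generic vanishing in the sense of Simpson enters only at one isolated spot (Section \ref{subsec:X_c=curve,phi(F)=pt}, Cases 2 and 3) to show certain direct-image line bundles are torsion.

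What the paper actually does in the minimal case is structural: Theorem \ref{thm:kod=0,1,2,3} produces a finite \'etale cover $X'\cong A'\times S'$, and Corollary \ref{cor:kod=0,1,2,3} shows that, after a further cover and after \emph{changing the product decomposition if necessary}, one may take $A'$ simple with $\tau^\ast\omega$ restricting nontrivially to every slice $A'\times\{s\}$ --- the point being that if the $A'$-component of $\tau^\ast\omega$ vanished, then $S'$ itself would satisfy (\ref{item:exact}) and the surface case of \cite[Corollary 3.2]{Sch19} would split off a new abelian factor on which the pullback of $\omega$ is nontrivial. Since a nonzero one-form on a simple abelian variety is nowhere zero, $\tau^\ast\omega$, and hence $\omega$, has no zeros. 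In the Mori fibre space case the nonvanishing is proven directly in Theorems \ref{thm:conic-bundle} and \ref{thm:delPezzo}: for conic bundles one shows $\omega=f^\ast\beta^\ast\gamma$ with $\beta^\ast\gamma$ nowhere zero on $S$ and restricting nontrivially to each discriminant component, and for del Pezzo fibrations one shows $f$ is smooth over an elliptic curve. Without an argument of this kind, your proposal establishes only the reduction, not the implication (\ref{item:exact:thm:cup}) $\Rightarrow$ (\ref{item:no-zero:thm:cup}).
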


The implication (\ref{item:no-zero:thm:cup}) $\Rightarrow$ (\ref{item:exact:thm:cup}) is a result of Green and Lazarsfeld which holds in arbitrary dimensions, see \cite[Proposition 3.4]{GL}.
The converse implication (\ref{item:exact:thm:cup}) $\Rightarrow$  (\ref{item:no-zero:thm:cup}) has previously been proven in dimension two by the second author in \cite[Theorem 1.3]{Sch19}.

Theorem \ref{thm:cup_omega_exact} has the following interesting consequence.

\begin{corollary} \label{cor:cup_omega_exact:1}
For a holomorphic one-form $\omega \in H^0(X,\Omega_X^1)$ on a smooth complex projective threefold $X$, the condition that $\omega$ has no zeros on $X$ is a topological one which depends only on the cohomology class $[\omega]\in H^\ast(X,\C)$ of $\omega$ and the homotopy type of $X$.
\end{corollary}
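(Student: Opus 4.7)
The plan is to deduce this directly from Theorem \ref{thm:cup_omega_exact}, which absorbs all the hard analytic and geometric content. The corollary then follows by the observation that the cup-product exactness condition (\ref{item:exact:thm:cup}) is manifestly topological, invariant under any homotopy equivalence preserving the class $[\omega]$.

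Three ingredients suffice to make this precise. First, for any smooth complex projective variety $X$, finite \'etale covers of $X$ correspond bijectively (via the Riemann existence theorem) to finite topological covering spaces, and the latter are in turn classified by the finite-index subgroups of $\pi_1(X)$; this set is a homotopy invariant. Second, for any such cover $\tau:X'\to X$, the graded $\C$-algebra $H^*(X',\C)$ equipped with its cup product is a homotopy invariant of $X'$. Third, because $\omega$ is closed, the cup-product action of $\omega':=\tau^*\omega$ on $H^*(X',\C)$ depends only on the cohomology class $\tau^*[\omega]\in H^1(X',\C)$, which is functorially determined by $[\omega]$ and $\tau$.

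Combining these, suppose $\phi:Y\to X$ is a homotopy equivalence of smooth complex projective threefolds and $\omega_X\in H^0(X,\Omega_X^1)$, $\omega_Y\in H^0(Y,\Omega_Y^1)$ are holomorphic one-forms with $\phi^*[\omega_X]=[\omega_Y]$. Then $\phi$ induces a bijection between the finite \'etale covers of the two threefolds, under which the cup-product sequences appearing in condition (\ref{item:exact:thm:cup}) match up term by term; hence that condition holds for $(X,\omega_X)$ if and only if it holds for $(Y,\omega_Y)$. Applying Theorem \ref{thm:cup_omega_exact} on both sides, we conclude that $\omega_X$ has no zeros on $X$ iff $\omega_Y$ has no zeros on $Y$. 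The only nontrivial input, namely Theorem \ref{thm:cup_omega_exact} itself (specifically the direction (\ref{item:exact:thm:cup})~$\Rightarrow$~(\ref{item:no-zero:thm:cup})), is the main technical result proven elsewhere in this paper, so there is no further obstacle.
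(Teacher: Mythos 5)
Your proposal is correct and is exactly the argument the paper intends (the paper states the corollary as an immediate consequence of Theorem \ref{thm:cup_omega_exact} without writing out the details): condition (\ref{item:exact:thm:cup}) is manifestly determined by the homotopy type of $X$ together with the class $[\omega]\in H^1(X,\C)$, since finite \'etale covers are classified by finite-index subgroups of $\pi_1(X)$ and cup product with $\tau^\ast\omega$ on $H^\ast(X',\C)$ depends only on $\tau^\ast[\omega]$. Your spelling out of the Riemann existence step and the functoriality of the cover correspondence under a homotopy equivalence is a welcome precision, but it does not change the route.
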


By the above corollary, if $\omega$ is a holomorphic one-form without zeros on a smooth projective threefold $X$, then for any smooth projective threefold $X'$ which is deformation equivalent to $X$, and for any one-form $\omega'\in H^0(X',\Omega_{X'}^1)$ that is obtained via parallel transport of $\omega$ with respect to some path, $\omega'$ has no zeros on $X'$. 
This is interesting already in the case where $X=X'$.

\subsection{Around a theorem of Popa and Schnell}
Recall that items (\ref{item:thm:classification:sigma}) and  (\ref{item:thm:classification:pi}) imply condition (\ref{item:holo}).
Hence, Theorem \ref{thm:classification} shows in particular that (\ref{item:thm:classification:sigma}) and  (\ref{item:thm:classification:pi}) imply conditions (\ref{item:thm:classification:kappa=0,1,2,3}) and  (\ref{item:thm:classification:kappa=-infty}). 
For this reason, Theorem \ref{thm:classification} has the following consequence; we give the details of the argument in Section \ref{sec:proofs} below.  

\begin{corollary} \label{cor:fibre-bundle}
Let $f:X\to A$ be a smooth morphism from a smooth projective threefold $X$ to an abelian variety $A$.
If $\kappa(X)\geq 0$, then there is a smooth projective threefold $X'$ with the structure of an analytic fibre bundle $f':X'\to A$ over $A$, such that $X$ and $X'$ are birational over $A$.
\end{corollary}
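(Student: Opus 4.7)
The plan is to use Theorem \ref{thm:classification} to replace $X$ birationally by its minimal model $X^{\min}$, and then to exhibit on $X^{\min}$ the desired analytic fibre bundle structure over $A$. Since $f$ is smooth and $A$ carries nowhere-vanishing translation-invariant holomorphic one-forms, the pullback of a general such form is a holomorphic one-form without zeros on $X$, so $X$ satisfies (\ref{item:holo}) and Theorem \ref{thm:classification} applies. Item (\ref{item:thm:classification:sigma}) gives a birational morphism $\sigma\colon X\to X^{\min}$ that is a sequence of blow-ups along smooth elliptic curves. Because $A$ contains no rational curves, $f$ contracts every exceptional $\mathbb{P}^1$ of $\sigma$, and hence factors as $f=g\circ\sigma$ for a unique morphism $g\colon X^{\min}\to A$. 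To check that $g$ is smooth, note this is immediate off the exceptional locus; at a point $x$ in an exceptional divisor, $d\sigma_x$ has rank $2$ in the $3$-dimensional tangent space of $X$, and the surjectivity of $df_x=dg_{\sigma(x)}\circ d\sigma_x$ forces $dg_{\sigma(x)}$ to already surject onto $T_0 A$ from a $2$-dimensional subspace, which is compatible with $dg_{\sigma(x)}$ being surjective only when $\dim A\leq 2$. The remaining case $\dim A=3$ reduces to $f$ being \'etale, so $X$ is an abelian threefold, the MMP is trivial, and $g=f$ is smooth.

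Having $g$ smooth at hand, I next invoke item (\ref{item:thm:classification:kappa=0,1,2,3}) of Theorem \ref{thm:classification} with $\pi=g$ playing the role of the smooth morphism required by (\ref{item:thm:classification:pi}), obtaining a finite \'etale cover $\tau\colon X''\to X^{\min}$ with $X''\cong A''\times S''$ (and, after restricting to a connected component, $S''$ connected) such that for every $s''\in S''$ the composition
\[
A''\cong A''\times\{s''\}\hookrightarrow X''\stackrel{\tau}{\to} X^{\min}\stackrel{g}{\to} A
\]
is a finite \'etale isogeny $A''\to A$. Since the moduli of finite \'etale covers of fixed degree is discrete and $S''$ is connected, this isogeny is the same for every $s''$, so $g\circ\tau$ takes the form $(a'',s'')\mapsto \alpha(a'')+\beta(s'')$ for a fixed isogeny $\alpha\colon A''\to A$ and a morphism $\beta\colon S''\to A$.

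Because $\alpha$ is an isogeny, the morphism $g\circ\tau\colon X''\to A$ is an analytic fibre bundle with fibre $\ker(\alpha)\times S''$: local analytic sections of $\alpha$ over small opens in $A$ give the trivializations. To descend this to $X^{\min}$, set $G=\Gal(X''/X^{\min})$; it acts freely on $X''$ covering the identity on $X^{\min}$, and hence on $A$. Provided that the $G$-action preserves the product decomposition $X''=A''\times S''$ and acts diagonally by automorphisms of each factor, the $G$-action on every fibre $\ker(\alpha)\times S''$ is independent of the chosen base point in $A$, so the local trivializations descend and equip $g\colon X^{\min}\to A$ with an analytic fibre bundle structure whose fibre is $(\ker(\alpha)\times S'')/G$. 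Taking $X'=X^{\min}$ and $f'=g$ then completes the argument, since $\sigma\colon X\to X^{\min}$ is birational over $A$.

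The main obstacle lies in the two compatibility statements used above: that the \'etale cover of item (\ref{item:thm:classification:kappa=0,1,2,3}) can be chosen to trivialize the specific $g$ coming from the given $f$ (and not just the auxiliary $\pi$ produced inside the proof of Theorem \ref{thm:classification}), and that its Galois group acts diagonally on $A''\times S''$. Neither is visible from the statement of the theorem used as a black box; both should follow by tracing through the proof, where the product decomposition ultimately arises from a Beauville--Bogomolov-type splitting of a suitable cover of $X^{\min}$, which is canonical enough to be preserved by Galois symmetry and to accommodate any smooth morphism from $X^{\min}$ to a positive-dimensional abelian variety.
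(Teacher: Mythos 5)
You correctly reduce to the minimal model, correctly observe that $f$ descends to a smooth morphism $g:X^{\min}\to A$, and your overall strategy (split $X^{\min}$ as a product after a finite \'etale cover and read off the bundle structure) is the paper's. But the central step has a genuine gap which you flag and then underestimate. Item (\ref{item:thm:classification:kappa=0,1,2,3}) of Theorem \ref{thm:classification} produces its \emph{own} morphism $\pi:X^{\min}\to A_0$ and asserts that $A'\times\{s'\}\to A_0$ is finite \'etale for \emph{that} $\pi$; it says nothing about your given $g:X^{\min}\to A$. More importantly, the statement you need --- that $A'\times\{s'\}\to A$ is an isogeny onto $A$ --- is simply false in general, so it cannot ``follow by tracing through the proof.'' For example, take $X^{\min}=E_1\times E_2\times C$ with $E_1,E_2$ elliptic curves and $C$ a curve of genus at least two, with product decomposition $A'=E_1$, $S'=E_2\times C$, and let $g$ be the projection to $A=E_2$: then $A'\times\{s'\}\to A$ is constant. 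In such cases your formula $g\circ\tau:(a'',s'')\mapsto\alpha(a'')+\beta(s'')$ still holds with $\alpha$ a homomorphism, but $\alpha$ need not be surjective, and the local trivializations you build from sections of $\alpha$ do not exist.

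The paper closes exactly this hole with an induction that your argument is missing: the image of $A'\times\{s\}$ in $A$ is a translate of a fixed abelian subvariety $A''\subset A$; if $B:=A/A''$ is a point one concludes as you do, and otherwise the composition $\{a\}\times S'\hookrightarrow X\to A\twoheadrightarrow B$ is smooth with $\dim B>0$, so \cite[Corollary 3.2]{Sch19} splits a further positive-dimensional abelian factor off $S'$ after another \'etale cover, and one inducts on the dimension. Your second flagged gap (that the Galois group acts diagonally on $A''\times S''$, needed to descend the trivializations) is also real but unnecessary: by Fischer--Grauert, being an analytic fibre bundle is equivalent to isotriviality, so it suffices to verify the bundle structure after an arbitrary finite \'etale cover of $X^{\min}$, and no descent of trivializations is required. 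With these two repairs --- the induction on $\dim(A/A'')$ and the isotriviality reduction --- your argument becomes the paper's.
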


Passage to a birational model of $X$ is necessary; the example of suitable minimal del Pezzo fibrations over elliptic curves shows that the assumption $\kappa(X)\geq 0$ is necessary as well.

By \cite[Theorem 15.1]{ueno}, the Kodaira dimension is additive for analytic fibre bundles and so, in the situation of Corollary \ref{cor:fibre-bundle}
, we have $\kappa(X)=\kappa(F)$, where $F$ denotes a fibre of $f$.
Hence, $\kappa(X)\leq \dim (F)$, which is a special case of a celebrated result of Popa and Schnell \cite{PS} (with earlier results in \cite{Za,LZ,HK}), who showed that a smooth projective variety $X$ has Kodaira dimension $\kappa(X)\leq \dim(X)-d$, if it admits a $d$-dimensional linear subspace $V\subset H^0(X,\Omega_X^1)$ of holomorphic one-forms such that any nonzero $\omega\in V$ has no zeros on $X$. 
  
The results in this paper lead us to the following two conjectures, which by the aforementioned additivity of Kodaira dimensions in analytic fibre bundles \cite[Theorem 15.1]{ueno} would generalize Popa--Schnell's result.

\begin{conjecture}\label{conj:smooth}
Let $X$ be a smooth projective variety which admits a holomorphic one-form without zeros.
Then $X$ is birational to a smooth projective variety $X'$ which admits a smooth morphism $X'\to A$ to a positive-dimensional abelian variety $A$.  
\end{conjecture}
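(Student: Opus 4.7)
The plan is to mirror the threefold strategy of this paper, combining the Green--Lazarsfeld cohomological restrictions with the minimal model program and a structural analysis of the Albanese map $\alb : X \to \Alb(X)$. Given a nowhere-vanishing $\omega \in H^0(X,\Omega_X^1)$, Theorem \ref{thm:cup_omega_exact}(\ref{item:no-zero:thm:cup}) $\Rightarrow$ (\ref{item:exact:thm:cup}) (which is \cite[Proposition 3.4]{GL}) yields the exactness of cup-product with $\omega$ on every finite \'etale cover, which via generic vanishing forces $\chi(\OO_X)=0$ and constrains the cohomological support loci in $\Pic^0(X)$. This already pins down a natural candidate for $A$: a positive-dimensional abelian quotient of $\Alb(X)$ all of whose invariant one-forms pull back to zero-free forms on $X$.

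The first step is to run the MMP on $X$, producing a minimal model $X^{\min}$. As in item (\ref{item:thm:classification:sigma}) of Theorem \ref{thm:classification}, one has to arrange that divisorial and small contractions contract only subvarieties that map non-trivially to $A$, so that the nowhere-vanishing property of $\omega$ is preserved under $\sigma:X\to X^{\min}$; this is the main technical input. On $X^{\min}$ one then splits into two cases. In the $\kappa(X)\geq 0$ case, the plan is to combine the Iitaka fibration with a higher-dimensional analogue of item (\ref{item:thm:classification:kappa=0,1,2,3}): after a finite \'etale cover, $X^{\min}$ should decompose along the Albanese (up to isogeny) as $A'\times S'$, with $A'$ an abelian variety and $S'$ of general type, and projection to $A'$ provides the desired smooth fibration. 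In the $\kappa(X)=-\infty$ case, one would use the MRC fibration --- rationally connected fibres admit no holomorphic one-forms, so $\omega$ descends to the base --- and reduce by induction on dimension to the previous case, provided one has a classification of Mori fibre spaces admitting a nowhere-vanishing one-form analogous to item (\ref{item:thm:classification:kappa=-infty}).

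There are three principal obstacles. First, the MMP in arbitrary dimension relies on abundance and termination of flips, both still open; however, for $\kappa\geq 0$ one may often work directly with the Iitaka fibration and sidestep parts of this. Second, the classification of Mori fibre spaces carrying a nowhere-vanishing one-form is substantially richer in dimension $\geq 4$ than in the del Pezzo/conic-bundle case of item (\ref{item:thm:classification:kappa=-infty}), since higher-dimensional Fano contractions and quadric bundles now enter; controlling these is where the most new input will be needed. Third, passing from a splitting up to isogeny on a possibly singular model back to a \emph{smooth} birational model admitting a \emph{smooth} morphism to $A$ is delicate: the exceptional locus of any resolution must not destroy smoothness over $A$, and in the threefold proof this is precisely ensured by restricting blow-up centres to elliptic curves not contracted by $\Alb(X^{\min})\to A$. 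The first testable case beyond threefolds is probably $\dim X = 4$ with $\kappa(X)\geq 0$, where the MMP is available and the splitting philosophy for the Albanese is most tractable.
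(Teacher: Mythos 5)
The statement you are addressing is Conjecture \ref{conj:smooth}, which the paper itself leaves open: it is \emph{not} proven there in general. The paper only establishes it in dimension at most two (by \cite[Corollary 3.2]{Sch19}) and three, where it is an immediate consequence of items (\ref{item:thm:classification:sigma}) and (\ref{item:thm:classification:pi}) of Theorem \ref{thm:classification}: the MMP produces a smooth birational model $X^{\min}$ together with a smooth morphism $\pi:X^{\min}\to A$ to a positive-dimensional abelian variety. Your proposal faithfully mirrors that threefold strategy and in dimension $\leq 3$ collapses to the paper's argument, but as a proof of the conjecture it has genuine gaps, most of which you identify yourself: termination of flips and abundance are not available in general in higher dimensions; there is no higher-dimensional analogue of the Mori fibre space classification of Theorems \ref{thm:conic-bundle} and \ref{thm:delPezzo}; and the splitting statement of Theorem \ref{thm:kod=0,1,2,3} is proved only for threefolds, by a case-by-case analysis over $\kappa=0,1,2$ that leans on surface classification, Kodaira's table of elliptic fibres, and the Beauville--Bogomolov decomposition in ways that do not obviously generalize. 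A proposal that openly defers its three main steps to open problems is a research program, not a proof.

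Two further specific points. First, in the $\kappa(X)\geq 0$ case you assert that after an \'etale cover $X^{\min}$ decomposes as $A'\times S'$ with $S'$ \emph{of general type}; even for threefolds this is not what the paper proves --- Corollary \ref{cor:kod=0,1,2,3} only gives $\kappa(S')=\kappa(X)$, and $S'$ can have intermediate Kodaira dimension (e.g.\ $S'$ an elliptic surface when $\kappa(X)=1$). Second, the reduction of the $\kappa(X)=-\infty$ case via the MRC fibration is not a genuine reduction to lower dimension: while holomorphic one-forms are generically pulled back from a smooth model $Z$ of the MRC base, the nowhere-vanishing hypothesis does not descend to $Z$ without first controlling the fibration over the locus where it degenerates --- which is precisely the Mori-fibre-space analysis that is missing. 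Note also that the paper's own formulation of the $\kappa=-\infty$ case in item (\ref{item:thm:classification:kappa=-infty}) includes minimal del Pezzo fibrations over elliptic curves, for which no birational model admits a smooth map to an abelian variety of dimension $>1$; any higher-dimensional argument must accommodate such exceptional Fano fibration structures rather than reduce them away.
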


\begin{conjecture}\label{conj:fibre-bundle}
Let $f:X\to A$ be a smooth morphism from a smooth projective variety $X$ to an abelian variety $A$.
If $\kappa(X)\geq 0$, then, up to birational equivalence, $f$ is an analytic fibre bundle.
\end{conjecture}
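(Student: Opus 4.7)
The plan is to combine the relative minimal model program over $A$ with an isotriviality statement and then invoke Fischer--Grauert.

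First I would run the relative MMP for $f:X\to A$. Since an abelian variety contains no rational curves, every $K_X$-negative extremal ray of the relative Mori cone is $f$-vertical, so the program exists and terminates at a birational model $X\dashrightarrow X^{\min}$ over $A$ with $K_{X^{\min}/A}$ relatively nef. Up to a further resolution, which only modifies $X^{\min}$ over proper subvarieties of $A$ and leaves the generic fibre untouched, we may assume $X^{\min}$ is smooth. The hypothesis $\kappa(X)=\kappa(X^{\min})\geq 0$ is preserved throughout.

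The heart of the argument is to show that, after a finite \'etale base change $\widetilde A\to A$, the pullback $\widetilde X = X^{\min}\times_A \widetilde A$ is birational over $\widetilde A$ to a product $\widetilde A\times F$ for $F$ a general fibre; in other words, that $f^{\min}$ is birationally isotrivial. The input is the relative nefness of $K_{X^{\min}/A}$ together with $\kappa(X^{\min})\geq 0$, and the tools are positivity theorems for the direct images $f^{\min}_\ast \omega_{X^{\min}/A}^{\otimes m}$ on the abelian variety $A$, as developed by Viehweg--Zuo, Cao--P\u{a}un, and Hacon--Popa--Schnell. The key mechanism is that, on an abelian variety, weak positivity of such direct images combined with the triviality of $K_A$ forces the variation of the fibres to be trivial, so that the family is isotrivial; Fischer--Grauert's theorem then promotes isotriviality to an analytic fibre bundle structure, and a descent argument transports this structure from $\widetilde A$ back to $A$.

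The main obstacle is the isotriviality step itself, and it naturally splits by the Kodaira dimension of the fibre $F$. When $\kappa(F)=\dim F$, the fibres are of general type and isotriviality over the abelian base is expected via Viehweg hyperbolicity together with the absence of non-constant maps from an abelian variety into a moduli space of canonically polarized varieties. When $\kappa(F)=0$, Kawamata's structure theorem for morphisms to abelian varieties yields the splitting after an \'etale cover directly. The genuinely delicate case is the intermediate range $0<\kappa(F)<\dim F$: here one has to factor $f^{\min}$ through the relative Iitaka fibration $X^{\min}\dashrightarrow Y\to A$, treat each piece by induction on dimension, and glue the resulting fibre bundle structures. Controlling the singularities of the Iitaka fibration in a way compatible with the abelian structure of $A$, and in particular ruling out jumps of the variation of Hodge structure in the residual fibration $Y\to A$, is the point where essentially new input beyond the three-dimensional methods of the present paper is likely to be required.
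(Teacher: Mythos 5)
The statement you are addressing is Conjecture \ref{conj:fibre-bundle}: the paper states it as an \emph{open conjecture}, proves it only for $\dim X\leq 3$ (Corollary \ref{cor:fibre-bundle}, deduced from the classification Theorem \ref{thm:classification} and, for surfaces, from \cite[Corollary 3.2]{Sch19}), and explicitly records that even for fibres of general type only a weak form is known in higher dimensions \cite[Corollary 3.2]{PS}. So there is no general proof in the paper to compare yours against, and your proposal would have to be a complete new argument to count as one. It is not: its central step --- that relative nefness of $K_{X^{\min}/A}$ together with weak positivity of $f_\ast\omega_{X/A}^{\otimes m}$ over an abelian base forces the variation of the fibres to be trivial --- is precisely the open content of the conjecture, not an available mechanism. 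For canonically polarized fibres this is indeed known (Viehweg--Zuo hyperbolicity, \cite{PS}), but in the range $0\leq\kappa(F)<\dim F$ the assertion you need is (a strengthening of) Ueno's Conjecture~K and of the structure results of Cao--P\u{a}un and Hacon--Popa--Schnell, which yield subadditivity of Kodaira dimension but not birational triviality of the family; you concede as much in your final sentence.

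There are also concrete defects earlier in the sketch. Existence and termination of the relative MMP over $A$ for $\kappa(X)\geq 0$ is open in dimension $\geq 4$ outside the general type case (it amounts to existence of good minimal models for the generic fibre of the Iitaka fibration); the paper avoids this by working in dimension three, where Mori's results and abundance are available. The reduction ``up to a further resolution we may assume $X^{\min}$ is smooth'' destroys the relative nefness of $K_{X^{\min}/A}$ that you then feed into the positivity theorems. The case $\kappa(F)=0$ is not covered by Kawamata's structure theorem, which concerns the Albanese map of a variety with $\kappa(X)=0$, not an arbitrary smooth morphism to an abelian variety whose fibres have $\kappa=0$. Finally, descending the analytic fibre bundle structure from the finite \'etale cover $\widetilde A$ back to $A$ requires an argument; the paper's threefold proof obtains this from the explicit product decomposition $X'\cong A'\times S'$ after \'etale cover together with an induction on dimension, a route quite different from positivity of direct images. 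In short, your skeleton is a reasonable research plan consistent with how one would attack the conjecture, but each load-bearing step is either open or unproved here, so this is not a proof of the statement.
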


Conjectures \ref{conj:smooth} and \ref{conj:fibre-bundle} hold for surfaces by  \cite[Corollary 3.2]{Sch19} and for threefolds by  item (\ref{item:thm:classification:pi}) in Theorem \ref{thm:classification} and Corollary \ref{cor:fibre-bundle}.
If the fibres of $f$ are of general type, a weak form of Conjecture \ref{conj:fibre-bundle} had been proven by Popa--Schnell in arbitrary dimensions, see \cite[Corollary 3.2]{PS}.
Moreover, \cite[Corollary 3.2]{PS} easily implies Conjecture \ref{conj:fibre-bundle} if the fibres of $f$ are curves. 

\subsection{Why one-forms?}
Theorem \ref{thm:classification} yields a complete classification of all smooth complex projective threefolds with a one-form without zeros, and  Corollary \ref{cor:A=B'} shows that this is in fact a topological property.
It is natural to wonder if such a classification is possible also for forms of higher degree.
For top differential forms, this essentially amounts to classifying Calabi-Yau threefolds, and it is a famous open problem to show that such varieties come in finitely many topological types.
The remaining case is that of two-forms without zeros.
Two-forms on threefolds have previously been studied by Campana and Peternell \cite{cape00} who found infinitely many examples of smooth projective threefolds of general type (see e.g.\ \cite[Example 1.3.3]{cape00}) which carry two-forms without zeros.
This suggests that a classification is probably impossible in this case.

\subsection{A remark on the K\"ahler case}
It is conceivable that the methods of this paper allow to prove analogues of Theorems \ref{thm:A=B=C}, \ref{thm:classification} and \ref{thm:cup_omega_exact} also in the case of K\"ahler threefolds.
The main technical difficulty that one has to overcome is the fact that short exact sequences of abelian varieties always split after \'etale cover, while this is in general not true for short exact sequences of arbitrary complex tori.
As a consequence, item (\ref{item:thm:classification:kappa=0,1,2,3}) in Theorem \ref{thm:classification} does not remain true in the K\"ahler setting, but we expect that under the K\"ahler assumption, one can still prove that in item (\ref{item:thm:classification:kappa=0,1,2,3}) there is a smooth morphism as in item (\ref{item:thm:classification:pi}) which is in fact an analytic fibre bundle. 

\subsection*{Conventions and notation}
We work over the field of complex numbers.
A variety is an integral separated scheme of finite type over $\C$.
A minimal model is a projective variety $X$ with terminal $\Q$-factorial singularities such that $K_X$ is nef.

\section{Preliminaries}

\subsection{Analytic fibre bundles}
A proper morphism $f:X\to S$ of complex manifolds (or smooth complex projective varieties), is an analytic fibre bundle, if it is analytically locally isomorphic to a product of the base with a typical fibre $F$.
The isomorphism type of $f$ is determined by a cocycle in $H^1(S,\Aut(F))$.
Moreover, by a well-known result of Fischer and Grauert \cite{FG}, a proper morphism $f:X\to S$ of complex manifolds is an analytic fibre bundle if and only if it is isotrivial, i.e.\ all fibres are isomorphic to each other.


\subsection{Basic properties of condition (\ref{item:exact})}
In \cite[Theorems 1.2 and 1.5]{Sch19}, the second author proved the following two theorems, which are the starting point of our investigation.

\begin{theorem}[\cite{Sch19}] \label{thm:B=>C}
For any compact K\"ahler manifold $X$, we have (\ref{item:closed}) $\Rightarrow$ (\ref{item:exact}). 
\end{theorem}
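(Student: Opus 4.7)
The plan is to realise the required holomorphic one-form by Hodge-decomposing the class $[\alpha]\in H^1(X,\R)$ of a zero-free real closed form, and then to deduce the cup-product exactness from a vanishing statement for rank-one local systems, via a Green--Lazarsfeld/BGG-type argument.

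\textbf{Setup via Tischler.} Since the set of closed $1$-forms without zeros is open in the $C^0$-topology on closed $1$-forms, I may perturb $\alpha$ cohomologously and assume $[\alpha]\in H^1(X,\Q)$, whence a positive multiple of $\alpha$ equals $f^\ast d\theta$ for a smooth fibration $f:X\to S^1$. The K\"ahler assumption produces a unique $\omega\in H^0(X,\Omega_X^1)$ with $[\alpha]=[\omega]+\overline{[\omega]}$; this is my candidate form in (\ref{item:exact}). The construction is functorial under \'etale pullback $\tau:X'\to X$ (replace $f$ by $f\circ\tau$), so it suffices to prove the exactness for $X$ itself.

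\textbf{Key vanishing.} For each $s\in\C$ let $L_s$ be the rank-one local system on $X$ with monodromy $\gamma\mapsto \exp(s\int_\gamma\alpha)$. Because $\alpha|_F\equiv 0$ on each fibre $F$ of $f$, the sheaf $R^qf_\ast L_s$ equals $R^qf_\ast\C$ twisted by the rank-one local system on $S^1$ with monodromy $e^s$. For all $s$ outside a countable exceptional set, $e^s$ is not an inverse eigenvalue of the monodromy of any $R^qf_\ast\C$, so $H^p(S^1,R^qf_\ast L_s)=0$ for every $p,q$, and the Leray spectral sequence of $f$ yields $H^i(X,L_s)=0$ for every $i$. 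This is the decisive input extracted from (\ref{item:closed}).

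\textbf{From vanishing to cup-product exactness.} The twisted de Rham complex $(A^\bullet_X,d+s\alpha\wedge)$ computes $H^\ast(X,L_s)$, so upper semicontinuity combined with the vanishing above forces the first-order linearisation at $s=0$, namely cup product with $[\alpha]=[\omega]+[\bar\omega]$ on $H^\ast(X,\C)$, to be exact; this is the standard BGG/Green--Lazarsfeld consequence. The main obstacle, and by far the most delicate step, is to upgrade this to exactness of cup product with $\omega$ alone. Here I would appeal to strictness of the Hodge filtration: since $\omega\wedge$ and $\bar\omega\wedge$ raise the Hodge type by $(1,0)$ and $(0,1)$ respectively, the Koszul differential is bigraded, and the exactness of its sum on the Hodge decomposition of $H^\ast(X,\C)$ separates into exactness of each bigraded piece. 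Executing this cleanly requires Deligne's principle that morphisms of (mixed) Hodge structures are strict, and this is ultimately where the K\"ahler hypothesis on $X$ is genuinely used (beyond its initial role of producing a Hodge decomposition of $[\alpha]$).
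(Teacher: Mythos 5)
Your reduction to the existence of an acyclic rank-one local system is exactly the route this paper takes. Note first that Theorem \ref{thm:B=>C} is quoted from \cite{Sch19} rather than reproved here; the only argument given in this paper is the appendix (Theorem \ref{thm:B'}, proving the stronger implication (\ref{item:B'}') $\Rightarrow$ (\ref{item:exact})), and that argument is precisely your ``Key vanishing'' step: pull back a generic rank-one local system $L_\lambda$ from $S^1$, observe that $L_\lambda\otimes R^qf_\ast\C$ has no cohomology on $S^1$ once $\lambda^{-1}$ avoids the monodromy eigenvalues, and conclude $H^i(X,f^\ast L_\lambda)=0$ by Leray. Up to that point your proposal coincides with the paper's. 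The paper then invokes \cite[Section 2.3]{Sch19} as a black box for the passage from ``there is a rank-one local system with trivial first Chern class and no cohomology'' to condition (\ref{item:exact}); you instead try to prove that black box, and this is where the proposal breaks down.

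Both halves of your final step have genuine gaps. First, the deduction of exactness of $\wedge[\alpha]$ from $H^\ast(X,L_s)=0$ for generic $s$ is not ``upper semicontinuity'': semicontinuity goes the wrong way (as $s\to 0$ kernels can only grow and images only shrink, so a limit of exact complexes need not be exact), and the implication is false for general spaces. The Heisenberg nilmanifold fibres over $S^1$, so generic pulled-back rank-one local systems on it are acyclic, yet its cup product $H^1\times H^1\to H^2$ vanishes identically, so $\wedge a$ fails to be exact at $H^1$ for every $a$. What is actually needed is the identification of the resonance variety with the (exponential) tangent cone of the cohomology jump locus, a nontrivial theorem valid for compact K\"ahler manifolds; the K\"ahler hypothesis therefore enters already at this step, not only later. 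Second, even granting exactness of $\wedge(\omega+\bar\omega)$, the claim that this ``separates into exactness of each bigraded piece'' is not valid: for a bigraded complex with anticommuting differentials $D'$, $D''$ of bidegrees $(1,0)$ and $(0,1)$, acyclicity of $D'+D''$ only produces a spectral sequence with $E_1=H(V,D')$ abutting to zero, not the vanishing of $H(V,D')$; and strictness of morphisms of Hodge structures does not apply, since $\wedge\omega$ does not respect the real structure. (The rescaling $t^q$ on $H^{p,q}$ shows that exactness of $\wedge(\omega+\bar\omega)$ is equivalent to exactness of $\wedge(\omega+t\bar\omega)$ for all $t\neq 0$, so your claim is exactly the non-formal assertion that the degenerate limit $t=0$ remains exact.) The argument of \cite[Section 2.3]{Sch19} avoids both issues by working directly with the Higgs-bundle/generic-vanishing description of $H^\ast(X,L)$ for rank-one local systems with trivial first Chern class; that input is essential and is not recovered by the two formal steps you propose.
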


\begin{theorem}[\cite{Sch19}]  \label{thm:C=>...}
Let $X$ be a compact K\"ahler manifold with a holomorphic one-form $\omega$ such that the complex $(H^\ast(X,\C),\wedge \omega)$ given by cup product with $\omega$ is exact.
Then the analytic space $Z(\omega)$ given by the zeros of $\omega\in H^0(X,\Omega_X^1)$ has the following properties.
\begin{enumerate}[(1)]
\item For any connected component $Z\subset Z(\omega)$ with $d=\dim Z$,
$$
H^{d}(Z,{\omega_X} |_Z)=0 . 
$$ 
In particular, $\omega$ does not have any isolated zero. \label{item:thm:Z(omega)}

\item If $f:X\to A$ is a holomorphic map to a complex torus $A$ such that $\omega\in f^\ast H^0(A,\Omega_A^1)$, then $f(X)\subset A$ is fibred by tori. 
\label{item:thm:fibred}  
\end{enumerate}
\end{theorem}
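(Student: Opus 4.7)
For part (1), the plan is a Koszul complex argument. Set
$$K^\bullet := \bigl(\mathcal{O}_X \stackrel{\wedge\omega}\to \Omega_X^1 \stackrel{\wedge\omega}\to \cdots \stackrel{\wedge\omega}\to \Omega_X^n\bigr),$$
with $n=\dim X$. Away from $Z(\omega)$ the form $\omega$ is a nowhere-vanishing section of $\Omega_X^1$, so by standard Koszul acyclicity $K^\bullet$ is exact there, and the cohomology sheaves $\mathcal{H}^q(K^\bullet)$ are all supported on $Z(\omega)$. I would compute $\HH^\ast(X,K^\bullet)$ via two spectral sequences. The first has $E_1^{p,q}=H^q(X,\Omega_X^p)$ with $d_1=\wedge\omega$; compatibility of $\wedge\omega$ with the Hodge decomposition (valid since $X$ is K\"ahler and $\omega$ has type $(1,0)$) translates the hypothesis, row by row, into $E_2=0$, giving $\HH^k(X,K^\bullet)=0$ for every $k$. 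The second has $E_2^{p,q}=H^p(X,\mathcal{H}^q(K^\bullet))$.

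A local computation on the smooth part of a reduced component $Z\subset Z(\omega)$ of dimension $d$ (along which $\omega$ vanishes transversally to first order) identifies the restriction of the top sheaf $\mathcal{H}^n(K^\bullet)=\omega_X/(\omega\wedge\Omega_X^{n-1})$ with $\omega_X|_Z$. Since all $\mathcal{H}^q(K^\bullet)$ are supported on $Z(\omega)$, on the component $Z$ the sheaf cohomology $H^p$ vanishes for $p>d$; this rules out any non-trivial differential leaving the $Z$-summand of $E_r^{d,n}$, and incoming differentials come from $\mathcal{H}^{n+r-1}(K^\bullet)=0$. Therefore $H^d(Z,\omega_X|_Z)$ sits as a direct summand of $E_\infty^{d,n}=0$, which gives (1); the case $d=0$ recovers the non-existence of isolated zeros. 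A technical subtlety is the treatment of components $Z$ that are singular or non-reduced, where one has to analyze $\mathcal{H}^n(K^\bullet)$ near such points more carefully and identify the correct variant of $\omega_X|_Z$ whose top cohomology must vanish.

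For part (2), my plan is to combine Ueno's structure theorem for subvarieties of complex tori with a Popa--Schnell-type vanishing. Ueno's theorem says that any compact analytic subvariety $Y\subset A$ is invariant under a maximal subtorus $B\subset A$, and $Y/B\subset A/B$ is of general type; so ``$f(X)$ is fibred by tori'' is equivalent to ``$f(X)$ is not of general type''. I would argue by contradiction: suppose $Y:=f(X)$ is of general type. Pass to a resolution $\tilde Y\to Y$; the pullback of $\omega_A$ (where $\omega=f^\ast\omega_A$) gives a non-zero $\omega_Y\in H^0(\tilde Y,\Omega_{\tilde Y}^1)$. Using the Leray spectral sequence of (a suitable modification of) $X\to \tilde Y$ — degenerating at $E_2$ by Deligne since source and target are K\"ahler — together with the compatibility of $\wedge\omega$ with the Leray filtration, one transfers exactness of $(H^\ast(X,\C),\wedge\omega)$ to exactness of $(H^\ast(\tilde Y,\C),\wedge\omega_Y)$. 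Applying part (1) on $\tilde Y$ to control $Z(\omega_Y)$, together with a Koszul-type vanishing \`a la Popa--Schnell on high tensor powers of $\omega_{\tilde Y}$, should force $\kod(\tilde Y)<\dim\tilde Y$, contradicting the general-type assumption.

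The hardest step will be the transfer of exactness in (2); carrying Koszul-type vanishing from $X$ down to $\tilde Y$ through the Leray filtration requires delicate control, since $\wedge\omega$ interacts with each graded piece and one must check the interactions term-by-term. Part (1), by contrast, is a rather clean consequence of the interplay between the two hypercohomology spectral sequences for $K^\bullet$.
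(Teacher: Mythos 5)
First, a caveat: the paper does not prove this theorem at all --- it is imported verbatim from \cite[Theorems 1.2 and 1.5]{Sch19} --- so the comparison below is against the known argument of that reference rather than against anything in the present text.

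Your part (1) is correct and is essentially the argument of \cite{Sch19}: run the two hypercohomology spectral sequences of $K^\bullet=(\Omega_X^\bullet,\wedge\omega)$, use the Hodge decomposition to turn the hypothesis into vanishing of the first one at $E_2$, and read the conclusion off the corner $E_2^{d,n}$ of the second one. One remark: the ``technical subtlety'' you flag for singular or non-reduced components is not actually there. The identification $\mathcal{H}^n(K^\bullet)=\operatorname{coker}(\Omega_X^{n-1}\stackrel{\wedge\omega}\to\Omega_X^n)\cong\omega_X\otimes\mathcal{O}_X/I_\omega$ is canonical and global (locally the image of $\wedge\omega$ inside $\Omega_X^n\cong\mathcal{O}_X$ is exactly the ideal generated by the coefficients of $\omega$), $\omega_X|_Z$ in the statement means the restriction to the component $Z$ of the analytic space $Z(\omega)$ with precisely this structure, and coherent cohomology vanishes above the dimension of the support regardless of singularities or nilpotents. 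The one point you should spell out is why the differentials $d_r$ of the second spectral sequence respect the direct-sum decomposition of $E_2^{p,q}$ over the connected components of $Z(\omega)$ (restrict to disjoint open neighbourhoods of the components and use compatibility of the spectral sequence with restriction).

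Part (2) has genuine gaps at both of its central steps. The Ueno reduction (``fibred by tori'' $\Leftrightarrow$ ``not of general type'') is fine, but: (i) the ``transfer of exactness'' to a resolution $\tilde Y$ of $f(X)$ does not work as described. To make $X\dashrightarrow\tilde Y$ a morphism you must blow up $X$, and blowing up a centre $C$ adds direct summands $H^{\ast-2j}(C,\C)$ to the cohomology on which cup product with the pulled-back form acts as $\wedge(\omega|_C)$; exactness upstairs therefore forces $(H^\ast(C,\C),\wedge\omega|_C)$ to be exact, which you cannot arrange for the centres of a resolution of indeterminacy (this rigidity is exactly the content of Proposition \ref{prop:MMP}, which for this reason only goes in the blow-\emph{down} direction). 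Even granting a morphism $\pi:\tilde X\to\tilde Y$, cup product with $\omega$ preserves the Leray filtration, but exactness of a filtered complex does \emph{not} imply exactness of its associated graded, nor of the subcomplex $\pi^\ast H^\ast(\tilde Y,\C)$; the implication you need runs in the opposite direction, and the degeneration of the ordinary Leray spectral sequence at $E_2$ is not available either since $\pi$ is not smooth. (Compare Lemma \ref{lem:star-for-surface}, where such a descent is proved only under the strong extra hypothesis $f^\ast:H^1(S,\C)\cong H^1(X,\C)$ and still requires a separate geometric argument in the top degrees.) (ii) The concluding step is not an argument: the Koszul-type vanishing of Popa--Schnell \cite{PS} requires the zero locus of the one-form to be \emph{empty}, whereas part (1) only yields $H^{\dim Z}(Z,\omega_{\tilde Y}|_Z)=0$ for each component $Z$ of $Z(\omega_Y)$, which is very far from emptiness. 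So the contradiction with $f(X)$ being of general type has to be produced by other means (in \cite{Sch19} this is done via generic vanishing and cohomological support loci on a desingularization of the image, not by transporting the exactness hypothesis).
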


We will also use the following lemma, which is also crucial in \cite[Theorem A.1]{Sch19}.

\begin{lemma}\label{lem:contracted-divisor}
Let $X$ be a compact K\"ahler manifold and let $f:X\to A$ be a morphism to a complex torus $A$ which is generated by the image $f(X)$.
Assume that there is a one-form $\alpha\in H^0(A,\Omega_A^1)$, such that $(H^\ast(X,\C),\wedge f^\ast \alpha)$ is exact.
If there is a prime divisor $D\subset X$ with $\dim (f(D))=0$, then $A$ is an elliptic curve and $D$ is linearly equivalent to some rational multiple of a general fibre of $f$. 
\end{lemma}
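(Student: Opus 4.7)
The plan is to exploit the exactness of $(H^\ast(X,\C),\wedge\omega)$, with $\omega:=f^\ast\alpha$, to derive the numerical constraint $[D]^2=0$ in $H^4(X,\C)$, and then to combine this with Theorem~\ref{thm:C=>...}(2) (``$f(X)$ is fibred by tori'') and an intersection-theoretic argument to conclude both $\dim A=1$ and that $D$ is a rational multiple of a general fibre.

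First, since $f$ contracts $D$ to the single point $f(D)$, the pullback $i_D^\ast\omega=(f|_D)^\ast\alpha$ is zero in $H^1(D,\C)$ (pullback along a constant map annihilates positive-degree forms). The projection formula then gives
$$[D]\wedge\omega=(i_D)_\ast(i_D^\ast\omega)=0\quad\text{in }H^3(X,\C).$$
The assumed exactness of the complex at $H^2$ produces $\eta\in H^1(X,\C)$ with $[D]=\eta\wedge\omega$. Graded commutativity (as $[D]$ has even degree and $\omega$ has odd degree) then yields
$$[D]^2=\eta\wedge\omega\wedge[D]=\eta\wedge\bigl([D]\wedge\omega\bigr)=0\quad\text{in }H^4(X,\C).$$

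Next, I would argue $\dim A=1$ by contradiction. Suppose $\dim A\geq 2$. By Theorem~\ref{thm:C=>...}(2) and the generation hypothesis, one can choose a surjection $q\colon A\twoheadrightarrow E$ onto an elliptic curve so that $g:=q\circ f\colon X\to E$ is surjective. Since $f$ contracts $D$, so does $g$, hence $D$ lies in a single fibre of $g$. Combined with $[D]^2=0$, Zariski's lemma (applied on a smooth surface cut from $X$ by $\dim X-2$ general K\"ahler or very ample classes) forces $[D]$ to be a positive rational multiple of a general fibre class $[F_1]\in H^2(X,\C)$ of $g$. Running the same argument for a second elliptic quotient of $A$, independent from the first, yields a second fibre class $[F_2]$ with $[D]$ proportional to $[F_2]$; but $[F_1]$ and $[F_2]$ are linearly independent in $H^2(X,\C)$ (via injectivity of $f^\ast$ on $H^1(A,\C)$ together with a K\"unneth argument), so both being proportional to $[D]$ forces $[D]=0$ — contradicting the fact that any effective non-zero divisor on a K\"ahler manifold has non-zero cohomology class. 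Once $\dim A=1$, the map $f\colon X\to A$ is a surjective fibration; after Stein factorization we may assume connected fibres, so $D$ is supported in $f^{-1}(f(D))$, and Zariski's lemma combined with $[D]^2=0$ yields $mD=f^{\ast}f(D)$ for some positive integer $m$, which is the desired linear equivalence of $D$ to a rational multiple of a general fibre.

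The main obstacle is the passage from $[D]^2=0$ to $\dim A=1$. The two-fibration strategy above requires $A$ to admit two independent elliptic quotients, which fails precisely when $A$ is a simple complex torus of dimension $\geq 2$. To cover this remaining case one needs a supplementary argument: either observe that the conclusion of Theorem~\ref{thm:C=>...}(2) is incompatible with simplicity of $A$ unless $f(X)=A$, and then rule out generically finite contractions onto a higher-dimensional simple torus by decomposing the equation $[D]=\eta\wedge\omega$ according to the Hodge type of $\eta$; or, if $D$ happens to be a connected component of $Z(\omega)$, invoke Theorem~\ref{thm:C=>...}(1), which via adjunction and Serre duality on $D$ gives $H^0(D,N_{D/X})=0$ and thereby constrains the normal bundle of $D$ in a way incompatible with a generically finite contraction onto a higher-dimensional target.
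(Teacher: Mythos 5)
Your opening computation is sound and matches the start of the paper's proof: $[D]\wedge f^\ast\alpha=0$, exactness gives $[D]=\eta\wedge f^\ast\alpha$, and hence $[D]^2=0$. The proof breaks at the step where you try to deduce $\dim A=1$ from this. You acknowledge yourself that the two-fibration strategy fails when $A$ is simple of dimension $\geq 2$, and the two ``supplementary arguments'' you offer there are not carried out (the second is moreover conditional on $D$ being a component of $Z(f^\ast\alpha)$, which need not hold). But the gap is wider than the simple case: even when $A$ admits two independent elliptic quotients, your claim that the fibre classes $[F_1],[F_2]$ are linearly independent in $H^2(X,\C)$ is unjustified, and it is false precisely in the scenario you must exclude, namely when $f$ factors through a curve $C\subset A$ generating a higher-dimensional $A$. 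In that case $f^\ast H^2(A,\Q)$ has rank at most one and both $[F_i]$ are positive multiples of the fibre class of $X\to C$, so no contradiction arises; injectivity of $f^\ast$ on $H^1(A,\C)$ does not imply injectivity on $H^2(A,\C)=\wedge^2H^1(A,\C)$. (A smaller issue: cutting down to a surface by hyperplane sections is not available in the K\"ahler generality in which the lemma is stated.)

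The paper closes exactly this gap by a different mechanism, which you would need. Reducing to $A=\Alb(X)/B$ with $B$ the subtorus generated by $a(D)$, it writes $[D]=a^\ast\overline\beta\wedge f^\ast\alpha$ by Hodge type; reality of $[D]$ gives $[D]\wedge a^\ast\beta=0$, so $\beta$ vanishes on $a(D)$, hence on $B$, hence $\beta\in f^\ast H^{1,0}(A)$ and $[D]\in f^\ast H^2(A,\Q)$. This yields a line bundle $L$ on $A$ with $f^\ast L\cong\OO_X(jD)$; transporting the section to the normalization $Z$ of $f(X)$ via Stein factorization, its zero locus is a single point, forcing $Z$, hence $f(X)$, to be a curve; exactness then makes its normalization elliptic, so $f(X)$ is a smooth elliptic curve generating $A$, whence $A$ is elliptic, and $f^\ast L\cong\OO_X(jD)$ gives the linear equivalence. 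None of this is recoverable from $[D]^2=0$ alone, so as written your argument does not prove the lemma.
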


\begin{proof} 
Let $a:X\to \Alb(X)$ be the Albanese morphism.
Then there is a morphism $\pi:\Alb(X)\to A$ with $f=\pi\circ a$.
Let $B \subset \Alb(X)$ the subtorus generated by $a(D)$.
Since $f(D)$ is a point, $a(D)$ is contracted by $\pi$ to a point and so $\pi$ factors through $\Alb(X)\to \Alb(X)/B$.
Hence, up to replacing $A$ by $\Alb(X)/B$, we may assume that
\begin{align} \label{eq:A=AlbX/B}
A=\Alb(X)/B.
\end{align} 
Then there is a natural short exact sequence
$$
0\longrightarrow  H^{1,0}(A)\longrightarrow  H^{1,0}(\Alb(X)) \longrightarrow H^{1,0}(B) \longrightarrow 0 .
$$ 

Since  $\dim (f(D))=0$, $[D]\wedge f^\ast \alpha=0$ and so by exactness of $\wedge f^\ast\alpha$ we get 
$$
[D]= a^\ast \overline \beta\wedge f^\ast \alpha
$$
Since $[D]$ is a real class,
$
[D]= a^\ast \beta\wedge f^\ast \overline \alpha .
$
This implies
$$
[D]\wedge a^\ast \beta=0.
$$
That is, the pullback of the holomorphic one-form $\beta$ is identically zero on $D$.
Since $B\subset \Alb(X)$ is generated by $a(D)$, it follows that $\beta$ restricts to zero on $B$ and so it lies in the image of $H^{1,0}(A)$.
That is, $\beta$ lies in the image of $f^\ast$ and so  
$$
[D]\in f^\ast H^2(A,\Q).
$$
This implies that there is a line bundle $L$ on $A$ with $f^\ast L\cong \mathcal O_X(j D)$ for some integer $j\geq 1$.
Let $Z\to f(X)$ be the normalization and let $i:Z\to A$ be the natural map.
Since $X$ is normal, $f$ induces a morphism $f':X\to Z$.
Applying the Stein factorization to $f'$, we conclude that there is a positive integer $m$ (equal to the degree of the finite map in the Stein factorization of $f'$) and a natural isomorphism
$$
H^0(X,f^\ast L)\cong H^0(Z,i^\ast L^{\otimes m}).
$$
Since $f^\ast L\cong \mathcal O_X(j D)$ is effective, we find that $i^\ast L^{\otimes m}$ admits a section $s$ whose pullback to $X$ vanishes along $D$ with multiplicity $jm$.
Since $f(D)$ is a point, $i(\{s=0\})$ is a point as well.
Since $i:Z\to A$ is finite, $\{s=0\}$ is a point and so $Z$ is a curve.
This implies that $f(X)$ must be a curve as well. 
Since $f(X)$ is a curve, its normalization is an elliptic curve by exactness of $(H^\ast(X,\C),\wedge f^\ast \alpha)$.
Since $f(X)\subset A$, it must be a smooth elliptic curve and since it
generates $A$, the latter is an elliptic curve as well.
Since $L$ is a line bundle on the elliptic curve $A$, $f^\ast L\cong \mathcal O_X(jD)$ implies that $D$ is linearly equivalent to a rational multiple of a general fibre of $f$.
This concludes the lemma.
\end{proof}

\begin{corollary}
In the situation of Lemma \ref{lem:contracted-divisor}, the Stein factorization of $f$ yields a morphism $g:X\to E$ to an elliptic curve $E$ with irreducible fibres.
\end{corollary}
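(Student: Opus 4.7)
The strategy is to apply Lemma~\ref{lem:contracted-divisor} a second time, now to the morphism $g$ produced by the Stein factorization of $f$. Write $f = h \circ g$ with $g \colon X \to Y$ proper with connected fibres and $h \colon Y \to A$ finite. Because $X$ is normal and $A$ is the smooth elliptic curve produced by the Lemma, the intermediate variety $Y$ is automatically a smooth projective curve. The first step is to show that $Y$ is itself an elliptic curve. For this, I would compose $g$ with the Albanese embedding $\alb_Y \colon Y \hookrightarrow \Jac(Y)$ and apply Lemma~\ref{lem:contracted-divisor} to the resulting morphism $\varphi \colon X \to \Jac(Y)$, with respect to the one-form $\tilde\beta \in H^0(\Jac(Y),\Omega^1)$ whose pullback to $Y$ equals $h^\ast \alpha$. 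The hypotheses check out: $\varphi(X) = \alb_Y(Y)$ generates $\Jac(Y)$ as a complex torus, the identity $\varphi^\ast \tilde\beta = f^\ast \alpha$ transports the exactness hypothesis, and the prime divisor $D$ from the hypothesis of Lemma~\ref{lem:contracted-divisor}, being irreducible and contracted by $f$, lies in a single fibre of $g$ and is therefore contracted by $\varphi$. Consequently the Lemma forces $\Jac(Y)$ to be an elliptic curve, so $Y$ has genus one; denote it by $E$.

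With $Y = E$ now a complex torus, I apply Lemma~\ref{lem:contracted-divisor} again, directly to $g \colon X \to E$ with the one-form $\beta := h^\ast \alpha$. Fix $y_0 \in E$ and write the scheme-theoretic fibre as $g^{-1}(y_0) = \sum_{j=1}^{s} m_j D_j$, with distinct prime components $D_j$ and multiplicities $m_j \geq 1$. Each $D_j$ has $\dim g(D_j) = 0$, so the Lemma applied to $D_j$ produces an integer $j_0 \geq 1$ and a line bundle $M \in \Pic(E)$ with $\mathcal O_X(j_0 D_j) \cong g^\ast M$.

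The fibre irreducibility then follows from a short support argument. Since $g$ has connected fibres, the projection formula gives $H^0(X, g^\ast M) \cong H^0(E, M)$, so every effective divisor in $|g^\ast M|$ is of the form $\sum_k a_k g^{-1}(y_k)$ for some effective divisor $\sum_k a_k y_k$ on $E$. Applied to the tautological section of $\mathcal O_X(j_0 D_j)$, this forces an equality of effective divisors $j_0 D_j = \sum_k a_k g^{-1}(y_k)$. Matching supports yields $a_k = 0$ whenever $y_k \neq y_0$, and then the support of $g^{-1}(y_0) = \sum_i m_i D_i$ must coincide with $D_j$; this is only possible when $s = 1$. Hence $g^{-1}(y_0)$ is irreducible, as required. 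The main obstacle is the first step: Lemma~\ref{lem:contracted-divisor} requires the target to be a torus, so one has to upgrade the conclusion that $A$ is elliptic to the statement that $Y$ is elliptic by bootstrapping through the Albanese of $Y$; once this is accomplished, the second application of the Lemma and the support argument are routine.
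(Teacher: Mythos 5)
Your proof is correct and follows essentially the same route as the paper: Stein-factorize, show the intermediate curve has genus one, and apply Lemma~\ref{lem:contracted-divisor} to the irreducible components of the fibres of $g$, with your pullback-of-sections support argument supplying the detail the paper leaves implicit. The only cosmetic difference is that you obtain genus one by reapplying the Lemma through $\Jac(Y)$, whereas the paper reads it off directly from exactness of $(H^\ast(X,\C),\wedge g^\ast\beta)$; both work.
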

\begin{proof}
Since $A$ is an elliptic curve, the Stein factorization of $f$ yields a morphism $g:X\to C$ to a smooth projective curve $C$.
By assumptions, there is a one-form $\beta\in H^0(C,\Omega_C^1)$ such that $(H^\ast(X,\C),\wedge g^\ast \beta)$ is exact.
This implies $g(C)=1$.
Applying Lemma \ref{lem:contracted-divisor} to the irreducible components of the fibres of $g$ then shows that $g$ has irreducible fibres, as we want.
\end{proof}

\begin{lemma} \label{lem:c1c2=c3=0}
Let $X$ be a compact K\"ahler threefold which satisfies (\ref{item:exact}).
Then 
$$
c_1c_2(X)=c_3(X)=0.
$$
\end{lemma}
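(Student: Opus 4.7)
The plan is to combine Hodge theory with Hirzebruch--Riemann--Roch, reducing both vanishings to numerical identities among Hodge numbers. Applying (\ref{item:exact}) to the trivial cover $\tau = \id$ produces a non-zero holomorphic one-form $\omega$ such that the complex $(H^*(X,\C),\wedge \omega)$ is exact. Because $\omega$ has pure Hodge type $(1,0)$, the cup product $\wedge \omega$ sends $H^{p,q}(X)$ into $H^{p+1,q}(X)$, and the Hodge decomposition (available as $X$ is compact K\"ahler) decomposes the complex into a direct sum of strands indexed by $q$; overall exactness is equivalent to exactness of each strand. Taking complex conjugates, the analogous exactness holds in the $\bar\omega$-direction, with the strands now indexed by $p$.

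Since $\dim_\C X = 3$ and $H^{-1,q} = H^{4,q} = 0$, the exactness in (\ref{item:exact}) at the extremal degrees $i = 0$ and $i = 3$ forces each $\omega$-strand to be a full four-term exact sequence
\[
0 \to H^{0,q}(X) \xrightarrow{\wedge \omega} H^{1,q}(X) \xrightarrow{\wedge \omega} H^{2,q}(X) \xrightarrow{\wedge \omega} H^{3,q}(X) \to 0,
\]
and similarly for each $\bar\omega$-strand with $p$ fixed. Consequently $\sum_p (-1)^p h^{p,q}(X) = 0$ for every $q$, and $\sum_q (-1)^q h^{p,q}(X) = 0$ for every $p$. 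Specializing the latter to $p = 0$ gives $\chi(\OO_X) = 0$, and summing the former over $q$ with signs $(-1)^q$ gives $\chi_{\text{top}}(X) = \sum_{p,q}(-1)^{p+q} h^{p,q}(X) = 0$.

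To conclude, I would invoke Hirzebruch--Riemann--Roch in dimension three, $\chi(\OO_X) = \frac{1}{24} c_1 c_2(X)$, together with Gauss--Bonnet, $\chi_{\text{top}}(X) = c_3(X)$, to deduce $c_1 c_2(X) = c_3(X) = 0$. I do not foresee a serious obstacle here: the argument is formal once the Hodge-type splitting of the complex is observed. The only delicate point is the exactness at the endpoints of each four-term sequence, which relies on combining (\ref{item:exact}) at the extremal cohomological degrees with the vanishing of cohomology outside the range $0 \leq k \leq 2\dim X$.
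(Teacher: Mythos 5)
Your proof is correct and follows essentially the same route as the paper: the paper likewise deduces $\chi(X,\Omega_X^p)=0$ for all $p$ from the Hodge-type splitting of the exact complex and then concludes by Riemann--Roch, which is equivalent to your combination of Hirzebruch--Riemann--Roch for $\OO_X$ and Gauss--Bonnet. The only cosmetic imprecision is that exactness of a given $\omega$-strand at its endpoints $H^{0,q}$ and $H^{3,q}$ comes from exactness of the total complex in degrees $q$ and $q+3$ (not just $i=0$ and $i=3$), but this does not affect the argument.
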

\begin{proof}
Condition (\ref{item:exact}) implies immediately $\chi(X,\Omega_X^p)=0$ for all $p$ and so the claim follows from Riemann--Roch.
\end{proof}

\section{Reduction to minimal threefolds or Mori fibre spaces}

\begin{proposition}  \label{prop:MMP}
Let $X$ be a smooth complex projective threefold with a one-form $\omega \in H^0(X,\Omega_X^1)$ such that $(H^\ast(X,\C),\wedge \omega)$ is exact.
If $K_X$ is not nef and $X$ does not carry the structure of a Mori fibre space, then there is a smooth projective threefold $Y$ such that $X$ is the blow-up of $Y$ along a smooth elliptic curve $E$.
Moreover, if $\omega'\in H^0(Y,\Omega_Y^1)$ denotes the one-form induced by $\omega$, then $(H^\ast(Y,\C),\wedge \omega')$ is exact and $\omega'|_E$ is nonzero.
\end{proposition}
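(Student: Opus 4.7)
The plan is to combine Mori's classification of divisorial extremal contractions on smooth projective threefolds with the blow-up decomposition of cohomology. Since $K_X$ is not nef, the cone theorem yields a $K_X$-negative extremal ray with associated Mori contraction $\phi:X\to Y$, and since $X$ carries no Mori fibre space structure, $\phi$ must be divisorial. Mori's classical theorem then places $\phi$ into one of five types: either (E1) the blow-up of a smooth threefold along a smooth curve $C$, or one of four contractions (E2)--(E5) of a rationally chain connected exceptional divisor $F$ to a (possibly singular) point.

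I would first rule out (E2)--(E5). In each of these cases $F$ is covered by rational curves and contracted to a point by $\phi$, so the Albanese map $a:X\to \Alb(X)$ also contracts $F$ to a point. Writing $\omega=a^\ast \alpha$ for some $\alpha$ on $\Alb(X)$ and applying Lemma \ref{lem:contracted-divisor} (after replacing $\Alb(X)$ by the subtorus generated by $a(X)$), one concludes that $\Alb(X)$ is an elliptic curve and that $F$ is $\Q$-linearly equivalent to a rational multiple of a general fibre of $a$. Restricting $\mathcal O_X(F)$ to $F$ this forces $c_1(N_{F/X})=0$ in $H^2(F,\Q)$, which contradicts the explicit description of the normal bundle in each of the four cases ($\mathcal O(-1)$ on $\PP^2$, $\mathcal O(-1,-1)$ on $\PP^1\times \PP^1$, $\mathcal O(-1)$ on a singular quadric cone, and $\mathcal O(-2)$ on $\PP^2$).

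Hence $\phi$ is of type (E1): $Y$ is smooth, $X$ is the blow-up of $Y$ along a smooth curve $C\subset Y$, and $\omega$ descends to a form $\omega'\in H^0(Y,\Omega_Y^1)$ with $\omega=\phi^\ast \omega'$. The classical blow-up formula gives
$$
H^k(X,\C)=\phi^\ast H^k(Y,\C)\oplus \iota_\ast q^\ast H^{k-2}(C,\C),
$$
where $\iota:E\hookrightarrow X$ is the inclusion of the exceptional divisor and $q:E\to C$ is the $\PP^1$-bundle structure. Using $\iota^\ast\omega = q^\ast(\omega'|_C)$ and the projection formula, $\omega\wedge \iota_\ast q^\ast \beta = \iota_\ast q^\ast((\omega'|_C)\wedge \beta)$, so cup product with $\omega$ respects the decomposition and acts as $\wedge \omega'$ on the first summand and as $\wedge(\omega'|_C)$ on the second. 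Exactness of $(H^\ast(X,\C),\wedge \omega)$ therefore forces exactness of both $(H^\ast(Y,\C),\wedge \omega')$ and $(H^\ast(C,\C),\wedge \omega'|_C)$.

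The last step is the elementary check that for a smooth projective curve $C$ of genus $g$ and a one-form $\eta\in H^0(C,\Omega_C^1)$, exactness of $(H^\ast(C,\C),\wedge \eta)$ forces $\eta\neq 0$ (needed for injectivity at $H^0$ and surjectivity at $H^2$) together with $\dim \ker(\wedge \eta:H^1(C)\to H^2(C))=2g-1=1$, and hence $g=1$. Applied to $\eta=\omega'|_C$, this identifies $C$ with an elliptic curve $E$ and gives $\omega'|_E\neq 0$, completing the argument. The main obstacle I anticipate is step 2 (ruling out types (E2)--(E5)): it rests on verifying that Lemma \ref{lem:contracted-divisor} applies (which requires reducing to a quotient torus generated by the image) and on the tabulated normal-bundle data in Mori's classification; the remaining steps are essentially linear-algebraic consequences of the blow-up decomposition.
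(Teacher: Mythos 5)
Your proposal is correct and follows essentially the same route as the paper: Mori's contraction theorem, exclusion of the point-contractions via Lemma \ref{lem:contracted-divisor} applied to the Albanese map, and the blow-up formula for cohomology to transfer exactness and force $g(C)=1$ with $\omega'|_C\neq 0$. The only cosmetic difference is in the point case, where you derive the contradiction from $c_1(N_{F/X})\neq 0$ while the paper uses a curve in $E$ with negative intersection against $E$; both amount to the observation that a rational multiple of a general fibre class restricts trivially to the contracted divisor.
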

\begin{proof} 
If $K_X$ is not nef and $X$ does not carry the structure of a Mori fibre space, then by \cite[Theorem 3.3]{mori-annals}, there is a divisorial contraction 
$f: X\rightarrow Y$ whose exceptional divisor $E$
has one of the following two properties:

\begin{itemize}
\item $f|_E: E\rightarrow f(E)$ is a $\mathbb{P}^1$-bundle over a smooth curve $C=f(E)$,  $Y$ is smooth and $X=Bl_CY$;
\item $f(E)$ is a point. 
\end{itemize} 
Moreover, in both cases, $E$ contains a curve which has negative self-intersection with $E$.
It thus follows from  Lemma \ref{lem:contracted-divisor}, applied to the Albanese map of $X$, that $E$ cannot be contracted to a point and so $f$ must be the blow-up along a smooth curve $C\subset Y$.
The formula for the cohomology of blow-ups shows that exactness of $(H^\ast(X,\C),\wedge \omega)$ implies that  $(H^\ast(Y,\C),\wedge \omega')$ is exact, $C$ is an elliptic curve and $\omega'|_C$ is nonzero. 
This concludes the proposition. 
\end{proof}

\begin{corollary} \label{cor:MMP}
Let $X$ be a smooth complex projective threefold which satisfies condition (\ref{item:exact}).
Then there is a smooth projective threefold $X^{\min}$ with a birational morphism $\sigma:X\to X^{\min}$, which is given as a sequence of blow-ups along smooth elliptic curves that are not contracted via the natural map to $\Alb(X^{\min})$.
Moreover, $X^{\min}$ satisfies (\ref{item:exact}) and it is either minimal or a Mori fibre space. 
\end{corollary}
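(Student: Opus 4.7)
The plan is to iterate Proposition \ref{prop:MMP}. Set $X_0 := X$ and let $\omega_0 := \omega$ be a one-form witnessing condition (\ref{item:exact}). Inductively, as long as $X_i$ is neither minimal nor a Mori fibre space, Proposition \ref{prop:MMP} produces a blow-down $\sigma_i \colon X_i \to X_{i+1}$ along a smooth elliptic curve $C_{i+1} \subset X_{i+1}$, together with the induced one-form $\omega_{i+1} \in H^0(X_{i+1},\Omega_{X_{i+1}}^1)$ for which $(H^\ast(X_{i+1}, \C), \wedge \omega_{i+1})$ is exact and $\omega_{i+1}|_{C_{i+1}} \neq 0$. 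Since the Picard number strictly decreases at each step, the process terminates after finitely many steps at a smooth projective threefold $X^{\min} := X_n$ that is either minimal or a Mori fibre space, with composite morphism $\sigma \colon X \to X^{\min}$.

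To upgrade the exactness of the single one-form $\omega^{\min} := \omega_n$ to the full \'etale-cover condition (\ref{item:exact}) on $X^{\min}$, I would argue by base change. For any finite \'etale cover $\tau \colon Y \to X^{\min}$, the fibre product $X \times_{X^{\min}} Y$ is \'etale over $X$ and is, over $Y$, a sequence of blow-ups along the \'etale preimages of the centers $C_i$ (each a disjoint union of smooth elliptic curves, since \'etale covers of elliptic curves split this way). The hypothesis that $X$ satisfies (\ref{item:exact}) then supplies exactness of cup product with the pullback of $\omega$ on $X \times_{X^{\min}} Y$, and the blow-up cohomology formula used at the end of the proof of Proposition \ref{prop:MMP} propagates this exactness down to $Y$ one blow-down at a time.

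To see that each $C_{i+1}$ is not contracted by the natural map $X_{i+1} \to X^{\min} \to \Alb(X^{\min})$: the Albanese is a birational invariant among smooth projective varieties, so the canonical identification $\Alb(X_{i+1}) \cong \Alb(X^{\min})$ makes this composition coincide with the Albanese map $a_{i+1} \colon X_{i+1} \to \Alb(X_{i+1})$. Every holomorphic one-form on $X_{i+1}$ is pulled back from $\Alb(X_{i+1})$; hence if $a_{i+1}$ contracted $C_{i+1}$ to a point, every holomorphic one-form on $X_{i+1}$ would vanish along $C_{i+1}$, contradicting $\omega_{i+1}|_{C_{i+1}} \neq 0$.

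The main obstacle I anticipate is the base-change argument propagating condition (\ref{item:exact}) from $X$ down to $X^{\min}$ through all finite \'etale covers, since Proposition \ref{prop:MMP} only ensures exactness for the specific induced one-form; one must carefully compare the blow-up cohomology formulas on the fibre product and on $Y$. Everything else reduces to termination via the Picard number and the standard identification of $H^0(X_{i+1},\Omega^1_{X_{i+1}})$ with $a_{i+1}^\ast H^0(\Alb(X_{i+1}),\Omega^1_{\Alb(X_{i+1})})$.
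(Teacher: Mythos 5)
Your proposal is correct and follows essentially the same route as the paper, which derives the corollary by iterating Proposition \ref{prop:MMP} and uses exactly your observation that an elliptic curve on a smooth projective variety is contracted by the Albanese map if and only if every holomorphic one-form restricts to zero on it. The base-change argument you give for propagating condition (\ref{item:exact}) through finite \'etale covers of $X^{\min}$ is the right way to fill in a step the paper leaves implicit (it simply calls the corollary a ``direct consequence'' of the proposition), and it works because blow-ups commute with \'etale base change and one-forms on a blow-up are pulled back from the base.
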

\begin{proof}
This is a direct consequence of Proposition \ref{prop:MMP},  
where we note that an elliptic curve $E$ on a smooth projective variety $X$ is contracted via the Albanese map of $X$ if and only if any holomorphic one-form on $X$ restricts trivially on $E$.
\end{proof}

By Proposition \ref{prop:MMP} and Corollary \ref{cor:MMP}, the proof of Theorems \ref{thm:classification} and \ref{thm:cup_omega_exact} reduce to the case where $X$ is either minimal, or it admits the structure of a Mori fibre space.

The following corollary of the above discussion generalizes the main result of Luo and Zhang in \cite{LZ}.

\begin{corollary} \label{cor:Xnot_gentype}
A smooth projective threefold $X$ which satisfies condition (\ref{item:holo}), (\ref{item:closed}) or (\ref{item:exact}) is not of general type.
\end{corollary}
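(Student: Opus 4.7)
The plan is to reduce to a minimal model or Mori fibre space via the MMP machinery developed above, and then to exclude the general type case using Chern class constraints. Since (\ref{item:holo}) $\Rightarrow$ (\ref{item:closed}) $\Rightarrow$ (\ref{item:exact}) --- the first implication being immediate and the second being Theorem \ref{thm:B=>C} --- it suffices to treat a threefold $X$ satisfying (\ref{item:exact}).

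By Corollary \ref{cor:MMP}, $X$ is birational to a smooth projective threefold $X^{\min}$ which itself satisfies (\ref{item:exact}) and is either minimal (so $K_{X^{\min}}$ is nef) or admits the structure of a Mori fibre space. Since the Kodaira dimension is a birational invariant, it is enough to show that $X^{\min}$ is not of general type. In the Mori fibre space case this is immediate: $X^{\min}$ is then uniruled and therefore has $\kappa(X^{\min}) = -\infty$.

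It remains to rule out the possibility that $X^{\min}$ is smooth, minimal, and of general type. In that scenario $K_{X^{\min}}$ would be nef and big, forcing $K_{X^{\min}}^3 > 0$. On the other hand, Lemma \ref{lem:c1c2=c3=0} gives $c_1(X^{\min}) \cdot c_2(X^{\min}) = 0$, equivalently $K_{X^{\min}} \cdot c_2(X^{\min}) = 0$. The plan is to combine this with Miyaoka's inequality for smooth minimal threefolds of non-negative Kodaira dimension, which asserts
$$
K_{X^{\min}} \cdot \bigl(3 c_2(X^{\min}) - c_1(X^{\min})^2\bigr) \geq 0,
$$
equivalently $K_{X^{\min}}^3 \leq 3\, K_{X^{\min}} \cdot c_2(X^{\min})$. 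Together with the vanishing $K_{X^{\min}} \cdot c_2(X^{\min}) = 0$ this forces $K_{X^{\min}}^3 \leq 0$, a contradiction.

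The main obstacle is precisely the minimal case. The vanishing $c_1 c_2 = 0$ that one gets essentially for free from exactness of the cup-product complex, via Hirzebruch--Riemann--Roch as in Lemma \ref{lem:c1c2=c3=0}, is strictly weaker than $K^3 \leq 0$, and to bridge the gap one must invoke Miyaoka's semipositivity theorem for $c_2$ on minimal threefolds with $\kappa \geq 0$. Once that input is available, the contradiction is immediate.
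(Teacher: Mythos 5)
Your proposal is correct and follows essentially the same route as the paper: reduce to a minimal model via Corollary \ref{cor:MMP}, invoke $c_1c_2=0$ from Lemma \ref{lem:c1c2=c3=0}, and contradict $K^3>0$ using Miyaoka's inequality (the paper quotes the sharper constant $8/3$ rather than $3$, but this is immaterial). Your explicit disposal of the Mori fibre space case via uniruledness is only implicit in the paper's one-line reduction to the minimal case.
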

\begin{proof}
Since (\ref{item:holo}) $\Rightarrow $ (\ref{item:closed}) is clear and (\ref{item:closed})  $\Rightarrow$ (\ref{item:exact}) by \cite{Sch19} (see Theorem \ref{thm:B=>C}), we may assume that $X$ satisfies (\ref{item:exact}).
For a contradiction, we assume that $X$ is of general type.
By Corollary \ref{cor:MMP}, we may additionally assume that $X$ is minimal.
By the Miyaoka--Yau inequality, 
$$
0>c_1^3(X)\geq \frac{8}{3} c_1c_2(X) .
$$
This contradicts Lemma \ref{lem:c1c2=c3=0}, which concludes the corollary.
\end{proof}

\section{1-forms on threefolds of non-negative Kodaira dimension}

In the case of non-negative Kodaira dimension, our main results will follow from:

\begin{theorem} \label{thm:kod=0,1,2,3}
Let $X$ be a smooth complex projective threefold of non-negative Kodaira dimension and with $K_X$ nef.
Assume that $X$ satisfies  condition (\ref{item:exact}). 

Then there is a finite \'etale covering $\tau:X'\to X$ which splits into a product $X'\cong A'\times S'$, where
$A'$ is an abelian variety of positive dimension. 
\end{theorem}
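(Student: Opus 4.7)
The strategy is to combine the Albanese structural information coming from condition (\ref{item:exact}) with the Chern class vanishing from Lemma \ref{lem:c1c2=c3=0} to force a product decomposition after passing to a finite étale cover.

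First I would apply Theorem \ref{thm:C=>...}(2) to the Albanese morphism $a:X\to\Alb(X)$ together with the one-form $\omega$ whose pullback witnesses exactness. The conclusion that $a(X)$ is fibered by tori yields a subtorus $B\subset\Alb(X)$ and a surjective morphism $\pi:X\to A$ onto the abelian quotient $A:=\Alb(X)/B$, where $\dim A>0$ because $\omega\ne 0$ factors through $A$. Since condition (\ref{item:exact}) passes to finite étale covers, I may iterate this construction on covers of $X$. Combining with Lemma \ref{lem:contracted-divisor}, pathological contracted prime divisors for $\pi$ either do not occur or force $A$ to be an elliptic curve with divisor proportional to a fiber, a situation amenable to direct case analysis. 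Together with the vanishing of the topological Euler characteristic $c_3(X)=0$ from Lemma \ref{lem:c1c2=c3=0}, this pushes toward $\pi$ being equidimensional and, after a suitable étale base change, smooth.

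Having produced a smooth morphism $\pi:X\to A$ after a finite étale cover, I would then use the vanishing $c_1c_2(X)=0$ together with minimality of $X$, the assumption $\kappa(X)\ge 0$, and the abundance theorem for threefolds to deduce that $\pi$ is isotrivial and, after a further étale cover, a trivial bundle. The Fischer--Grauert criterion reduces isotriviality to constancy of the isomorphism class of the fibers, and for a minimal threefold the condition $c_1c_2=0$ is extremely restrictive: via Miyaoka's inequality $K_X\cdot c_2\ge 0$ in the equality case, together with Kawamata--Beauville--Bogomolov-style decomposition results for minimal varieties of Kodaira dimension zero, the fibers of $\pi$ must be (products involving) abelian or strict Calabi--Yau factors, and the 1-form without zeros on $X$ eliminates the Calabi--Yau possibility for the abelian factor itself.

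The hard part will be precisely this isotriviality/splitting step, where one has to treat separately the cases $\dim A\in\{1,2,3\}$ corresponding to smooth fibrations in surfaces, in curves, and étale covers, using abundance and the Iitaka fibration to control the Kodaira dimensions of fibers and invoking appropriate rigidity or classification results in each case. A further delicate point is to ensure that the abelian factor $A'$ appearing in $X'\cong A'\times S'$ is compatible with the target $A$ of $\pi$ obtained from the Albanese, so that the splitting refines to the stronger statement (\ref{item:thm:classification:kappa=0,1,2,3}) of Theorem \ref{thm:classification}; this compatibility should follow from the fact that both abelian factors originate, after étale cover, from the Albanese variety.
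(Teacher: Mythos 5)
Your proposal inverts the logical order of the actual argument, and the inversion is where the gap lies. You propose to first produce a smooth morphism $\pi:X\to A$ to a positive-dimensional abelian variety and then split it; but in the paper the smooth morphism is a \emph{consequence} of the splitting (see Corollary \ref{cor:kod=0,1,2,3}(\ref{item:pi})), not an input to it -- indeed, producing such a smooth morphism directly is essentially Conjecture \ref{conj:smooth}, which is as hard as the theorem itself. The step you summarize as ``this pushes toward $\pi$ being equidimensional and, after a suitable \'etale base change, smooth'' is unsupported by the tools you cite: Theorem \ref{thm:C=>...}(\ref{item:thm:fibred}) only constrains the Albanese \emph{image} (and does not even give surjectivity of $X\to\Alb(X)/B$), Lemma \ref{lem:contracted-divisor} only excludes prime divisors contracted to points, and $c_3(X)=0$ says nothing about multiple fibres, non-reduced fibres, or jumps of fibre dimension along curves. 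A concrete case where your sketch has no traction: $\kappa(X)=1$ with Iitaka fibration $f:X\to C$ onto an elliptic curve $C\cong\Alb(X)$ and general fibre a bi-elliptic or abelian surface contracted by the Albanese map. Here your $\pi$ \emph{is} $f$, and proving that $f$ becomes smooth (equivalently that $X$ splits) after an \'etale cover requires, in the paper, Campana--Peternell's theorem on non-vertical two-forms, Koll\'ar's theorem that $f_\ast\omega_X$ and $R^1f_\ast\omega_X$ are line bundles, and Simpson's theorem on cohomology support loci to show these line bundles are torsion in $\Pic^0(C)$ -- none of which is replaced by anything in your outline.

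The second half of your plan (smooth $\pi$ implies isotrivial implies trivial after \'etale cover) is also not routine: it is Corollary \ref{cor:fibre-bundle}, again deduced in the paper \emph{from} the classification rather than proved independently, and ``Miyaoka's equality case plus Beauville--Bogomolov for the fibres'' does not by itself yield isotriviality of a given fibration. The paper's actual route is a case division by $\kappa(X)\in\{0,1,2\}$ organized around the \emph{Iitaka fibration} rather than the Albanese: for $\kappa=2$ one uses $c_1c_2=0$ to show the resulting elliptic fibration is generically isotrivial with almost all singular fibres multiple (Proposition \ref{prop:grassi}), kills the monodromy by an \'etale cover (Proposition \ref{prop:monodromy}), and then invokes Nakayama's structure theorem and the Greb--Kebekus--Peternell extension of quasi-\'etale covers to split off the elliptic factor (Theorem \ref{thm:elliptic-threefolds:basic}); for $\kappa=1$ one classifies according to the dimension of the Albanese image of the Iitaka fibre; for $\kappa=0$ one uses abundance and Beauville--Bogomolov. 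Your proposal correctly identifies the relevant inputs ($c_1c_2=c_3=0$, passage to \'etale covers, the Albanese) but does not supply the arguments that bridge them to the conclusion.
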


The proof of the above theorem occupies the following three sections and the final arguments will be summarized in Section \ref{sec:proof:thm:kod=0,1,2,3}. 
Before we turn to the proofs, let us note the following consequence.

\begin{corollary} \label{cor:kod=0,1,2,3}
In the notation of Theorem \ref{thm:kod=0,1,2,3}, let $\omega\in H^0(X,\Omega_X^1)$ be the one-form from condition (\ref{item:exact}).
Then the following holds:
\begin{enumerate}
\item up to passing to a finite \'etale covering of $X'$, we may assume that $A'$ is simple and $\tau^\ast \omega$ restricts to a nonzero form on $A'\times \{s\}$ for all $s\in S'$;\label{item:A'=simple}
\item 
$S'$ is smooth projective with $\kappa(S')=\kappa(X)$; 
\label{item:kappa(S)}
\item $\tau^\ast \omega$ has no zeros on $X'$ and so $\omega$ has no zeros on $X$; \label{item:omega} 
\item There is a smooth morphism $\pi:X\to A$ to an abelian variety $A$ such that for any $s'\in S'$, the composition
 \begin{align} \label{eq:A'->A}
A'\cong A'\times\{s'\}\hookrightarrow A'\times S'\cong X'\stackrel{\tau}\longrightarrow X\stackrel{\pi}\longrightarrow A
\end{align}
is finite \'etale. 
\label{item:pi}
\end{enumerate}
\end{corollary}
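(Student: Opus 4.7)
For items (2) and (3), the arguments are direct. For (2), $S'$ is smooth projective as a factor of the smooth projective product $X'=A'\times S'$; since $\tau\colon X'\to X$ is \'etale, $\kappa(X)=\kappa(X')$, and Iitaka's easy additivity for products yields $\kappa(X')=\kappa(A')+\kappa(S')=\kappa(S')$. For (3), granting (1), decompose $\tau^\ast\omega=\omega_{A'}+\omega_{S'}$ under the K\"unneth splitting $H^0(\Omega_{X'}^1)=H^0(\Omega_{A'}^1)\oplus H^0(\Omega_{S'}^1)$, with $\omega_{A'}\neq 0$. Since $A'$ is abelian, $\omega_{A'}$ is translation-invariant and nowhere vanishing; restricting $\tau^\ast\omega$ at any $(a,s)\in X'$ to $T_aA'\subset T_{(a,s)}X'$ gives $\omega_{A'}|_a\neq 0$, so $\tau^\ast\omega$ has no zeros on $X'$ and $\omega$ has no zeros on $X$ by \'etaleness of $\tau$.

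For item (4), I build $\pi$ from the Albanese. Since $\tau$ is \'etale, the induced map $\Psi\colon\Alb(X')=A'\times\Alb(S')\twoheadrightarrow\Alb(X)$ is an isogeny. Setting $B:=\Psi(\{0\}\times\Alb(S'))\subset\Alb(X)$ and $A:=\Alb(X)/B$, I take $\pi\colon X\to A$ to be the Albanese map of $X$ followed by the quotient. The restriction of $\pi\circ\tau$ to any slice $\{a\}\times S'$ factors through the coset $\Psi(\{a\}\times\Alb(S'))$ of $B$ and hence maps to a single point in $A$, so $\pi\circ\tau=\phi\circ\pr_{A'}$ for some morphism $\phi\colon A'\to A$. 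Surjectivity of $\phi$ follows from surjectivity of $\Psi$ and the definition of $B$, while $\ker\phi$ equals the image of the finite group $\ker\Psi$ under the first projection and hence is finite; thus $\phi$ is an isogeny. Then $\pi\circ\tau=\phi\circ\pr_{A'}$ is a composition of smooth morphisms, and smoothness of $\pi$ follows by \'etale descent along $\tau$.

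Item (1) is where the real care is needed, and this is the main obstacle. Poincar\'e complete reducibility provides an isogeny $A_1\times\cdots\times A_m\twoheadrightarrow A'$ with each $A_i$ simple; crossed with $\id_{S'}$, this induces an \'etale cover of $X'$ in which the abelian factor becomes $A_1\times\cdots\times A_m$, and K\"unneth decomposes $\tau^\ast\omega=\omega_1+\cdots+\omega_m+\omega_{S'}$. If some $\omega_i\neq 0$, I relabel $A_i$ as the new simple $A'$ and absorb the remaining factors into $S'$, obtaining $\tau^\ast\omega|_{A'\times\{s\}}=\omega_i\neq 0$ for every $s$. The subtle case is when all $\omega_i$ vanish, i.e.\ $\tau^\ast\omega$ is pulled back entirely from $S'$. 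Because condition (\ref{item:exact}) passes to \'etale covers and interacts compatibly with K\"unneth, one shows that $S'$ itself satisfies the exact-cup-product condition with $\omega_{S'}$; since $\dim S'\leq 2$ and $\kappa(S')=\kappa(X)\geq 0$, the two-dimensional case of the classification, from \cite[Theorem 1.3 and Corollary 3.2]{Sch19}, supplies an \'etale cover of $S'$ containing an elliptic factor $E$ on which $\omega_{S'}$ restricts to a nonzero translation-invariant form. Pulling this cover back to $X'$ and reabsorbing $E$ into the abelian part strictly increases $\dim A'$, and one more application of the Poincar\'e step isolates $E$ as the desired simple $A'$. The delicate point throughout is the bookkeeping of successive \'etale covers so that each refined product structure genuinely descends to an \'etale cover of $X$.
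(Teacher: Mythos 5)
Your treatment of items (1)--(3) follows essentially the same route as the paper: the dichotomy in (1) between a nonzero $A'$-component of $\tau^\ast\omega$ and the case where $\tau^\ast\omega$ comes entirely from $S'$ (handled via \cite[Corollary 3.2]{Sch19} and a further \'etale cover), and the arguments for (2) and (3), all match the paper's proof.

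Item (4), however, has a genuine gap. First, the claim that $\Psi\colon \Alb(X')\to\Alb(X)$ is an isogeny is false for a general finite \'etale cover: one only gets surjectivity, and $q(X')$ can strictly exceed $q(X)$ (already for the cover $E\times F\to (E\times F)/G$ of a bielliptic surface), so $\ker\Psi$ may be positive-dimensional. More seriously, your choice $B=\Psi(\{0\}\times\Alb(S'))$ can be all of $\Alb(X)$, in which case $A=\Alb(X)/B$ is a point and $\phi$ is constant rather than an isogeny; this happens exactly when the Albanese image of $A'\times\{0\}$ is contained in $B$, equivalently when no nonzero one-form on $X$ pulls back to a form lying purely in $H^0(A',\Omega^1_{A'})$. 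A concrete instance: let $C$ be a genus-$3$ curve with a free involution $\iota$, let $\Z/2$ act on $X'=E\times E\times C$ by $(x,y,c)\mapsto (y,x,\iota(c))$, and set $X=X'/(\Z/2)$, $A'=E$ the first factor, $S'=E\times C$. Every one-form on $X$ pulls back to $(\alpha,\alpha,\gamma)$ with equal components on the two elliptic factors, so $\Psi(\{0\}\times\Alb(S'))=\Alb(X)$ and your $A$ is trivial, even though this $X$ satisfies all the hypotheses and the chosen splitting satisfies item (1) (the form corresponding to $(\alpha,\alpha,0)$ is nowhere zero and restricts nontrivially to $A'\times\{s\}$). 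The correct construction, which is what the paper does, quotients by a complement of the image of $A'$ rather than by the image of $\Alb(S')$: since $A'$ is simple and $\tau^\ast\omega$ restricts nontrivially to $A'\times\{s'\}$, the Albanese image of $A'\times\{s'\}$ is a translate of a positive-dimensional abelian subvariety $A''\subseteq \Alb(X)$ isogenous to $A'$; by Poincar\'e complete reducibility (using projectivity of $\Alb(X)$) one chooses a complement $B''$ of $A''$ and sets $A:=\Alb(X)/B''$, so that $A'\to A''\to A$ is a composition of isogenies and hence finite \'etale. The remaining steps of your argument (constancy on the slices, and smoothness of $\pi$ by descent along the \'etale map $\tau$) then go through.
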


\begin{proof}
We have
$$
\tau^\ast\omega\in H^0(X',\Omega_{X'}^1)\cong H^0(A',\Omega_{A'}^1)\oplus H^0(S',\Omega_{S'}^1) .
$$
To see that $\tau^\ast \omega$ restricts to a nonzero form on $A'\times \{s\}$ for all $s\in S'$, it suffices to show that it does not map to zero under the projection to $ H^0(A',\Omega_{A'}^1)$.
If it does map to zero, then $S'$ satisfies the equivalent conditions (\ref{item:holo}) $\Leftrightarrow$ (\ref{item:closed}) $\Leftrightarrow$ (\ref{item:exact}).
This implies by \cite[Corollary 3.2]{Sch19} that some \'etale cover of $S'$ splits off a positive-dimensional simple abelian variety and the restriction of the pullback of $\omega$ to that factor is non-trivial, as we want.
Hence, up to passing to another \'etale cover and replacing the decomposition $X'\cong A'\times S'$ by another one, we may assume that $\tau^\ast \omega$ restricts to a nonzero form on $A'\times \{s\}$ for all $s\in S'$.
Up to passing to another finite \'etale cover of $X'$, we may by the complete reducibility theorem also assume that $A'$ is simple and so item (\ref{item:A'=simple}) of the corollary holds.

Since $\tau$ is \'etale and $X$ is smooth, so is $X'$. 
Since $X'\cong A'\times S'$, $S'$ is smooth of Kodaira dimension $\kappa(S')=\kappa(X')=\kappa(X)$, as claimed in item (\ref{item:kappa(S)}).
Moreover, item (\ref{item:omega}) is an immediate consequence of item (\ref{item:A'=simple}).

It remains to prove (\ref{item:pi}).
Consider the Albanese map $a:X\to \Alb(X)$.
Since $A'$ is simple and $\tau^\ast \omega$ restricts non-trivially to $A'\times \{s'\}$ for all $s'\in S'$, we find that for all $s'\in S'$, the image of the natural composition
$$
A'\cong A'\times\{s'\}\hookrightarrow A'\times S'\cong X'\stackrel{\tau}\longrightarrow X\stackrel{a}\longrightarrow \Alb(X)
$$
is the translate of an abelian subvariety of $\Alb(X)$ that is isogeneous to $A'$.
Since $\Alb(X)$ is projective, there is a quotient map $\Alb(X)\to A$ such that for all  $s'\in S'$, the natural composition
$$
A'\cong A'\times\{s'\}\hookrightarrow A'\times S'\cong X'\stackrel{\tau}\longrightarrow X\stackrel{a}\longrightarrow \Alb(X)\twoheadrightarrow A
$$
is finite \'etale.
We then define $\pi:X\to A$ as composition of the Albanese map of $X$ with the projection $\Alb(X)\twoheadrightarrow A$.
With this definition, the composition (\ref{eq:A'->A}) is finite \'etale for all $s'\in S$.
This implies that   $\pi\circ \tau:X'\to A$ is smooth.
Since $\tau$ is \'etale, it follows that $\pi$ is smooth as well, as we want. 
This proves (\ref{item:pi}), which concludes the corollary.
\end{proof}

\section{Proof of Theorem \texorpdfstring{\ref{thm:kod=0,1,2,3}}{4.1}  for \texorpdfstring{$\kappa(X)=2$}{kappa=2}}
\label{sec:kappa=2}

In this section we aim to prove Theorem \ref{thm:kod=0,1,2,3} in the case where $\kappa(X)=2$.

\subsection{Preliminaries on elliptic threefolds} 

\begin{definition}
An elliptic threefold is a normal projective threefold $X$ with a morphism $f:X\to S$ to a normal projective surface whose general fibre is an elliptic curve.
We say that $f$ has trivial (or no) monodromy, if $R^1f_\ast \Q$ restricts to a trivial local system over some non-empty (Zariski) open subset $U\subset S$. 
\end{definition}

\begin{lemma} \label{lem:no-monodromy=>1-form}
Let $f:X\to S$ be an elliptic threefold with trivial monodromy and such that $X$ has rational singularities (e.g.\ terminal singularities).
Then the general fibre of $f$ is not contracted via the Albanese morphism $a:X\to \Alb(X)$.
\end{lemma}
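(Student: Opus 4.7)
The plan is to combine Deligne's global invariant cycle theorem with Hodge theory: the triviality of the monodromy will produce, via global invariant cycles, a holomorphic one-form on $X$ whose restriction to the general fibre $F$ is non-zero, and this immediately forces $F$ not to be contracted by the Albanese map.

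First I reduce to the case that $X$ is smooth. Let $\mu\colon \tilde X\to X$ be a resolution of singularities. Because $X$ has rational singularities, $\Alb(\tilde X)=\Alb(X)$ and the Albanese morphism of $\tilde X$ is $a\circ\mu$. A general fibre of $f$ is a smooth elliptic curve, hence lies in the smooth locus of both $X$ and $\mu$, and can be identified with a general fibre of $\tilde f:=f\circ\mu$. Since $\mu$ is an isomorphism over a dense open set of $S$, the monodromy of $R^1 \tilde f_\ast \Q$ over a suitable non-empty open subset coincides with that of $R^1 f_\ast \Q$, hence is trivial. So I may assume $X$ is smooth projective.

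Next, after shrinking $U$ if necessary, I may assume $f$ is smooth over $U$ and $R^1 f_\ast \Q|_U$ is a trivial local system. Fix $s\in U$ and set $F=f^{-1}(s)$. By Deligne's global invariant cycle theorem, applied to the proper morphism $f\colon X\to S$ with $X$ smooth projective, the restriction map
$$H^1(X,\Q)\longrightarrow H^1(F,\Q)^{\pi_1(U,s)}$$
is surjective. By the triviality of the monodromy, the target equals all of $H^1(F,\Q)\cong \Q^2$. Since the restriction is a morphism of pure Hodge structures and hence strict for the Hodge filtration, the induced map $H^0(X,\Omega_X^1)\to H^0(F,\Omega_F^1)$ is also surjective. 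In particular, there exists $\omega\in H^0(X,\Omega_X^1)$ with $\omega|_F\neq 0$.

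Finally, every holomorphic one-form on $X$ is pulled back from $\Alb(X)$, so $\omega$ descends to a one-form on $\Alb(X)$ whose pullback via $a|_F\colon F\to \Alb(X)$ is non-zero. This forces $a|_F$ to be non-constant, i.e., $F$ is not contracted by $a$. I do not expect any serious obstacle: the only minor technical points are the preservation of trivial monodromy under a resolution (immediate since $\mu$ is an isomorphism near a general fibre) and the applicability of the global invariant cycle theorem when the base $S$ is only normal (which is not an issue, since that theorem only uses smoothness and projectivity of the source and smoothness of the fibres over $U$).
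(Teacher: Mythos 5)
Your proof is correct and follows essentially the same route as the paper: pass to a resolution (using that rational singularities make the Albanese of the resolution factor through $X$), then apply Deligne's global invariant cycle theorem together with trivial monodromy and strictness of Hodge structure morphisms to produce a holomorphic one-form restricting nontrivially to the general fibre. The paper's proof is just a condensed version of your argument.
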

\begin{proof}
Since $X$ has rational singularities, the Albanese morphism of any resolution $\widetilde X$ of $X$ factors through $X$, and so $a$ is defined.
The lemma then follows from Deligne's global invariant cycle theorem (see e.g.\ \cite[Theorem 4.24]{VoiII}) applied to $\widetilde X$, which implies that $\widetilde X$ carries a holomorphic one-form which restricts nontrivially on the general fibre of the natural map $\widetilde X\to S$. 
\end{proof}

We will need the following result, c.f.\  \cite[Theorem 2.7]{gra94}.

\begin{proposition} \label{prop:grassi}
Let $X$ be terminal threefold with $K_X$ nef of Kodaira dimension two and with Iitaka fibration $f:X\to S$.
If $c_1c_2(X)=0$, then  $f$ has only finitely many  singular fibers which are not multiples of a smooth elliptic curve.
Moreover, the $j$-invariants of the smooth fibres of $f$ are constant.
\end{proposition}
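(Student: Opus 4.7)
The plan is to combine Miyaoka's semipositivity inequality for minimal threefolds with the canonical bundle formula for elliptic fibrations, so that the vanishing $c_1c_2(X)=0$ becomes the boundary case of a bound and forces each of several non-negative geometric contributions to vanish. Since $X$ is terminal with $K_X$ nef and $\kappa(X)=2$, Miyaoka's theorem gives that $3c_2(X)-c_1(X)^2$ is pseudo-effective. Pairing with the nef class $c_1(X)$ and using $c_1^3(X)=0$ (which follows from $\kappa(X)=2$ together with $K_X$ nef) yields $c_1c_2(X)\geq 0$, so the hypothesis is saturating Miyaoka's inequality and should be geometrically very restrictive.

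First I would pass to a flat birational model $\tilde f\colon \tilde X\to \tilde S$ with $\tilde X$ and $\tilde S$ smooth projective, comparing Chern numbers via the discrepancies of the chosen resolutions. Kawamata's canonical bundle formula then provides a decomposition
$$K_{\tilde X}\sim_\Q \tilde f^{\ast}\bigl(K_{\tilde S}+J+D\bigr),$$
where $J$ is the moduli $\Q$-divisor tracking the variation of the $j$-invariant, and $D$ is an effective $\Q$-divisor supported on the discriminant of $\tilde f$, whose coefficients are prescribed by Kodaira's classification of singular elliptic fibres and by the multiplicities of multiple fibres. A Grothendieck--Riemann--Roch calculation on $\tilde f$ then writes $c_1c_2(\tilde X)$ as a sum of genuinely non-negative terms: one proportional to $\deg J$, one running over codimension-one discriminant components of Kodaira type different from ${}_mI_0$, and a sum of local contributions at isolated singular points of $D$. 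The equality $c_1c_2=0$ forces every summand to vanish; hence $J\equiv 0$, so the $j$-invariant is constant on the smooth locus of $\tilde f$; every codimension-one component of the discriminant is of multiple-$I_0$ type, i.e.\ its fibres are multiples of smooth elliptic curves; and the remaining truly singular fibres lie over at most finitely many points of $\tilde S$. Pushing these conclusions back through the birational modifications (which affect neither the generic fibre nor the codimension-one behaviour over $S$) yields the statement for the original $f$.

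The main obstacle is to carry out the positive-term decomposition of $c_1c_2$ with enough precision that no cancellations occur between contributions of codimension-one components, isolated points of the discriminant, and the terminal singularities of $X$ itself. This is precisely the relative Noether-type formula for threefold elliptic fibrations and constitutes the technical heart of Grassi's Theorem 2.7 in \cite{gra94}: one must bookkeep contributions of codimension-one and codimension-two discriminant loci separately, and verify (for instance via Reid's formula for terminal singularities, or by descending from a crepant partial resolution) that the singular locus of $X$ contributes nothing to $c_1c_2$. Once this decomposition is in place the vanishing conclusions are immediate, so the substantive work is the numerical calculation carried out by Grassi, which I would either cite directly or reprove on the flat model $\tilde f$.
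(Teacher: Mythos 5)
Your strategy is essentially Grassi's original one and is genuinely different from the paper's. The paper never passes to a flat model and never invokes the canonical bundle formula: it writes $K_X\sim_\Q \lambda f^*A$ for a very ample $A$ on $S$ (abundance), takes a general $C\in|A|$ and sets $Y=f^{-1}(C)$, which is smooth by Bertini and lies in the smooth locus of $X$ because terminal threefold singularities are isolated. The Whitney sum formula for $0\to T_Y\to T_X|_Y\to f^*\mathcal{O}_S(A)|_Y\to 0$ together with adjunction gives $c_2(Y)=0$ directly from $c_1c_2(X)=0$; since $Y\to C$ is a relatively minimal elliptic surface of Kodaira dimension one and $c_2(Y)$ equals the sum of the Euler numbers of its singular fibres, Kodaira's table forces every singular fibre of $Y\to C$ to be a multiple of a smooth elliptic curve and the $j$-map to be constant, and generality of $C$ transfers this to $f$ away from finitely many points of $S$. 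This buys a short, self-contained argument that never has to define or track $c_1c_2$ on a singular or birationally modified model.

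The gap in your version is that its technical heart is left unexecuted: the decomposition of $c_1c_2$ into manifestly non-negative local contributions is precisely the content of Grassi's Theorem 2.7, which you propose to cite or reprove. If you cite it, the proof reduces to a reference (acceptable, but then the outline around it is not doing work); if you reprove it, note that passing to a smooth flat model genuinely changes $c_1c_2$ -- the exceptional divisors and the Reid-type corrections at the terminal points enter the bookkeeping -- so "pushing the conclusions back through the birational modifications" is not automatic, and this is exactly the delicate step the paper's hyperplane-section trick avoids. Two smaller points: with $c_1=-K_X$ and $c_1^3=0$ (from $\kappa(X)=2$ and $K_X$ nef), Miyaoka's inequality gives $c_1c_2(X)\le 0$ rather than $\ge 0$ (the hypothesis is still the boundary case, so the strategy survives, but the sign of the terms you need to be non-negative flips accordingly); and the constancy of the $j$-invariant should be deduced from the vanishing of the moduli part, which again requires the canonical bundle formula machinery you have only sketched.
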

\begin{proof}
The Iitaka fibration $f:X\to S$ is a morphism by the abundance conjecture \cite{Ka92}.
In particular, there is a very ample divisor $A$ on $S$ such that $K_X=\lambda f^\ast A$ for some positive rational number $\lambda$. 
Let $C\subset S$ be a general element of $|A|$ and let $Y:=f^{-1}(C)$.
Since $S$ is normal and $X$ is terminal, it follows from Bertini's theorem that $Y$ and $C$ are smooth.
Moreover, $f|_Y:Y\to C$ is a minimal elliptic surface of Kodaira dimension one, because $X$ is minimal of Kodaira dimension two and Iitaka fibration $f$.
Since $Y$ is smooth and contained in the smooth locus of $X$, we have a short exact sequence of vector bundles on $Y$:
$$
0\longrightarrow T_Y\longrightarrow T_X|_Y\longrightarrow f^\ast \mathcal O_S(A) |_Y\longrightarrow 0 .
$$ 
Applying the Whitney sum formula, we deduce that the second Chern number of $Y$ is given by
\begin{align*}
c_2(Y)&=c_2(X)|_Y-c_1(Y)f^\ast A|_Y =(-\lambda)^{-1}c_1c_2(X)-c_1(Y) f^\ast A^2 .
\end{align*}
By adjunction, $c_1(Y)=(c_1(X)- f^\ast A)|_Y$, and so  
$
c_1(Y) f^\ast A^2=0
$,
as it is a multiple of $f^\ast A^{3}=0$.
Since $c_1c_2(X)=0$, we conclude $c_2(Y)=0$ from the above formula.
Hence, $Y$ is a minimal surface of Kodaira dimension one with $c_2(Y)=0$.
Since $c_2(Y)$ coincides with the sum of the Euler numbers of the singular fibres of $Y\to C$, we find by Kodaira's classification of singular fibres (see \cite{barth-etal}) that any singular fibre of $Y\to C$ is a multiple of a smooth elliptic curve.
This implies that the j-invariant $j:C\dashrightarrow \CP^1$ is not dominant and so it must be constant. 
This proves the proposition.
\end{proof}

We have the following important structure theorem of Nakayama.
To state it, recall that an elliptic threefold $f:X\to S$ is a standard elliptic fibration if $X$ is $\Q$-factorial and terminal, $f$ is equi-dimensional  and $K_X\sim_{\Q}f^\ast(K_S+\Delta)$ for an effective $\Q$-divisor $\Delta$ such that $(S,\Delta)$ is log terminal.

\begin{theorem}[{\cite[Theorem A.1]{Nak-local}}] \label{thm:nakayama}
Let $f:X\to S$ be an elliptic threefold. 
Then there is a proper birational morphism $S'\to S$ and a standard elliptic fibration $f':X'\to S'$  that is birational to $f$ over $S$, such that $K_{X'}$ is semi-ample over $S$.
\end{theorem}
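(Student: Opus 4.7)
The plan is to establish the theorem in two stages: first simplify the total space $X$ by a relative minimal model program over $S$, and then adjust the base $S$ so that the canonical bundle formula produces a Kawamata log terminal pair, all while arranging equidimensionality.

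In more detail, I would begin by resolving $X$ and running the relative MMP over $S$, using the three-dimensional MMP (Mori, Kawamata, Kollár--Shokurov). After finitely many divisorial contractions and flips, one obtains a $\Q$-factorial terminal threefold $X_1$ birational to $X$ over $S$ with $K_{X_1}$ nef over $S$. Because the general fiber of $f_1: X_1 \to S$ is elliptic, $K_{X_1}$ has relative numerical dimension zero on the generic fiber, and relative abundance for threefolds shows that $K_{X_1}$ is semi-ample over $S$; the induced relative Iitaka map must factor through $f_1$ up to a base modification. This produces a morphism $\sigma: S_1 \to S$ (the Stein factorization of the relative Iitaka fibration) and a model $X_1 \to S_1$ with $K_{X_1} \sim_{\Q,S_1} 0$.

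Next I would invoke Kodaira's canonical bundle formula, generalized to higher dimensions by Fujino--Mori, to write $K_{X_1} \sim_{\Q} f_1^*(K_{S_1} + \Delta)$, where $\Delta$ is an effective $\Q$-divisor combining the discriminant part and the moduli $j$-part. After a further birational modification $S' \to S_1$ one can arrange that $(S', \Delta')$ is Kawamata log terminal, by the semistable reduction and weak positivity results for the moduli part. To achieve equidimensionality, I would perform additional flops over $S'$: any divisor in $X_1$ contracted to a curve or point of $S'$ lies in a $K_{X_1}$-trivial fiber, so it lies in an extremal face of the relative cone along which one can run $K$-trivial flops. The technical heart of the argument, following Nakayama, is to show that finitely many such flops combined with further controlled base changes produce a model $f': X' \to S'$ which is equi-dimensional, $\Q$-factorial, terminal, and satisfies the canonical bundle formula with a klt base pair.

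The main obstacle will be the equidimensionality step: controlling the non-equidimensional locus requires a delicate analysis of Kodaira-type singular fibers for three-dimensional elliptic fibrations and an argument that the sequence of $K$-trivial flops needed to flatten $f_1$ terminates. A secondary difficulty is tracking how the boundary $\Delta$ transforms under each base modification so that log terminality is preserved throughout; I would address this by appealing to the base change behavior of the moduli $j$-part and the discriminant part, as established in Nakayama's local study of elliptic fibrations. These two issues together are precisely what makes Theorem \ref{thm:nakayama} a substantial statement rather than a formal consequence of MMP plus the canonical bundle formula.
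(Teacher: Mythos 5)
First, a point of comparison: the paper does not prove this statement at all. Theorem \ref{thm:nakayama} is quoted from Nakayama \cite[Theorem A.1]{Nak-local} and used as a black box, so there is no in-paper argument to measure your proposal against; the only question is whether your sketch would stand on its own.

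As a reconstruction of the expected strategy it is broadly reasonable (relative MMP over $S$ to reach a terminal $\Q$-factorial model with $K$ nef over $S$, relative abundance, the canonical bundle formula to produce a boundary $\Delta$ on a modified base, and further surgery to achieve equidimensionality). But as a proof it has genuine gaps, and you name them yourself. (i) You assert that finitely many $K$-trivial flops combined with base modifications yield an equi-dimensional model, but you give no argument for why the divisors of $X_1$ contracted to points or curves of $S'$ can be eliminated this way, nor why the process terminates; this is precisely the content of Nakayama's local analysis of elliptic fibrations and is not a formal consequence of the MMP. (ii) You do not verify that $(S',\Delta')$ can be made log terminal while simultaneously keeping $f'$ equi-dimensional and $X'$ terminal and $\Q$-factorial; these requirements interact, since each flattening step changes the discriminant divisor and hence $\Delta'$. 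There is also a small imprecision early on: once relative abundance gives that $K_{X_1}$ is semi-ample over $S$, the triviality of $K_{X_1}$ on the generic fibre forces the relative Iitaka fibration to have relative Iitaka dimension zero, so $S_1\to S$ is a birational modification of the base rather than a genuinely new fibration through which $f_1$ ``factors''; this is what you want, but the phrasing obscures it. In short, the skeleton is right, and the flesh --- equidimensionality and the klt base pair --- is exactly what \cite[Theorem A.1]{Nak-local} supplies and what your proposal ultimately defers to it.
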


\begin{lemma} \label{lem:pi_1(X)}
Let $f: X\rightarrow S$ be an elliptic threefold, such that $X$ is terminal, $\Q$-factorial and $K_X$ is $f$-nef.
Let $S'\to S$ and $f':X'\to S'$ be as in Theorem \ref{thm:nakayama}.
Then there is a smooth open subset $U'\subset S'$, whose complement in $S'$ is zero-dimensional, and such that the base change $X'_{U'}:={f'}^{-1}(U')$ is a smooth threefold.
Moreover, for any such open subset $U'\subset S'$, the natural birational map $X'_{U'}\dashrightarrow X$ induces an isomorphism
$$
\pi_1(X'_{U'})\cong \pi_1(X^{\sm}) ,
$$ 
where $X^{\sm}\subset X$ denotes the smooth locus of $X$.
\end{lemma}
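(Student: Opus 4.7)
The plan is to reduce the $\pi_1$ assertion to the general topological fact that removing a closed analytic subset of complex codimension $\geq 2$ from a connected complex manifold does not change the fundamental group; for this, I would exhibit a common Zariski open subset $V$ that embeds into both $X'_{U'}$ and $X^{\sm}$ with complement of codimension $\geq 2$ in each.

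For the existence of $U'$: since $X'$ is terminal of dimension three, $\Sing(X')$ is zero-dimensional, and since $S'$ is a normal surface, $\Sing(S')$ is likewise finite. So $U':=S'\setminus (\Sing(S')\cup f'(\Sing(X')))$ has finite complement in $S'$, is smooth, and $X'_{U'}$ lies in the smooth locus of $X'$, hence is a smooth threefold.

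To build $V$, I would exploit the fact that the birational map $\phi:X'\dashrightarrow X$ over $S$ supplied by Theorem \ref{thm:nakayama} is an isomorphism in codimension one. Indeed, both $X$ and $X'$ are $\Q$-factorial and terminal, $K_X$ is $f$-nef by assumption, and $K_{X'}$ is $f'$-semi-ample by Theorem \ref{thm:nakayama}, hence nef over $S$; so $X$ and $X'$ are two relative $\Q$-factorial minimal models of the same elliptic threefold over $S$, and any two such models on a threefold are linked by a sequence of flops, each of which is an isomorphism outside codimension two. This produces opens $W\subset X'$ and $W'\subset X$, both with complement of codimension $\geq 2$, and an isomorphism $\phi|_W:W\to W'$. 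Setting $V:=W\cap X'_{U'}$ and $V':=\phi(V)\subset W'$, a codimension count shows $X'_{U'}\setminus V$ has codimension $\geq 2$ in $X'_{U'}$, while $W'\setminus V'=\phi(W\cap {f'}^{-1}(S'\setminus U'))$ has dimension $\leq 1$ (because $S'\setminus U'$ is finite and the fibres of $f'$ are curves); together with $\codim_X(X\setminus W')\geq 2$ this gives $\codim_X(X\setminus V')\geq 2$. Since $V'\cong V$ is smooth it lies in $X^{\sm}$, and its complement in $X^{\sm}$ still has codimension $\geq 2$, as $\Sing(X)$ has codimension $\geq 3$ by terminality.

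Finally, a closed analytic subset of complex codimension $\geq 2$ inside a connected complex manifold has real codimension $\geq 4$, so its removal preserves $\pi_1$. Applied to the inclusions $V\hookrightarrow X'_{U'}$ and $V'\hookrightarrow X^{\sm}$, and combined with $\pi_1(V)\cong\pi_1(V')$ via $\phi|_V$, this yields the sought isomorphism, manifestly induced by the birational map $\phi$. The step I expect to cause the most trouble is verifying that $\phi$ is an isomorphism in codimension one; once that is secured, the remainder is a routine codimension bookkeeping plus the topological principle above.
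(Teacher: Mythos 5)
Your proposal is correct and follows essentially the same route as the paper: both establish that $X$ and $X'$, being $\Q$-factorial terminal relative minimal models over $S$ (since $K_X$ is $f$-nef and $K_{X'}$ is semi-ample, hence nef, over $S$), are isomorphic in codimension one, and then conclude via the invariance of $\pi_1$ of a smooth variety under removal of closed subsets of complex codimension at least two, using that $S'\setminus U'$ is finite and $f'$ is equi-dimensional so that only a one-dimensional locus is removed from $X'$. The paper cites \cite[Theorem 3.52(2)]{kollar-mori} directly for the codimension-one isomorphism rather than passing through the flop decomposition, but this is an inessential difference.
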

\begin{proof}
Since $f'$ is a standard elliptic fibration, $X'$ has only terminal $\Q$-factorial singularities, $f'$ is equi-dimensional and $S'$ is normal.
In particular, $X'$ and $S'$ have isolated singularities.
Hence, there is an open subset $U'\subset S'$ whose complement in $S'$ is zero-dimensional, and such that the base change $X'_{U'}:={f'}^{-1}(U')$ is smooth.

Let now $U'\subset S'$ be any such subset.
Since $K_{X'}$ is semi-ample over $S$, it is in particular nef over $S$.
Hence, $X$ and $X'$ are birational minimal models over $S$ and so they are  isomorphic in codimension one, see e.g.\ \cite[Theorem 3.52(2)]{kollar-mori}.  
Since $S'\setminus U'$ is zero-dimensional and $f'$ is equi-dimensional, $X'\setminus X'_{U'}$ is at most one-dimensional.
Since $X'$ and $X$ are isomorphic in codimension one, we conclude the same for $X'_{U'}$ and $X$, and hence also for $X'_{U'}$ and $X^{\sm}$, because $X$ is terminal and so it has isolated singularities.
Since $X'_{U'}$ and $X^{\sm}$ are smooth, this implies 
$
\pi_1(X'_{U'})\cong \pi_1(X^{\sm}) 
$, as we want.
\end{proof}

\subsection{Condition (C) implies trivial monodromy after \'etale cover}
\label{subsec:ell3fold:nomonodromy}

\begin{lemma}\label{lem:nontriv-monodromy}
Let $X$ be a smooth projective threefold with $K_X$ nef of Kodaira dimension two and with Iitaka fibration $f:X\to S$.
Assume that (\ref{item:exact}) holds for $X$.
If $f$ has non-trivial monodromy, then $f$ is equi-dimensional.
\end{lemma}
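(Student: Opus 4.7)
\textbf{Proof plan for Lemma \ref{lem:nontriv-monodromy}.}

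I will argue the contrapositive: assuming $f\colon X\to S$ is not equi-dimensional, I want to show that $R^1 f_\ast\Q$ is trivial on some Zariski open of $S$. Since $f$ is a morphism from a threefold to a surface with general fibre a curve, failure of equi-dimensionality yields a prime divisor $D\subset X$ such that $f(D)=\{p\}$ is a single point.

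The first main step is to extract structural information from the existence of $D$ using Lemma \ref{lem:contracted-divisor}. The one-form $\omega\in H^0(X,\Omega_X^1)$ given by condition (\ref{item:exact}) is pulled back from $\Alb(X)$ via the Albanese map $a\colon X\to\Alb(X)$, say $\omega=a^\ast\alpha$. Let $B\subset\Alb(X)$ be the subtorus generated by $a(D)$; as in the proof of Lemma \ref{lem:contracted-divisor} one reduces to the quotient $A:=\Alb(X)/B$, on which $\omega$ descends to a form $\bar\alpha$. Lemma \ref{lem:contracted-divisor} then forces $A$ to be an elliptic curve $E$ and $D$ to be linearly equivalent to a rational multiple of a general fibre of the induced morphism $g\colon X\to E$. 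By the corollary following Lemma \ref{lem:contracted-divisor}, after Stein factorisation $g$ has irreducible fibres.

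The second main step is to combine $g\colon X\to E$ with the Iitaka fibration $f\colon X\to S$. By Lemma \ref{lem:c1c2=c3=0} and Proposition \ref{prop:grassi}, the $j$-invariant of the smooth fibres of $f$ is constant, so the monodromy representation of $R^1 f_\ast\Q$ lands in the finite group $\Aut(E_0)$, where $E_0$ is a typical fibre. In particular the monodromy is finite, with kernel of finite index. I would now split according to whether $g$ factors through $f$ or not. If $g=h\circ f$ for $h\colon S\to E$, the fibration $f$ pulls back, after a finite \'etale base change of $E$, to a fibration whose general fibre is isomorphic to $E_0$ with the same abelian group structure on every fibre --- combined with Proposition \ref{prop:grassi} and Lemma \ref{lem:contracted-divisor} (which forces $D$ to be a multiple of a smooth fibre of $g$), one gets that $R^1 f_\ast\Q$ is trivial over the open locus where the general behaviour prevails. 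If $g$ does not factor through $f$, then $g$ restricted to a general fibre of $f$ is a non-constant map between elliptic curves, hence an isogeny after translation, and this isogeny provides a trivialisation of the local system $R^1 f_\ast\Q$ over some Zariski open of $S$.

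The main obstacle I expect is the second case, where $g$ does not factor through $f$. Turning the pointwise isogeny into a global trivialisation of $R^1 f_\ast\Q$ requires controlling how the isogeny varies with $s\in S$ and ensuring the resulting étale cover of the generic fibre actually comes from an étale cover of an open of $S$; here the constancy of the $j$-invariant (Proposition \ref{prop:grassi}) together with the irreducibility of the fibres of $g$ from the corollary to Lemma \ref{lem:contracted-divisor} should make the rigidification possible, but the bookkeeping between the two fibrations on $X$ is the most delicate point of the argument.
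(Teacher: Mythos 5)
Your proposal has a genuine gap at its first step, and it is precisely the point where the hypothesis of non-trivial monodromy is needed. Lemma \ref{lem:contracted-divisor} requires a morphism $X\to A$ to a torus carrying a one-form $\alpha$ such that $(H^\ast(X,\C),\wedge f^\ast\alpha)$ is exact, \emph{and} it requires the divisor $D$ to be contracted by that morphism. In your setup $D$ is only known to be contracted by the Iitaka fibration $f:X\to S$; there is no reason for $a(D)$ to be a point, so you propose to quotient by the subtorus $B$ generated by $a(D)$. But the specific one-form $\omega$ supplied by condition (\ref{item:exact}) descends to $\Alb(X)/B$ only if it vanishes on $B$, which is not given, and condition (\ref{item:exact}) guarantees exactness only for that particular $\omega$ --- not for an arbitrary form pulled back from a quotient of $\Alb(X)$. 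In the degenerate case where $a(D)$ generates all of $\Alb(X)$ the quotient is trivial and the argument says nothing. So you cannot conclude that $\Alb(X)/B$ is an elliptic curve or that $D$ is a multiple of a fibre, and the whole second step (which you yourself flag as the delicate part, and which would additionally require upgrading finite monodromy to trivial monodromy) has no foundation.

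The paper avoids this by arguing directly rather than by contraposition: non-trivial monodromy forces $R^1f_\ast\Q$ to have no generically nonzero section (a weight-one rank-two VHS with a nonzero section is trivial), hence the general fibre of $f$ is contracted by the Albanese map $a$, hence $a$ factors through $f$ (using that $S$ has rational singularities). Only then does a divisor $D$ contracted by $f$ become a divisor contracted by $a$, so that Lemma \ref{lem:contracted-divisor} applies with the genuine Albanese map and the genuine $\omega$; the conclusion that $\Alb(X)$ is an elliptic curve and $D$ is a rational multiple of a general fibre of $a$ then forces $f$ to factor through $a$, which is absurd since $S$ is a surface. If you want to keep a contrapositive framing, you would still need to reinstate the monodromy hypothesis before invoking Lemma \ref{lem:contracted-divisor}; as written, the reduction to $\Alb(X)/B$ does not preserve the exactness hypothesis that the lemma needs.
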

\begin{proof}
Assume that $f$ has non-trivial monodromy.
Since any variation of Hodge structure of weight one and rank two with a nonzero section is trivial, this implies that $R^1f_\ast \Q$ has no generically non-zero section.
Hence, the general fibre of $f$ is contracted via the Albanese map $a:X\to \Alb(X)$.
This implies that $a$ factors rationally through $f$.
Since $S$ is the base of the Iitaka fibration, it has at most klt singularities, hence rational singularities, and so any rational map from $S$ to $\Alb(X)$ is a morphism.
That is, $a$ factors through $f$.
Hence, any prime divisor $D\subset X$ which maps to a point on $S$ is contracted to a point by the Albanese map of $X$.
Lemma \ref{lem:contracted-divisor} then shows that $\Alb(X)$ is an elliptic curve and $D$ is numerically equivalent to a rational multiple of a fibre of $a$.
Since $D$ is contracted by $f$, $f$ also contracts a general fibre of $a$. 
Hence $f$ factors through $a$, which is impossible because $S$ is a surface.
This proves the lemma.
\end{proof}

\begin{remark}
The condition on the monodromy is necessary in Lemma \ref{lem:nontriv-monodromy}.
To see this, let $S$ be a canonical surface with ample canonical bundle and a single node as singularity, and with minimal resolution $\tilde S\to S$.
Then for any elliptic curve $E$, the product $X:=\tilde S\times E$ is a minimal threefold of Kodaira dimension two which satisfies (\ref{item:exact}), but the Iitaka fibration of $X$ is given by the natural map $X\to S$, which is not equi-dimensional. 
\end{remark}

We are now able to prove the following, which is the main result of Section \ref{subsec:ell3fold:nomonodromy}.

\begin{proposition}\label{prop:monodromy}
Let $f: X\rightarrow S$ be an elliptic threefold, with $X$ smooth and $K_X$ nef.
Assume that $X$ satisfies (\ref{item:exact}).
Then there is a finite \'etale  cover $\tau:\widetilde X\to X$, such that the elliptic fibration $\tilde f:\widetilde X \to \widetilde S$  that is induced by $f$ via Stein factorization has trivial monodromy. 
\end{proposition}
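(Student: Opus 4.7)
The plan is to show that condition (\ref{item:exact}) forces the $j$-invariants of the smooth fibres of $f$ to be constant, so that the monodromy becomes finite, and then to build the required étale cover by matching the ramification of a cover of $S$ with the multiplicities of the singular fibres of $f$.

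First, I would reduce to a standard elliptic fibration $f':X'\to S'$ using Theorem \ref{thm:nakayama}; by Lemma \ref{lem:pi_1(X)}, finite étale covers of $X'_{U'}$ lift uniquely to finite étale covers of $X^{\sm}=X$, so it is enough to work on the standard model. Since $X$ satisfies (\ref{item:exact}), Lemma \ref{lem:c1c2=c3=0} gives $c_1c_2(X)=0$, a property that transfers to the terminal minimal model $X'$. Proposition \ref{prop:grassi} then forces the $j$-invariant of the smooth fibres of $f'$ to be a single constant $j_0$, so that the image $G$ of the monodromy representation $\rho\colon \pi_1(U')\to \operatorname{SL}_2(\Z)$ of $R^1f'_\ast\Q|_{U'}$ lies inside the automorphism group $\Aut(E_{j_0})$ of a fixed elliptic curve with invariant $j_0$, hence is a finite cyclic group of order $2$, $4$, or $6$. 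Lemma \ref{lem:nontriv-monodromy} moreover guarantees that, as long as $G$ is nontrivial, $f'$ is equi-dimensional.

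Next, let $T\to S'$ be the Galois cover with group $G$ that trivializes $\rho$ on $U'$. It is étale over $U'$ but in general ramifies along the discriminant $\Delta\subset S'$. The key structural input is that by Proposition \ref{prop:grassi} the equi-dimensional singular fibres over codimension-one points of $\Delta$ are multiples $m\cdot F_0$ of smooth elliptic curves. A Kummer-type construction then allows one to choose $T\to S'$ so that its ramification indices along the divisorial components of $\Delta$ match the multiplicities $m$, which makes the pullback $X'_T:=X'\times_{S'}T\to X'$ étale along the preimage of the divisorial part of $\Delta$: ramification in the base cancels against multiplicity in the fibre. The finitely many remaining singular fibres of $f'$ that are not multiples of smooth elliptic curves have zero-dimensional image in $S'$, so they sit in codimension $\geq 2$, and any non-étaleness they create is killed by Zariski--Nagata purity of the branch locus on the smooth model.

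Taking the Stein factorization of $X'_T\to T$ yields an elliptic fibration $\tilde f\colon \widetilde X\to \widetilde S$ whose monodromy is trivial over the preimage of $U'$ by construction. Transporting this cover through $X'$ back to $X$ via Lemma \ref{lem:pi_1(X)} produces the required $\tau\colon \widetilde X\to X$. The main obstacle will be the ramification-matching step: ensuring that the orders of $G$ along each component of $\Delta$ divide the corresponding multiplicities $m$ in a compatible way, and handling normalization and resolution of $X'_T$ carefully so that a single global $T\to S'$ works. Lemma \ref{lem:contracted-divisor}, applied to the Albanese of $X'$, should be the main extra tool for taming the ``bad'' non-multiple singular fibres and controlling components of $\Delta$ that contract to points.
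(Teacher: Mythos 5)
Your first half matches the paper: Lemma \ref{lem:nontriv-monodromy} reduces to the case where $f$ is equi-dimensional, and Proposition \ref{prop:grassi} gives generic isotriviality and hence finite monodromy (the paper deduces finiteness from the fact that the identity component $\Aut^0(F)$ acts trivially on $H^1(F,\Q)$, rather than from the order of the automorphism group of a curve with $j$-invariant $j_0$, but this is the same point). The second half, however, has a genuine gap. You propose to trivialize the monodromy by a cover $T\to S'$ that ramifies along the discriminant $\Delta$ and to ``match'' its ramification indices with the multiplicities $m$ of the multiple fibres. There is no reason these two numbers should be related: the multiplicity of an $mI_0$-fibre is independent of the order of the local monodromy of $R^1f_\ast\Q$ around it (indeed that local monodromy is trivial), so the proposed Kummer-type construction cannot be carried out in general; and a ramified base change followed by normalization would in any case not obviously yield an \emph{\'etale} cover of $X$, which is what the statement demands. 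You are conflating the cover that kills the monodromy with the cover that kills the multiple fibres --- the latter is the content of Lemma \ref{lem:multiple-fibres} and Proposition \ref{prop:good-cover}, requires the induced orbifold to be good, and is not what is needed here.

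The observation that repairs this, and which is the actual content of the paper's proof, is that $R^1f_\ast\Q$ is already a local system on an open set $U\subset S$ whose complement is \emph{zero-dimensional}: away from the finitely many fibres that are not multiples of smooth elliptic curves, the singular fibres are of type $mI_0$, their reductions are smooth elliptic curves, and topological proper base change shows the local monodromy there is trivial. Hence the finite cover $\tilde U\to U$ trivializing this local system is honestly \'etale over all of $U$, including over the generic points of $\Delta$, and no ramification matching is needed. Its base change $X\times_S\tilde U\to f^{-1}(U)$ is finite \'etale, and since $f$ is equi-dimensional and $S\setminus U$ is finite, the complement of $f^{-1}(U)$ in the smooth threefold $X$ has codimension at least $2$, so $\pi_1(f^{-1}(U))\cong\pi_1(X)$ and the cover extends to the desired finite \'etale $\tau:\tilde X\to X$. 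Note also that the paper works directly with $f:X\to S$ and does not need to pass through Nakayama's standard model or Lemma \ref{lem:pi_1(X)} for this proposition.
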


\begin{proof}
By Lemma \ref{lem:nontriv-monodromy}, we may assume that $f$ is equi-dimensional.
By Proposition \ref{prop:grassi}, $f$ is generically isotrivial and only finitely many singular fibers of $f$ are not multiples of a smooth elliptic curve. 
This implies that there is an open subset $U\subset S$ whose complement is zero-dimensional, such that $R^1f_\ast \Q|_U$ is a local system  (which above the multiple fibres can be checked via topological base change).
Moreover, this local system has finite monodromy, because the identity component $\Aut^0(F)$ of $\Aut(F)$ acts trivially on $H^1(F,\Q)$.

It follows that there is a finite \'etale cover $\tilde U\to U$ such that the base change $X\times_S\tilde U$ is an elliptic threefold over $\tilde U$ with trivial monodromy.
Note that $X\times_S\tilde U$ is a finite \'etale cover of $f^{-1}(U)$.
Since $f$ is equi-dimensional, $\pi_1(f^{-1}(U))=\pi_1(X)$, and so this finite \'etale cover extends to a finite \'etale cover $\tau:\tilde X\to X$, such that the map $\tilde f:\tilde X\to \tilde S$, induced via Stein factorization of $f\circ \tau:\tilde X\to S$, is an elliptic threefold with trivial monodromy.
This finishes the proof of the proposition.
\end{proof}

\begin{remark} 
The case where $X$ is a product of a curve with a bi-elliptic surfaces shows that the \'etale covering performed in Proposition \ref{prop:monodromy} is really necessary.
\end{remark}

\subsection{Classification of minimal elliptic threefolds with trivial monodromy}

Proposition \ref{prop:monodromy} reduces the proof of Theorem \ref{thm:kod=0,1,2,3} for $\kappa(X)=2$ to the case of minimal elliptic threefolds $X\to S$ with trivial monodromy.
Even though $X$ is smooth in that situation, it is not much harder to classify more generally such threefolds with terminal singularities.
This is the content of the following theorem.
To state the result, recall that a finite morphism $f:X'\to X$ between normal varieties is called quasi-\'etale if it is \'etale in codimension one, see e.g.\ \cite{GKP16}; if $X$ is smooth, then this implies that $f$ is \'etale.
In particular, $f$ is ramified at most at the singular points of $X$.

\begin{theorem} \label{thm:elliptic-threefolds:basic} 
Let $f:X\to S$ be an elliptic threefold with trivial monodromy, where $X$ is terminal, $\Q$-factorial and $K_X$ is nef. 
Then there is a finite quasi-\'etale covering $\tau:X''\to X$ with $X''\cong S''\times E$, where $E$ is an elliptic curve and $S''$ is a smooth projective surface with a generically finite map to $S$.
\end{theorem}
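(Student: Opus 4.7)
The plan is to trivialize the elliptic fibration on the smooth locus via successive finite \'etale covers of the base, and then lift this trivialization to a finite quasi-\'etale cover of $X$ using the comparison of fundamental groups from Lemma \ref{lem:pi_1(X)}. First I would apply Theorem \ref{thm:nakayama} to obtain a standard elliptic fibration $f':X'\to S'$ birational to $f$ over $S$, and by Lemma \ref{lem:pi_1(X)} choose an open $U'\subset S'$ with zero-dimensional complement such that $X'_{U'}$ is smooth and $\pi_1(X'_{U'})\cong\pi_1(X^{\sm})$. Shrinking $U'$ to remove the remaining discriminant locus, I may assume $f'|_{X'_{U'}}$ is a smooth elliptic fibration. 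The triviality of $R^1f_*\Q$ transfers to $R^1f'_*\Q$ on a dense open since $f$ and $f'$ share the same generic fibre, and since polarized weight-one Hodge structures of rank two classify elliptic curves, all fibres over $U'$ (after possibly a finite \'etale base change) are isomorphic to a fixed elliptic curve $E$; that is, the fibration is isotrivial.

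Next, I would classify and trivialize the isotrivial fibration using the split extension $1\to E\to \Aut(E)\to \Aut(E)/E\to 1$. The $\Aut(E)/E$-part is finite and records the finite-order monodromy on $R^1$, so after a further finite \'etale base change the classifying class of $f'|_{X'_{U'}}$ lies in $H^1(U',\underline{E})$; equivalently, $X'_{U'}\to U'$ is a torsor for $U'\times E$. Using that every $E$-torsor is killed by a suitable multiplication-by-$n$ cover, I would pass to a further finite \'etale cover $V\to U'$ for which $X'_{U'}\times_{U'}V\cong V\times E$. Via the isomorphism $\pi_1(X'_{U'})\cong \pi_1(X^{\sm})$ the resulting \'etale cover of $X'_{U'}$ extends to a finite \'etale cover of $X^{\sm}$ and thence to a finite quasi-\'etale cover $\tau:X''\to X$ by taking integral closures. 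Letting $S''$ be a smooth projective model of the normalization of $S$ in the function field of $V$, the map $S''\to S$ is generically finite, and the product structure on a dense open of $X''$ pulled back from $V\times E$ extends to $X''\cong S''\times E$ via the Stein factorization of $X''\to S''\to S$, combined with rigidity of the $E$-action on the generic fibre.

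The main obstacle is ensuring that the global model $X''$ is itself a product of smooth projective varieties rather than just an isotrivial family with possibly singular fibres. Two related difficulties must be addressed: first, the $E$-torsor class has to be killed in a way compatible with a cover branched only in codimension $\geq 2$ on $X$ (to preserve quasi-\'etaleness), in particular at points of $S$ above which $f$ has multiple fibres; second, the isolated terminal singularities of $X$ must be \emph{resolved} by the chosen cover, so that $X''\cong S''\times E$ is actually smooth. Both are handled by combining the finiteness of the local fundamental groups of terminal threefold singularities, the $n$-divisibility of $E$-torsors, and the nefness of $K_X$ (inherited by $K_{X''}$) with the rigidity of minimal models over $S$, which rules out any further correction beyond codimension one and forces the birational identification $X''\cong S''\times E$ to be a global isomorphism.
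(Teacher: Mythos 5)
Your skeleton matches the paper's (Nakayama's standard model via Theorem \ref{thm:nakayama}, the $\pi_1$ comparison of Lemma \ref{lem:pi_1(X)}, trivialization over an open subset of the base, extension to a quasi-\'etale cover via \cite[Theorem 3.8]{GKP16}, and a minimal-model rigidity argument at the end), but there is a genuine gap in the middle step: the treatment of multiple fibres. You propose ``shrinking $U'$ to remove the remaining discriminant locus'' so that $f'|_{X'_{U'}}$ becomes a smooth elliptic fibration, and then killing the resulting $E$-torsor by a finite \'etale cover $V\to U'$. But the discriminant locus of $f'$ --- in particular the locus of multiple fibres --- is in general a \emph{divisor} in $S'$ (already for a bielliptic surface times a curve), whereas Lemma \ref{lem:pi_1(X)} only gives $\pi_1(X'_{U'})\cong\pi_1(X^{\sm})$ when $S'\setminus U'$ is zero-dimensional. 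Once you delete a divisor from the base, that isomorphism fails, and the \'etale cover of the smaller open set obtained by trivializing the torsor will in general be \emph{ramified} along the preimage of the multiple-fibre divisor when extended to $X$; the extension is then not quasi-\'etale. Your closing paragraph acknowledges this difficulty, but the tools you invoke there (local fundamental groups of terminal singularities, $n$-divisibility of torsors) do not control ramification over a divisor.

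The paper resolves exactly this point by a multisection construction, which is the one idea missing from your proposal. Since the monodromy is trivial and $X'$ has rational singularities, Lemma \ref{lem:no-monodromy=>1-form} shows the general fibre of $f'$ is not contracted by the Albanese map; dualizing gives a surjection $g:X'\to E$ that is finite on the fibres of $f'$, so a general fibre $\tilde S=g^{-1}(e)$ is a smooth multisection. Lemma \ref{lem:multiple-fibres} then shows that the normalization of the base change $X'\times_{S'}\tilde S$ is \'etale over $X'_{U'}$ \emph{including over the multiple fibres}, which become reduced (indeed smooth elliptic) upstairs; one never removes more than a zero-dimensional set from $S'$, so the $\pi_1$ comparison survives and \cite[Theorem 3.8]{GKP16} applies. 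A secondary caveat: your claim that ``every $E$-torsor is killed by a multiplication-by-$n$ cover'' requires the torsor class to be torsion, which itself rests on projectivity, i.e.\ on the existence of a multisection --- so even the torsor route secretly needs the multisection the paper constructs explicitly.
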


In the proof of the above theorem, we will use the following local result.

\begin{lemma} \label{lem:multiple-fibres}
Let $f:X\to \Delta$ be a proper morphism of complex manifolds over the disc $\Delta$, which is a submersion over the punctured disc $\Delta^*:=\Delta \setminus \{0\}$.
Assume that the special fibre $X_0$ of $f$ is irreducible and of multiplicity $m$.
Assume that there is a morphism $g:X\to F$ to a compact complex manifold $F$ which restricts to a finite morphism on a general fibre of $f$.
Let $S:=g^{-1}(x)$ be a general fibre of $g$.
Then the normalization $X'$ of the base change $X\times_\Delta S$ is smooth, the family $X'\to S$ has reduced fibres and the natural map $X'\to X$ is \'etale.
If furthermore the reduced fibre $X_0^{\red}$ is smooth, then $X'\to S$ is smooth.
\end{lemma}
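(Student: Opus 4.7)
The plan is a local analysis around the multiple fibre combined with Sard-type arguments to put the slice $S$ into generic position.

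Near any smooth point $p$ of the irreducible reduced divisor $X_0^{\red}$ one can choose local coordinates $(u,v,w)$ on $X$ with $X_0^{\red}=\{u=0\}$ and $f=u^m$. With this local model in hand, the first step is to show that for $x\in F$ in a suitable Zariski open dense subset, the fibre $S=g^{-1}(x)$ is a smooth curve satisfying: (a) $S\cap X_0\subset (X_0^{\red})^{\sm}$; (b) $S$ meets $X_0^{\red}$ transversely at each such point; and (c) the map $\phi:=f\circ\iota_S:S\to\Delta$ is unramified over $\Delta^*$. Smoothness of $S$ and (a) follow from Sard applied to $g$ and a dimension count on the singular locus of $X_0^{\red}$. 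For (b), I first argue that $g|_{X_0^{\red}}$ is dominant by properness: if not, $S$ would miss $X_0$ for general $x$ and then lie in a single smooth fibre $X_t$ with $t\neq 0$, contradicting $\dim S=1$ and finiteness of $g|_{X_t}$. Transversality is equivalent to $g|_{(X_0^{\red})^{\sm}}$ being a submersion at the intersection point, which is Sard applied to $g|_{(X_0^{\red})^{\sm}}$. Item (c) is the analogous genericity statement expressing that $S$ is transverse to every smooth fibre of $f$.

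The second step is the local computation at each $p\in S\cap X_0$. Transversality from (b) means $z:=u|_S$ is a local coordinate on $S$, so $\phi=z^m$ locally, and the base change is cut out by $u^m-z^m=\prod_{k=0}^{m-1}(u-\zeta^k z)$ with $\zeta$ a primitive $m$-th root of unity. Thus $X\times_\Delta S$ locally decomposes into $m$ smooth three-dimensional branches meeting along $\{u=z=0\}$, and the normalization $X'$ separates them. Each branch, parametrized by sending $(v,w,z)$ to $(\zeta^k z,v,w)$ in $X$, gives a local biholomorphism onto a neighborhood of $p$; hence $X'\to X$ is \'etale near the preimage of $p$. The fibre of $X'\to S$ over $z=0$ is locally $\{(v,w,0)\}$, i.e., locally isomorphic to $X_0^{\red}$, so it is reduced; if $X_0^{\red}$ is globally smooth, this fibre is smooth as well.

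To conclude: away from $X_0$, unramifiedness of $\phi$ already makes $X\times_\Delta S$ smooth and \'etale over $X$ (so it equals its own normalization), while near $X_0$ the local model gives the same conclusions. Hence $X'$ is smooth, $X'\to X$ is \'etale, and the fibres of $X'\to S$ are reduced; under the additional hypothesis that $X_0^{\red}$ is smooth, the fibres over $\phi^{-1}(0)$ are smooth too, so $X'\to S$ is smooth. The main obstacle is the genericity step, especially verifying (b) and (c); once these are secured, the lemma reduces to the explicit cyclic-cover calculation $u^m=z^m$.
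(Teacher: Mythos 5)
Your overall strategy is the paper's: use Sard to put $S=g^{-1}(x)$ in general position with respect to $X_0^{\red}$, deduce that $f|_S$ is locally $z\mapsto z^m$ at each point of $S\cap X_0$, and then read off the conclusion from the explicit cyclic model $u^m=z^m$ (the paper cites \cite[Proposition III.9.1]{barth-etal} for this last step instead of writing it out). However, your item (c) is a genuine gap: unramifiedness of $\phi=f|_S$ over $\Delta^*$ is \emph{not} a genericity statement in $x$. The bad set is $g\bigl(\{q\in X\setminus X_0:\ker dg_q\subset\ker df_q\}\bigr)$, i.e.\ the projection to $F$ of the critical values of $(f,g)$; Sard only controls the measure of that set inside $\Delta\times F$, and its projection to $F$ can have positive measure (even nonempty interior). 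Concretely, for $X=\Delta\times\CP^1\to\Delta$ and $g(t,y)=y^2+t$, the curve $g^{-1}(x)\to\Delta$ is ramified over $t=x$ for every $x\in\Delta^*$, so $X\times_\Delta S\to X$ fails to be \'etale over $X_x$ for all $x$ in an open set. The correct fix, and the reason the paper's proof opens with ``it suffices to prove the lemma after shrinking $\Delta$'', is that the critical points of the proper map $\phi:S\to\Delta$ from the smooth curve $S$ form a discrete closed subset of $S$, whose image is therefore discrete in $\Delta$; shrinking $\Delta$ pushes all critical values into $\{0\}$. Without this shrinking the lemma is literally false, so the step cannot be absorbed into the choice of $x$.

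A second, more easily repaired, incompleteness: your local computation is carried out only at points of $X\times_\Delta S$ of the form $(p,p)$ with $p\in S\cap X_0\subset(X_0^{\red})^{\sm}$. But the fibre of $X\times_\Delta S\to X$ over an arbitrary point $\tilde p\in X_0$ (possibly a singular point of $X_0^{\red}$, and in any case not on $S$) is $\{\tilde p\}\times(S\cap X_0)$, so \'etaleness of $X'\to X$ over all of $X_0$, and reducedness (resp.\ smoothness) of the entire central fibre of $X'\to S$, require the analysis at every $(\tilde p,q)$ with $\tilde p\in X_0$ and $q\in S\cap X_0$. The argument does go through there: since $X$ is smooth and $X_0$ is irreducible of multiplicity $m$, one has $f=u_0^m$ near $\tilde p$ for a (possibly non-submersive) local equation $u_0$ of $X_0^{\red}$, while $\phi=z^m$ near $q$ by your transversality claim (b); the branches $\{u_0=\zeta^k z\}$ are graphs over $X$, hence smooth and mapping isomorphically to a neighbourhood of $\tilde p$, and the central fibre of each branch over $S$ is $\{u_0=0\}=X_0^{\red}$ with its reduced structure. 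With these two corrections your proof matches the paper's.
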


\begin{proof}
Since $f$ is a submersion over the punctured disc, it suffices to prove the lemma after shrinking $\Delta$, if necessary.
By Sard's theorem, $S$ is smooth.
Moreover, the natural map $S\to \Delta$ is submersive away from the origin of $\Delta$.
Let $S'$ be a connected component of $S$.
Since $S'$ is connected, up to shrinking $\Delta$, $S'$ meets the central fibre $X_0$ in a single point and so $S'\to \Delta$ is a cyclic cover and we denote its degree by $k$.
Since $X_0$ has multiplicity $m$, $m$ divides $k$.
Conversely, the morphism $X_0^{\red}\to F$ induced by $g$ is generically smooth and so the preimage of $x\in F$ in $X_0^{\red}$ is given by disjoint reduced points.
Since the intersection of $S'$ with $X_0$ is a single point, we find that this intersection has multiplicity at most $m$.
Hence, $k=m$ and $S'\to \Delta$ is a cyclic cover of degree by $m$.
It then follows from a well-known local computation, see e.g.\ \cite[Proposition III.9.1]{barth-etal}, that the normalization $X'$ of the base change $X\times_\Delta S$ is smooth, the family $X'\to S$ has reduced fibres and the natural map $X'\to X$ is \'etale.
Moreover, the central fibre of $X'\to S$ is an \'etale cover of $X_0^{\red}$.
Hence, $X'\to S$  is smooth if $X_0^{\red}$ is smooth.
This concludes the lemma.
\end{proof}

\begin{proof}[Proof of Theorem \ref{thm:elliptic-threefolds:basic}]
By Theorem \ref{thm:nakayama}, there is a birational morphism $S'\to S$ and a standard elliptic fibration $f':X'\to S'$  that is birational to $f$ over $S$ and such that $K_{X'}$ is nef over $S$.
By Lemma \ref{lem:pi_1(X)}, there is a smooth open subset $U'\subset S'$ whose complement is zero-dimensional and such that $X'_{U'}:={f'}^{-1}(U')$ is smooth and the birational map $X'_{U'}\dashrightarrow X$ induces an isomorphism 
\begin{align} \label{eq:pi_1(X)}
\pi_1(X'_{U'})\cong \pi_1(X^{\sm}) .
\end{align} 

Since $X'$ has only terminal singularities, there is a well-defined Albanese map $a':X'\to \Alb(X')$, obtained by observing that the Albanese map of any desingularization of $X'$ factors through $X'$.
By Lemma \ref{lem:no-monodromy=>1-form}, the general fibre of $f':X'\to S'$ is not contracted by $a'$.  
Hence, the general fibre of $f'$ is via $a'$ mapped to a translate of a fixed elliptic curve $E\subset \Alb(X')$.
Since $X'$ is projective, we can dualize this inclusion to get a surjection $\Alb(X')\twoheadrightarrow E$.  
Composing this with $a'$, we get a surjection 
$$
g:X'\longrightarrow E ,
$$
which restricts to finite \'etale covers on general fibres of $f'$.
Taking the Stein factorization, we may assume that $g$ has connected fibres.
(Note that the target of the Stein factorization will receive a surjection from the general fibres of $f'$, which are elliptic curves, and so it cannot be a curve of genus $\geq 2$.)

Since $X'$ is terminal, it has isolated singularities.
A general fibre $\tilde S=g^{-1}(e)$ of $g$ is thus smooth by Bertini's theorem and we consider the normalization $\tilde X$ of the base change $X'\times_{S'}\tilde S$.
We then get a commutative diagram
$$
\xymatrix{ 
\tilde X\ar[r]\ar[d]_{\tilde f} &X'\ar[d]^{f'}\\
\tilde S \ar[r] &S' .} 
$$
Let $\tilde U\subset \tilde S$ be the preimage of $U'\subset S'$.
We consider the base change $\tilde X_{\tilde U}=\tilde f^{-1}(\tilde U)\subset \tilde X$.
Since $f$ has trivial monodromy by assumptions, the same holds for $f'$.
Since additionally $K_{X'}$ is nef over $S'$, the base change $X'_Z$ to a general  hyperplane section $Z\subset S'$ is a smooth minimal elliptic surface with trivial monodromy.
This implies that the singular fibres are multiples of smooth elliptic curves: they are (multiples of) one of the fibres in Kodaira's table \cite[p.\ 201]{barth-etal} and additionally have second Betti number at least $2$, because of the triviality of the monodromy and topological proper base change.
Hence, away from finitely many points in $S'$, all singular fibres of $f'$ are multiples of smooth elliptic curves. 

Let $C'\subset U'$ be a general hyperplane section and let $\tilde C\subset \tilde U$ be its preimage in $\tilde U$.
Applying Lemma \ref{lem:multiple-fibres} to the base change of $\tilde X_{\tilde U}$ and $X'_{U'}$ to $\tilde C$ and $C'$, respectively, we find the following: up to removing finitely many points from $U'$ and $\tilde U$, we may assume that $\tilde X_{\tilde U}\to \tilde U$ is a smooth elliptic fibre bundle and $\tilde X_{\tilde U}\to X'_{U'}$ is \'etale.
Since this bundle has a section by construction, the existence of a fine moduli space for elliptic curves with level structure shows that $\tilde X_{\tilde U}\cong \tilde U\times E$ for an elliptic curve $E$.
 
By \cite[Theorem 3.8]{GKP16}, any finite \'etale cover of $X^{\sm}$ extends to a finite quasi-\'etale cover of $X$.
Since
$
\pi_1(X'_{U'})\cong \pi_1(X^{\sm})
$, the finite \'etale cover $\tilde X_{\tilde U}\to X_{U'}'$ is thus birational to a finite quasi-\'etale covering
$$
X''\to X
$$
of $X$.
Since $\tilde X_{\tilde U}\cong \tilde U\times E$, we conclude that $X''$ is birational to $S''\times E$, where $S''$ is a minimal surface and $E$ is an elliptic curve.
Since $K_X$ is nef, so is $K_{X''}$.
Moreover, $X''$ is terminal by \cite[Proposition 5.20]{kollar-mori}, because it is a finite quasi-\'etale cover of a terminal threefold.

By \cite[Theorem 6.25]{kollar-mori}, there is a $\Q$-factorialization $\sigma:\tilde X''\to X'' $, i.e.\ a proper birational morphism which is an isomorphism in codimension one such that $\tilde X''$ is $\Q$-factorial, terminal and $K_{\tilde X''}$ is nef.
Hence, $\tilde X''$ and $S''\times E$ are birational minimal models and so they are connected by a sequence of flops (see \cite{kol89}). 
Since  $S''\times E$ does not admit any non-trivial flop,
$
\tilde X''\cong S''\times E .
$ 
On the other hand, the product $S''\times E$ does not admit a small contraction to a terminal threefold (because any rational curve on it maps to a point on the second factor and so it sweeps out a divisor on $S''\times E$).
Hence, $X''\cong \tilde X''$ and so
$
X''\cong S''\times E
$,
as we want.
This concludes the proof. 
\end{proof}


\subsection{Proof of Theorem \ref{thm:kod=0,1,2,3} for \texorpdfstring{$\kappa(X)=2$}{kappa=2}}
\label{subsec:kappa=2}

Since $X$ is a smooth projective threefold with $K_X$ nef, the Iitaka fibration $f:X\to S$ is a morphism by the abundance conjecture for threefolds, see \cite{Ka92}.
This endows $X$ with the structure of an elliptic threefold.
By  Proposition \ref{prop:monodromy}, 
there exists a finite \'etale covering $X'\to X$, such that $f$ induces an elliptic fibration $f':X'\to S'$ without monodromy. 
Hence, by Theorem \ref{thm:elliptic-threefolds:basic}, there is a finite \'etale cover $X''\to X'$, such that $X''\cong S''\times E$ for some smooth projective surface $S''$ and an elliptic curve $E$.  
This concludes Theorem \ref{thm:kod=0,1,2,3} if $\kappa(X)=2$. 


\section{Proof of Theorem \texorpdfstring{\ref{thm:kod=0,1,2,3}}{4.1}  for \texorpdfstring{$\kappa(X)=1$}{kappa=1}} \label{sec:kappa=1}

In this section we aim to prove Theorem \ref{thm:kod=0,1,2,3} in the case where $X$ is a minimal smooth projective threefold with $\kappa(X)=1$.
By the abundance conjecture (which is known in dimension three, see \cite{Ka92}), the Iitaka fibration of $K_X$ yields a morphism $f:X\to C$ to a smooth projective curve $C$.
We may thus consider the diagram 
$$
\xymatrix{
   X\ar[d]_-{f} \ar[r]^-a &\Alb(X) \\
  C,}
$$
where $a$ denotes the Albanese morphism.

After collecting some preliminary results in Section \ref{subsec:preliminaries}, we  treat in Sections  \ref{subsec:X_c=curve,phi(F)=surface}, \ref{subsec:X_c=curve,phi(F)=curve} and \ref{subsec:X_c=curve,phi(F)=pt} below the cases where the Albanese image $a(F)\subset \Alb(X)$ of a general fibre $F$ of $f$ has dimension two, one and zero, respectively. 

\subsection{Preliminaries} \label{subsec:preliminaries}

We begin with the following simple observation, cf.\ \cite[Theorem 1.5]{gra94}.

\begin{lemma} \label{lem:Grassi:c1c2=0}
Let $X$ be a smooth projective threefold with Iitaka fibration $f:X\to C$ to a smooth projective curve $C$.
Then, $c_1c_2(X)=0$ if and only if the smooth fibres of $f$ are bi-elliptic or abelian surfaces.
\end{lemma}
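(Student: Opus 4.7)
The plan is to express $c_1c_2(X)$ as a topological invariant of a general smooth fibre of $f$ and then invoke the Enriques--Kodaira classification.

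By the abundance theorem in dimension three \cite{Ka92}, $K_X$ is semi-ample and the Iitaka fibration $f:X\to C$ satisfies $K_X\sim_\Q f^*H$ for some $\Q$-divisor $H$ on $C$ with $\deg H>0$. A general fibre $F$ of $f$ is smooth; its normal bundle $N_{F/X}$ is trivial, so adjunction gives $K_F=K_X|_F\equiv 0$. Hence $F$ is a minimal smooth projective surface with numerically trivial canonical class.

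Next, I would compute $c_1c_2(X)$ by intersecting with a general fibre. The Whitney sum formula applied to
\[
0\longrightarrow T_F\longrightarrow T_X|_F\longrightarrow N_{F/X}\longrightarrow 0
\]
together with $N_{F/X}\cong \OO_F$ yields $c_2(X)|_F=c_2(F)$. Combining this with $c_1(X)=-K_X\sim_\Q -f^*H$ and working in rational cohomology (where $f^*H$ is a positive rational multiple of the class of a general fibre) gives
\[
c_1c_2(X)=-\int_X f^*H\cdot c_2(X)=-\deg(H)\cdot \int_F c_2(F)=-\deg(H)\cdot \chi_{\mathrm{top}}(F).
\]
Since $\deg H>0$, it follows that $c_1c_2(X)=0$ if and only if $\chi_{\mathrm{top}}(F)=0$.

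Finally, the Enriques--Kodaira classification shows that a minimal smooth projective surface $F$ with $K_F\equiv 0$ is either a K3 surface, an Enriques surface, an abelian surface, or a bi-elliptic surface, with topological Euler numbers $24$, $12$, $0$, and $0$ respectively. Hence $\chi_{\mathrm{top}}(F)=0$ precisely when $F$ is abelian or bi-elliptic, and since the topological type of fibres is locally constant over the smooth locus of $f$, this conclusion passes from a general smooth fibre to every smooth fibre. There is no serious obstacle here; the only nontrivial ingredient is the use of abundance in dimension three to guarantee that $K_X$ is $\Q$-linearly pulled back from $C$.
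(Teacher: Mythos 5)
Your proof is correct and takes essentially the same route as the paper's: both reduce $c_1c_2(X)$ to $c_2(F)$ for a general fibre $F$ using that $mK_X$ is numerically a positive multiple of a fibre and that $N_{F/X}$ is trivial, and then invoke the classification of minimal surfaces of Kodaira dimension zero to identify $c_2(F)=0$ with $F$ abelian or bi-elliptic. The paper's version is merely a terser form of the same argument.
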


\begin{proof}
Some multiple $mK_X$ is linearly equivalent to the pullback of an ample divisor on $C$.
Hence, $mK_X$ is numerically equivalent to a positive multiple of a general fibre $F$ of $f$.
Hence, $c_1c_2(X)=0$ if and only if $c_2(X)$ restricts to zero on $F$, which is to say that $c_2(F)=0$ because $F$ has trivial normal bundle.
Since $F$ is a minimal surface of Kodaira dimension zero, $c_2(F)=0$ implies that $F$ is either bi-elliptic or an abelian surface, see e.g.\ \cite[Chapter VI.1]{barth-etal}. 
This concludes the lemma.
\end{proof}

Next, we recall the following definition, see \cite[Definition 1.9]{cape00}.

\begin{definition}
Let $X$ be a smooth projective threefold with a surjective morphism $f: X\rightarrow C$ to a smooth curve $C$. 
A holomorphic two-form $\xi$ on $X$ is vertical (with respect to $f$) 
if the annihilator of $\xi$ on the tangent space $T_{X,x}$ of $X$ at a general point $x\in X$ is tangential to the fibre of $f$ at $x$. 
Equivalently, $\xi$ is vertical if and only if it has trivial image via the natural map $\Omega_X^2\to\Omega^2_{X/C}$. 
\end{definition}

With this terminology, Campana and Peternell proved the following: 

\begin{theorem}[{\cite[Theorem 4.2]{cape00}}]\label{thm:CP}
Let $X$ be a minimal smooth projective threefold with $\kappa(X)=1$. 
Let $f: X\rightarrow C$ be the Iitaka fibration.
Assume that there is a holomorphic two-form $\eta$ on $X$ which is not vertical with respect to $f$.
Then there is a finite morphism $C'\to C$ such that the normalization $X'$ of the base change $X\times_CC'$ splits into a product $X'\cong F\times C'$.
In particular, $f$ is quasi-smooth, i.e.\ all singular fibres of $f$ are multiple fibres.  
\end{theorem}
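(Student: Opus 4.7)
The statement is due to Campana--Peternell \cite{cape00}, and I would reproduce the argument via the variation of Hodge structure along the Iitaka fibration. First, a general smooth fibre $F$ of $f$ is a minimal projective surface with $\kappa(F)=0$, and the non-verticality hypothesis gives $0\neq \eta|_F\in H^0(F,\omega_F)$, so $h^{2,0}(F)\geq 1$. By the Enriques--Kodaira classification, $F$ is either a K3 surface or an abelian surface, and in both cases $h^{2,0}(F)=1$. Let $U\subset C$ be the smooth locus of $f$. The class $[\eta]\in H^2(X,\C)$ restricts over $U$ to a flat section of the local system $R^2 f_*\C$, whose value at every $t\in U$ lies in $H^{2,0}(F_t)$ and is nonzero. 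Since $\dim H^{2,0}(F_t)=1$, this flat section pointwise spans the Hodge line $F^2\subset R^2 f_*\C\otimes\mathcal O_U$, so that $F^2$ is a Gauss--Manin-flat sub-bundle.

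From this I would deduce isotriviality of $f_U$. For K3 fibres this is immediate from Torelli, since the period of a K3 surface is exactly the line $H^{2,0}$ and this line is now constant. For abelian fibres one uses the identification $H^2(F_t)=\wedge^2 H^1(F_t)$ together with the Pl\"ucker embedding: the constant line $H^{2,0}(F_t)=\wedge^2 H^{1,0}(F_t)$ inside $\wedge^2 H^1(F_t)$ uniquely determines the plane $H^{1,0}(F_t)\subset H^1(F_t)$, so the weight-one VHS is constant and all smooth fibres are isomorphic. By Fischer--Grauert \cite{FG}, isotriviality implies that $f_U:X_U\to U$ is an analytic fibre bundle with fibre $F$. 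A finite \'etale cover $U'\to U$ then trivializes the bundle to $X_{U'}\cong F\times U'$: in the K3 case this is because $\Aut(F)$ is discrete and the monodromy fixes $[\eta]$ together with a polarization induced from a projective embedding of $X$, forcing finite monodromy; in the abelian case one has additionally to absorb the resulting torsor structure by a further finite cover.

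It remains to extend $U'\to U$ to a finite cover $C'\to C$ on which $X\times_C C'$, after normalization, becomes the product $F\times C'$. I would work locally around each singular fibre of $f$: since all nearby smooth fibres are isomorphic to $F$, the singular fibre must be irreducible of some multiplicity $m$, with smooth reduction isomorphic to $F$. The local trivialization of $f_U$ together with the projection to $F$ provides the auxiliary map $g:X\to F$ needed to apply Lemma \ref{lem:multiple-fibres}, yielding a local cyclic base change of degree $m$ along which the family becomes smooth with trivial central fibre. Patching, the \'etale cover $U'\to U$ extends to a finite cover $C'\to C$ such that the normalization $X'$ of $X\times_C C'$ is smooth and isomorphic to the product $F\times C'$. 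The quasi-smoothness of $f$ then follows, since every singular fibre of $f$ is a multiple fibre by construction.

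The chief obstacle is the isotriviality step: extracting full constancy of the weight-two VHS from a single flat section of $F^2$. Torelli does this for free in the K3 case, but for abelian fibres one genuinely has to use the Pl\"ucker identification to transport the information from $H^2$ back to $H^1$, which relies crucially on $\dim H^{2,0}=1$. A secondary technical point is ensuring that the local trivialization actually extends across the singular fibres of $f$; here Lemma \ref{lem:multiple-fibres} is tailor-made for the task, once one has verified that nearby smooth fibres are all copies of $F$ (so that the singular fibre is forced to be an irreducible multiple, rather than a reducible or genuinely singular degeneration).
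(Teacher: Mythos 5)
First, note that the paper does not prove this statement: it is quoted from Campana--Peternell \cite[Theorem 4.2]{cape00} without proof, so there is no internal argument to compare against. Judged on its own terms, your reconstruction of the generic part is sound: $\kappa(F)=0$ and $\eta|_F\neq 0$ force $F$ to be K3 or abelian with $h^{2,0}(F)=1$; the class $[\eta]$ gives a flat section of $R^2f_\ast\C$ over the smooth locus $U$ spanning $H^{2,0}(F_t)$ at every $t$, hence the Hodge filtration is flat ($F^1=(F^2)^\perp$ in the K3 case, and the Pl\"ucker argument recovers a flat $H^{1,0}\subset H^1$ in the abelian case); Torelli gives isotriviality, Fischer--Grauert gives a bundle, and the monodromy is finite because it preserves a polarization and the Hodge structure, with the residual $F$-torsor in the abelian case handled over the Stein curve $U'$ or via projectivity as in Proposition \ref{prop:good-cover}.

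The genuine gap is the passage across the singular fibres, which is where the actual content of ``in particular, $f$ is quasi-smooth'' lies. You assert that since all nearby smooth fibres are isomorphic to $F$, the singular fibre must be an irreducible multiple with smooth reduction. This does not follow from isotriviality of $f|_U$: isotrivial degenerations can have reducible special fibres or irreducible fibres with singular reduction (constant-$j$ elliptic fibrations with fibres of type $I_0^*$, $II$, $III$, $IV$ are the basic example, and analogous quotient degenerations exist for abelian and K3 fibres), and relative minimality of $X$ alone does not exclude them. Moreover, Lemma \ref{lem:multiple-fibres} cannot be invoked as you do: its hypotheses already include that the special fibre is an irreducible multiple fibre --- precisely what must be proved --- and it requires the auxiliary map $g:X\to F$ to be defined on all of $X$ over the disc, including the central fibre, whereas your $g$ comes from a trivialization that exists only over the punctured disc. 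Ruling out the bad degenerations is the step where the two-form $\eta$ (beyond its restriction to smooth fibres) and the minimality of $X$ must be used in an essential way, and your sketch supplies no substitute. A secondary omission: to patch the local cyclic covers into a single finite $C'\to C$ you need the induced orbifold on $C$ to be good, which fails for $\PP^1$ with one point, or two points of unequal multiplicity (Theorem \ref{thm:orbifold-cover}), and this case has to be excluded, e.g., via the canonical bundle formula and $\kappa(X)=1$.
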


The multiplicities of the singular fibres define an effective divisor $D$ on $C$ and this divisor defines an orbifold structure on $C$, see e.g.\ \cite[Section 2.1.3]{FM}.
We say that such an orbifold is good, if there is an orbifold \'etale covering with trivial orbifold structure.

\begin{proposition} \label{prop:good-cover}
Let $X$ be a smooth projective threefold with a morphism $f:X\to C$ to a smooth projective curve $C$.
Assume that $f$ is quasi-smooth with typical fibre an abelian surface $F$, i.e.\ the smooth fibres of $f$ are isomorphic to a fixed abelian surface $F$ and the singular fibres are multiples of an abelian surface isogeneous to $F$. 
If the orbifold structure on $C$ that is induced by the multiple fibres of $f$ is good, then there is a finite \'etale covering $\tau:X'\to X$ such that $X'\cong F\times C'$, where $C'$ is a finite cover of $C$. 
\end{proposition}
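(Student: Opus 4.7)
The plan is to use the good orbifold cover to smooth out the multiple fibres of $f$, and then trivialize the resulting smooth isotrivial family of polarized abelian surfaces by a further finite \'etale base change.

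Let $\tau_0 \colon C' \to C$ be an orbifold \'etale cover with trivial orbifold structure; this exists by the good assumption and is totally ramified of order $m_i$ over each point $p_i \in C$ supporting a multiple fibre $m_i F_i$ of $f$, and \'etale elsewhere. Let $\tilde X$ denote the normalization of $X \times_C C'$. A local analytic calculation near each multiple fibre, based on the standard local model of a quasi-smooth isotrivial multiple fibre $m_i F_i$ as a quotient $(F_i \times \Delta)/\mu_{m_i}$ where the cyclic group acts diagonally by translation by an $m_i$-torsion element on $F_i$ and by rotation on $\Delta$, shows that $\tilde X$ is smooth, $\tilde X \to C'$ is smooth, and $\tilde X \to X$ is finite \'etale. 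Over the preimage in $C'$ of the smooth locus of $f$, the family $\tilde X \to C'$ is a fibre bundle with fibre $F$ by Fischer--Grauert. Continuity of the period map of the smooth family $\tilde X \to C'$ into the separated coarse moduli space of abelian surfaces then forces every fibre of $\tilde X \to C'$ to be isomorphic to $F$, so another application of Fischer--Grauert identifies $\tilde X \to C'$ with an analytic fibre bundle with fibre $F$.

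Next I would trivialize this fibre bundle after further finite \'etale base change. Since $\tilde X$ is projective, each fibre carries an induced polarization $\lambda$, so the structure group reduces to $\Aut(F,\lambda) = F \rtimes \Aut_{\mathrm{gr}}(F,\lambda)$, whose discrete part $\Aut_{\mathrm{gr}}(F,\lambda)$ is finite. The monodromy representation $\pi_1(C') \to \Aut_{\mathrm{gr}}(F,\lambda)$ thus has finite image and becomes trivial after a finite \'etale cover $C'' \to C'$; setting $Y := \tilde X \times_{C'} C''$, the family $Y \to C''$ is a principal $F$-bundle (equivalently, all $R^i f_\ast \Z$ are trivial local systems). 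Using the Albanese map $a \colon Y \to \Alb(Y)$, the global invariant cycle theorem provides an injection $H^1(F,\Q) \hookrightarrow H^1(Y,\Q)$; dualizing yields a surjective homomorphism $\Alb(Y) \to F$ of abelian varieties whose composition with $a$ restricts to an isogeny $F \to F$ on each fibre of $Y \to C''$. Pulling back along a further finite \'etale cover $C''' \to C''$ trivializing the (constant) kernel of this isogeny, one obtains a morphism $Y \times_{C''} C''' \to F \times C'''$ which restricts to an isomorphism on each fibre and is therefore itself an isomorphism. Setting $X' := \tilde X \times_{C'} C'''$ then yields the required finite \'etale cover of $X$ with $X' \cong F \times C'''$.

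The main obstacle is verifying that $\tilde X \to C'$ is a smooth isotrivial analytic fibre bundle with typical fibre $F$ (rather than only with fibres isogenous to $F$). This rests on the local analytic model of quasi-smooth isotrivial multiple fibres combined with rigidity of the period map. The final \'etale trivialization of the principal $F$-bundle via the Albanese is then a standard moduli-theoretic argument.
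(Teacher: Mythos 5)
Your proposal is correct and follows essentially the same route as the paper: normalize the base change along the good orbifold cover to produce a smooth family after a finite \'etale cover of $X$, kill the finite monodromy (finiteness coming from the polarization) by a further \'etale base change, and then split off the fibre using the Albanese of the projective total space. One small imprecision in your last step: pulling back along a finite \'etale cover $C'''\to C''$ does not change the fibres, so it cannot turn the fibrewise isogeny $Y_c\to F$ into an isomorphism; what actually happens is that the finite \'etale cover $Y\to F\times C''$ splits as a product $F'\times C'''$ after such a base change (kill the induced homomorphism $\pi_1(C'')\to \Lambda/\Lambda'$), and then $F'\cong Y_c\cong F$ gives the desired conclusion.
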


\begin{proof}
If the given orbifold structure is good, then there is a branched cover $C'\to C$, branched exactly at the orbifold points of $C$ with the prescribed multiplicities.
The base change $X\times_CC'$ is singular along the preimage of the singular fibres of $f$.
Let $X'$ be the normalization of $X\times_CC'$.
Then a local analysis shows that $X'$ is smooth, the natural map $\tau:X'\to X$ is \'etale and $X'\to C'$ is smooth. 
Since $X'$ is projective, the same argument as in the proof of \cite[Theorem 4.2]{cape00} shows that up to a further \'etale  base change $X'\cong F\times C'$, as we want.\footnote{Note that in \textit{loc. cit.} it is claimed that this splitting holds even in the K\"ahler setting, but this seems to be incorrect, because even after \'etale cover, the extension (\ref{ses:AlbX'}) does in the non-polarized setting in general not split.}
We repeat the argument for convenience of the reader.
Consider the natural map $f':X'\to C'$ and note that $R^1f'_\ast \Q$ is a local system.
Since $\Aut(F)$ acts on $H^1(F,\Q)$ via a finite quotient, $R^1f'_\ast \Q$ has finite monodromy.
Hence, after a suitable \'etale base change, we may assume that $R^1f'_\ast \Q$ is trivial and so $b_1(X')=b_1(C')+b_1(F)$.
We thus get a short exact sequence
\begin{align} \label{ses:AlbX'}
0\to F\to \Alb(X')\to \Jac(C')\to 0.
\end{align}
Since $X'$ is projective, this sequence splits after a suitable \'etale cover of $X'$ and so $X'\cong F\times C'$, as we want.
This concludes the proposition.
\end{proof}

We finally recall the classification of all good orbifolds, see e.g.\ \cite[Corollary 2.29]{CHK00}.

\begin{theorem} \label{thm:orbifold-cover}
Let $C$ be a smooth projective curve with an effective divisor $D\in \Div(C)$.
The orbifold $(C,D)$ is good unless $C\cong \CP^1$ and one of the following holds:
\begin{enumerate}[(1)]
\item $D$ consists of one point with some multiplicity;
\item $D$ consists of two points with different multiplicities. 
\end{enumerate}
\end{theorem}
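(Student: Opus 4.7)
The plan is to translate ``good'' into an algebraic condition on the orbifold fundamental group, and then split cases by the orbifold Euler characteristic. Writing $D = \sum_{j=1}^n m_j [p_j]$, pick a base point in $C \setminus \supp(D)$ and simple loops $\gamma_j$ around $p_j$; the standard presentation reads
$$
\pi_1^{\mathrm{orb}}(C,D) = \left\langle a_1, b_1, \ldots, a_g, b_g, \gamma_1, \ldots, \gamma_n \;\middle|\; \prod_i [a_i,b_i] \cdot \prod_j \gamma_j = 1,\ \gamma_j^{m_j} = 1 \right\rangle.
$$
I would use that $(C,D)$ is good if and only if this group admits a torsion-free subgroup of finite index, equivalently a surjection onto a finite group under which $\gamma_j$ has order exactly $m_j$ for each $j$; indeed, such a surjection produces a ramified cover $C' \to C$ with ramification index exactly $m_j$ over each $p_j$, i.e.\ an orbifold \'etale cover trivializing the orbifold structure.

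For the two exceptional cases the claim would follow by direct computation. If $(C,D) = (\CP^1, m[p])$, the relation $\prod_j \gamma_j = 1$ reduces to $\gamma_1 = 1$, so $\pi_1^{\mathrm{orb}} = 1$ and no surjection realizes $\gamma_1$ with order $m > 1$. If $(C,D) = (\CP^1, m[p_1] + n[p_2])$ with $m \neq n$, the relations $\gamma_1\gamma_2 = 1$ and $\gamma_j^{m_j} = 1$ collapse the group to $\Z/\gcd(m,n)\Z$, which contains no element of order $\mathrm{lcm}(m,n)$, so the required surjection cannot exist.

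For every remaining case I would invoke the uniformization theorem for $2$-orbifolds, organized by the sign of $\chi^{\mathrm{orb}}(C,D) = 2-2g - \sum_j(1 - 1/m_j)$. If $\chi^{\mathrm{orb}} > 0$, then $(C,D)$ is either $\CP^1$ itself (trivially good) or a spherical orbifold $S^2/\Gamma$ with $\Gamma \subset \mathrm{SO}(3)$ a finite cyclic, dihedral, or platonic group, and $S^2 \to S^2/\Gamma$ is the desired cover. If $\chi^{\mathrm{orb}} = 0$, the orbifold is the quotient $\C/\Gamma$ of the plane by a Bieberbach group, and passing to the translation sublattice of $\Gamma$ produces a torus as good cover. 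If $\chi^{\mathrm{orb}} < 0$, the orbifold is uniformized by $\HH/\Gamma$ for a Fuchsian group $\Gamma \subset \mathrm{PSL}_2(\R)$; since $\Gamma$ is finitely generated and linear, Selberg's lemma produces a torsion-free subgroup of finite index, whose associated cover is good. I expect the main obstacle to be the hyperbolic case: the existence of the finite torsion-free subgroup is non-constructive and genuinely relies on residual finiteness of finitely generated linear groups via Selberg's lemma, rather than on any explicit construction on the curve.
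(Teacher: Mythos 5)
The paper offers no proof of this statement: it is recalled from the literature, with the citation \cite[Corollary 2.29]{CHK00} standing in for an argument, so there is no in-paper proof to measure yours against. Your sketch is the standard proof of that classification and is essentially correct. The reduction of ``good'' to the existence of a finite quotient of $\pi_1^{\mathrm{orb}}(C,D)$ in which each $\gamma_j$ has order exactly $m_j$ is the right criterion: by the Riemann existence theorem such a quotient produces a branched cover $C'\to C$ with ramification index exactly $m_j$ over $p_j$, which is precisely the finite orbifold \'etale cover with trivial orbifold structure that the paper needs when it applies the theorem in Proposition \ref{prop:good-cover}. The two exceptional computations are correct, though in case (2) the cleaner observation is that the relation $\gamma_1\gamma_2=1$ forces $\gamma_1$ and $\gamma_2$ to have the same order in any quotient, so that order cannot be simultaneously $m$ and $n$ with $m\neq n$; the detour through $\Z/\gcd(m,n)$ and $\mathrm{lcm}(m,n)$ obscures this. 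You should also note that points of multiplicity $1$ in $D$ impose no condition, so the statement implicitly concerns only the points of multiplicity at least $2$. The one place where your argument leans hardest on external input is the appeal to ``the uniformization theorem for $2$-orbifolds'' in the form ``every closed $2$-orbifold other than a teardrop or asymmetric spindle is geometric'' --- stated that way it nearly contains the theorem you are proving, so for a self-contained treatment you would want to exhibit the geometric structures directly (finite subgroups of $\mathrm{SO}(3)$ for $\chi^{\mathrm{orb}}>0$, wallpaper groups for $\chi^{\mathrm{orb}}=0$, Poincar\'e's polygon theorem plus Selberg's lemma for $\chi^{\mathrm{orb}}<0$); this is in substance what \cite{CHK00} do.
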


\subsection{The general fibre of $f$ is not contracted via the Albanese map}
\label{subsec:X_c=curve,phi(F)=surface}

By Lemmas \ref{lem:c1c2=c3=0} and \ref{lem:Grassi:c1c2=0}, a general fiber $F$ of $f$ is an abelian or a bi-elliptic surface. 
Since $\dim(a(F))=2$ in the present case, $F$ must be an abelian surface.
In particular, $a(F)\subset \Alb(X)$ is the translate of a fixed abelian subvariety of $\Alb(X)$.
This implies that the pullback of a general holomorphic two-form from $\Alb(X)$ to $X$ is not vertical with respect to $f$.
Hence, Theorem \ref{thm:CP} applies and we see that $f$ is quasi-smooth.
The multiplicities of the singular fibres define an effective divisor $D$ on $C$ and this divisor defines an orbifold structure on $C$, see e.g.\ \cite[Section 2.1.3]{FM}.
We claim that this orbifold structure is good, i.e.\ there is an orbifold \'etale covering $C'\to C$ which has trivial orbifold structure.  
To this end, we may by Theorem \ref{thm:orbifold-cover} assume  $g(C)=0$.
Then the Leray spectral sequence yields $H^1(X,\mathcal O_X)\cong H^0(C,R^1f_\ast \mathcal O_X)$.
This implies $b_1(X)=4$, because a holomorphic one-form (hence also an anti-holomorphic one-form) on $X$ which vanishes on one smooth fibre of $f$ vanishes at any smooth fibre, since the image of these fibres in $\Alb(X)$ are translates of the same abelian subvariety.
Hence, $a:X\to \Alb(X)$ induces an isogeny on the general fibre of $f$.
For a general point $p\in \Alb(X)$, the preimage $C':=a^{-1}(p)$ is a smooth projective curve and the natural map $C'\to C$ is branched exactly at the given points of the orbifold structure on $C$.
Hence, the orbifold structure is good, as we want.
The proof of Theorem \ref{thm:kod=0,1,2,3} in the case treated in Section \ref{subsec:X_c=curve,phi(F)=surface} thus follows from Proposition \ref{prop:good-cover}.

\subsection{The general fibre of $f$ is via the Albanese map contracted to a curve} \label{subsec:X_c=curve,phi(F)=curve}

Recall that the smooth fibres of $f$ are either bi-elliptic or abelian surfaces by Lemmas \ref{lem:c1c2=c3=0} and \ref{lem:Grassi:c1c2=0}.
In the present case, this implies that the fibres of $f$ are mapped via the Albanese morphism $a:X\to \Alb(X)$ to translates of a fixed elliptic curve $E$ in $\Alb(X)$.
Since $X$ is projective, we can dualize the inclusion $E\subset \Alb(X)$ and get a surjection $\Alb(X)\to E$.
This gives rise to a morphism 
$$
g:X\longrightarrow E,
$$ 
which when restricted to the fibres of $f$ coincides with $a$ up to isogeny.
Taking the Stein factorization, we may assume that $g$ has connected fibres.
(Note that the target of the Stein factorization receives a surjection from the fibres of $f$, which are bi-elliptic or abelian surfaces, and so it cannot be a curve of genus $\geq 2$.)

We proceed in several steps.

\medskip
\textbf{Step 1.}
Up to replacing $X$ by a finite  \'etale covering that is induced by a finite \'etale cover of $E$, we may assume that there is a finite \'etale cover $\tilde E\to E$ such that for general $c\in C$,
\begin{align} \label{eq:X'_c:2}
X_c\cong \tilde E\times F_c
\end{align}
for an elliptic curve $F_c$ which might depend on $c\in C$ and such that $g|_{X_c}:X_c\to E$ corresponds to the natural composition $\tilde E\times F_c\to\tilde E\to E$.

\begin{proof}
Consider the restriction $g|_{X_c}:X_c\to E$ of $g$ to a smooth fibre $X_c=f^{-1}(c)$ of $f$.
Even though $g$ has connected fibres, this might a priori not have connected fibres, but taking the Stein factorization, we see that there is a finite \'etale cover $\tilde E_c\to E$, such that $g|_{X_c}$ factors through a morphism $\tilde g_c:X_c\to \tilde E_c$ with connected fibres.
Since $X_c$ is bi-elliptic or abelian, there is a finite \'etale cover $\tilde E'_c\to \tilde E_c$, such that the induced \'etale cover $X_c'\to X_c$ splits into a product 
\begin{align} \label{eq:X'_c}
X'_c=\tilde E'_c\times F_c,
\end{align}
where $F_c$ is an elliptic curve which might depend on $c\in C$.

Note that $\tilde E'_c\to E$ is an \'etale cover and so it is determined by a finite index subgroup of $\pi_1(E)=\Z^{2}$.
In particular, there is a finite \'etale cover $\tilde E'\to E$ which is isomorphic to $\tilde E'_c\to E$ for all general points $c\in C$.
Similarly, there is a finite \'etale covering $\tilde E\to E$ which is isomorphic to  $\tilde E_c\to E$ for general $c\in C$.

\begin{lemma}
Up to replacing $\tilde E'$ by a further finite \'etale cover, there is a finite \'etale cover $E'\to E$, such that  $\tilde E\times_EE' \to \tilde E$ is isomorphic to $\tilde E'\to \tilde E$.
That is, there is a Cartesian diagram
$$
\xymatrix{
\tilde E'\ar[r]\ar[d] &  E'\ar[d]\\
\tilde E\ar[r]& E .
}
$$
\end{lemma}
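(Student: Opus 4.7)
The plan is to translate the lemma into an algebraic statement about finite-index sublattices of $\pi_1(E) \cong \Z^2$ and solve it by a direct lattice construction. Writing $\Lambda := \pi_1(E)$, $H := \pi_1(\tilde E)$, and $K := \pi_1(\tilde E')$, so that $K \subset H \subset \Lambda$ are finite-index sublattices, a finite \'etale cover $E' \to E$ corresponds to a finite-index sublattice $N \subset \Lambda$, and a further \'etale cover $\tilde E'' \to \tilde E'$ corresponds to some $K'' \subset K$ of finite index. By the Galois correspondence for connected \'etale covers of abelian varieties, the existence of the Cartesian diagram (with $\tilde E''$ replacing $\tilde E'$) is equivalent to finding $N$ and $K''$ satisfying $H + N = \Lambda$ (so that $\tilde E \times_E E'$ is connected) and $H \cap N = K''$ (identifying the fiber product with $\tilde E''$ as a cover of $\tilde E$). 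Together, these say exactly that the short exact sequence
\[
0 \to H/K'' \to \Lambda/K'' \to \Lambda/H \to 0
\]
admits a splitting, with $N/K''$ realizing a complementary summand to $H/K''$ inside $\Lambda/K''$.

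I would then construct $N$ and $K''$ using the elementary divisor theorem. First I would fix a $\Z$-basis of $\Lambda$ adapted to $H$, writing $\Lambda = \Z e_1 \oplus \Z e_2$ with $H = d_1 \Z e_1 \oplus d_2 \Z e_2$ for positive integers $d_1 \mid d_2$, and similarly adapt the resulting basis to $K$. A splitting of the sequence above is then sought by choosing $K''$ sufficiently divisible inside $K$, so that the quotient $\Lambda/K''$ decomposes in a way compatible with the projection to $\Lambda/H$; once such a splitting is exhibited, it lifts back to $\Lambda$ and yields the desired sublattice $N$, and the corresponding finite \'etale cover $E' \to E$ fits into the Cartesian square by construction.

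The main obstacle is the algebraic splitting question: for a general pair $K \subset H$, the sequence displayed above does not split with the given $K$, and one must replace $K$ by a smaller $K''$ in order to kill the obstruction class, which lives in $\Ext^1(\Lambda/H, H/K'')$. The key technical point is to show that such a $K''$ always exists — essentially by enlarging the target group $H/K''$ enough that any obstruction becomes trivial, for instance by taking $K''$ to contain a sublattice $m\Lambda$ with $m$ divisible by a large power of every prime dividing $|\Lambda/H|$. Exhibiting $K''$ explicitly through the elementary divisor basis and verifying the required splitting is the part of the argument that I expect to demand the most careful bookkeeping.
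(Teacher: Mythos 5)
Your reduction of the Cartesian condition to the pair of lattice conditions $H+N=\Lambda$ and $H\cap N=K''$, and then to the splitting of $0\to H/K''\to\Lambda/K''\to\Lambda/H\to 0$, is correct and in fact more precise than the paper's own formulation. The gap is in the final step: shrinking $K''$ can never kill the obstruction. For $K''\subset K$ the surjection $H/K''\to H/K$ pushes the extension $0\to H/K''\to\Lambda/K''\to\Lambda/H\to 0$ forward to the extension $0\to H/K\to\Lambda/K\to\Lambda/H\to 0$ (the pushout of $\Lambda/K''$ along $H/K''\to H/K$ is exactly $\Lambda/K$), so the induced map $\Ext^1(\Lambda/H,H/K'')\to\Ext^1(\Lambda/H,H/K)$ carries your obstruction class to the one for $K$. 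Hence if the sequence fails to split for $K$ it fails for every $K''\subset K$: ``enlarging the target group'' goes in the wrong direction. And the obstruction can genuinely be nonzero: take $\Lambda=\Z e_1\oplus\Z e_2$, $H=\Z e_1\oplus 2\Z e_2$, $K=2H=2\Z e_1\oplus 4\Z e_2$. Any $N$ with $H+N=\Lambda$ contains some $w=ae_1+ue_2$ with $u$ odd, and then $2w=2ae_1+2ue_2$ lies in $H\cap N$ but not in $K$, so $H\cap N$ is never contained in $K$, let alone equal to a sublattice $K''$ of $K$. (Equivalently, $e_2$ has order $4$ in $\Lambda/K\cong\Z/2\oplus\Z/4$ and no element of the coset $e_2+H/K$ has order $2$.)

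For comparison, the paper argues differently: it writes the inclusions $K\subset H\subset\Lambda$ as integer matrices $A,B$ and lets $\pi_1(E')$ be the image of $C=nBAB^{-1}$ with $n$ divisible enough that $C$ is integral. In the example above this gives $N=2n\Lambda$, for which $H+N=H\neq\Lambda$, so $\tilde E\times_E E'$ is disconnected and the square is not literally Cartesian either; your sharper reading of the statement exposes that it is too strong as written. What the matrix construction does secure, and what the subsequent base-change argument actually uses, is only that every connected component of $\tilde E\times_E E'$ is a cover of $\tilde E$ factoring through $\tilde E'\to\tilde E$, i.e.\ $H\cap N\subset K$. That weaker statement is easy and needs no splitting at all: take $E'\to E$ to correspond to $m\Lambda$ with $m$ the exponent of $\Lambda/K$, and accept that $X\times_E E'$ may be disconnected, replacing it by a connected component. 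I would redirect your write-up toward proving and using that weaker assertion rather than trying to split the extension.
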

\begin{proof}
Consider the sequence of fundamental groups $\pi_1(\tilde E')\to \pi_1(\tilde E)\to \pi_1(E)$.
Identifying all three groups with $\Z^2$, this sequence is of the form
$$
\Z^2\stackrel{A}\longrightarrow \Z^2\stackrel{B}\longrightarrow \Z^2 ,
$$
where $A,B\in \GL_2(\Q)$ are invertible matrices with integer entries.
Since $E'$ is determined by a finite index subgroup of $\pi_1(E)$, the lemma is then equivalent to finding an invertible matrix $C\in \GL_2(\Q)$ with integer coefficients, such that 
$$
BA=CB;
$$ 
the \'etale cover $E'\to E$ is then induced by the subgroup of $\pi_1(E)\cong \Z^2$ given by the image of $C:\Z^2\to \Z^2$.
The above condition is equivalent to asking that $C=BAB^{-1}$ has integer entries.
But this can easily be ensured by multiplying $A$ by a sufficiently divisible integer, which corresponds to replacing $\tilde E'$ by a further finite \'etale cover.
This proves the lemma.
\end{proof}

Let us now consider the finite \'etale cover $X':=X\times_EE'$ of $X$ together with the natural maps $g':X'\to E'$ and $f':X'\to C$.
The fibre of $f'$ above a general point $c\in C$ is given by the fibre product 
$$
X_c\times_EE'=(X_c\times_{\tilde E_c}\tilde E_c)\times_EE' \cong X_c\times_{\tilde E_c}(\tilde E\times_{E}E') \cong X_c\times _{\tilde E_c}\tilde E_c' ,
$$
which coincides with $X'_c$ from above and so it splits as in (\ref{eq:X'_c}) into a product $\tilde E'\times F_c$ of elliptic curves. 
Hence, up to replacing $X$ by $X'$, we may assume that for general $c\in C$, 
$
X_c\cong \tilde E\times F_c
$
for a finite  \'etale cover $\tilde E\to E$ and for an elliptic curve $F_c$ which might depend on $c\in C$.
That is, (\ref{eq:X'_c:2}) holds, which concludes step 1.
\end{proof}

Note that $\tilde E\to E$ is an isomorphism if and only if $f\times g:X\to C\times E$ has connected fibres.
We show next that, up to a suitable base change, we may assume that this holds true.

\medskip

\textbf{Step 2.}
In the above notation, up to replacing $X$ by a finite  \'etale cover, given by the normalization of $X\times_CC'$ for some finite map $C'\to C$, we may assume that $f\times g:X\to C\times E$ has connected fibres, i.e.\ $\tilde E\to E$ in step 1 is an isomorphism.
\begin{proof}
To prove the claim, let $X\stackrel{h}\longrightarrow S\longrightarrow C\times E$ be the Stein factorization of $f\times g$.
We claim that $S$ is smooth and the natural map $S\to C$ is a minimal elliptic surface all of whose singular fibres are multiples of an elliptic curve.

To see this, note that $S$ is normal.
By (\ref{eq:X'_c:2}), the general fibre $S_c$ of $S\to C$ is isomorphic to
$$
S_c\cong \tilde E.
$$
That is, $S\to C$ is a normal elliptic surface whose general fibres are isomorphic to $\tilde E$. 
Since $X\to C$ has connected fibres, the fibres of the natural map $S\to C$ are connected and their reductions are finite covers of $E$.
Since the arithmetic genus is constant in flat families, all fibres of $S\to C$ must be irreducible.
Let $\tilde S\to S$ be a minimal resolution. 
Since $S\to C$ has connected fibres, the induced map $\tilde S\to C$ has connected fibres as well.
Since $\tilde S\to S$ is the minimal resolution and no fibre of $S\to C$ has a rational component, it follows that $\tilde S\to C$ is relatively minimal elliptic surface.
Since the reductions of the fibres of $S\to C$ are finite (\'etale) covers of $E$, we find that all singular fibres of $\tilde S\to C$ are multiple fibres (by Kodaira's table of singular fibres of relatively minimal elliptic fibrations, see e.g.\ \cite{barth-etal}).
But this implies that $\tilde S\to S$ is an isomorphism, as we want.

%
Since $S\to C$ is a minimal elliptic surface whose general fibres are isomorphic to $\tilde E$ and whose singular fibres are multiples of an elliptic curve, there is a finite \'etale cover $S'\to S$, given by the normalization of $S\times_CC'$ for some finite morphism $C'\to C$, such that 
\begin{align} \label{eq:tildeS'}
 S'\cong C'\times \tilde E .
\end{align}

The  cover $ S'\to S$ induces via $ X\to S$ a finite \'etale cover $ X'\to X$.
Then we get the following commutative diagram
$$
\xymatrix{
X'\ar[d]\ar[r]^-{h'} & S'\cong C'\times \tilde E \ar[r]\ar[d] & C'\times E\ar[d] \\
X\ar[r]^{h} & S \ar[r]& C\times E,
}
$$
where the composition of the lower horizontal arrow is $f\times g$ and $C'\times \tilde E\to C' \times E$ is the product of $\id_{C'}$ and the \'etale cover $\tilde E\to E$.
Since $h$ has connected fibres by construction, $h'$ has also connected fibres and so the Stein factorization of the natural map $X'\to E$ is given by the composition of $h'$ with the second projection $C'\times \tilde E\to \tilde E$. 
Hence, up to replacing $X$ by $X'$, $\tilde E\to E$ must be an isomorphism, as we want.
This concludes step 2.
\end{proof}

By steps 1 and 2, we may from now on assume that for general $c\in C$
\begin{align} \label{eq:X'_c:3}
X_c\cong E\times F_c ,
\end{align}
where $F_c$ is an elliptic curve that might depend on $c$ and the restriction of $g:X\to E$ to $X_c$ corresponds via the isomorphism in (\ref{eq:X'_c:3}) to the first projection $E\times F_c\to E$.

\medskip

\textbf{Step 3.}
$X$ is birational to $E\times S$ for a minimal surface $S$.

\begin{proof}
By (\ref{eq:X'_c:3}), we have for general $c\in C$ a splitting $X_c\cong E\times F_c$ for an elliptic curve $F_c$ which might depend on $c$.
To prove the claim in step 3, we need to show that such a splitting holds not only for a general point $c\in C$ but also for the generic point $\eta=\Spec \C(C)$ of $C$.
That is, we need to show that the generic fibre $X_{\eta}$ of $X\to C$ splits into a product of $E_{\C(C)}=E\times_\C \C(C)$ with an elliptic curve over $\C(C)$.

It is well-known that the splitting in (\ref{eq:X'_c:3}) implies that $X_{\eta}$  splits over the algebraic closure $\overline{\C(C)}$ into a product of $E_{\overline {\C(C)}}$ with an elliptic curve over $\overline{\C(C)}$.
The latter must be defined over $\C(C')$ for some finite cover $C'\to C$.
That is, there is a Zariski open non-empty subset $U\subset C$ and a finite Galois cover $U'\to U$ such that $X\times_CU'$ splits into $E\times T$, where $T$ is an elliptic surface over $U'$.
The base change $X\times_CU'$ fits into a diagram
$$
\xymatrix{
E\times T\cong X\times_CU'\ar[d]\ar[r] & E\times U'\ar[r]\ar[d] & U'\ar[d] \\
X\times_CU\ar[r] & E\times U \ar[r]& U  .
}
$$
Here the outer square is Cartesian.
The square on the right is Cartesian as well and so it follows that the square on the left must be Cartesian as well.
By assumptions, $U'\to U$ is Galois and we denote its Galois group by $G$.
Then, $G$ acts faithfully on $U'$ with $U'/G=U$.
By base change, $G$ also acts on $E\times U'$ and $X\times_CU'$ with quotients $E\times U$ and $X\times_CU$, respectively.
Moreover, the upper horizontal arrows in the above diagram are $G$-equivariant.
In particular, the map $E\times T\to E\times U'$ is $G$-equivariant and $G$ acts trivially on the first factor of $E\times U'$.
This implies that $G$ acts trivially on the first factor of $E\times T$ and so 
$$
X\times_CU\cong (E\times T)/G=E\times (T/G),
$$
where $T/G$ denotes the quotient of $T$ by the induced action of $G$.
This concludes step 3.
\end{proof}

By step 3, there is a minimal surface $S$ such that $X$ is birational to $E\times S$. 
Since $X$ has positive Kodaira dimension, the same holds for $S$ and so $K_S$ is nef.
Hence, $X$ and $E\times S$ are birational minimal models, 
and so they are connected by a sequence of flops, see \cite{kol89}.
Since $E\times S$ does not admit any non-trivial flop, $X\cong E\times S$, as we want.
This concludes Theorem \ref{thm:kod=0,1,2,3} in the case treated in Section \ref{subsec:X_c=curve,phi(F)=curve}.

\subsection{The general fibre of $f$ is via the Albanese map contracted to a point}
\label{subsec:X_c=curve,phi(F)=pt}

Since in the present case, the general fibre of $f:X\to C$ is contracted via $a$ to a point, every fibre must be contracted to a point and so the 
 Albanese map factors as $f:X\to C \to \Jac(C)\cong \Alb(X)$ .
Since $X$ carries a holomorphic one-form $\omega$ such that $(H^\ast(X,\C),\wedge \omega)$ is exact, $b_1(X)\neq 0$ and so $g(C)\geq 1$.
Moreover, $\omega$ is a pullback of a one-form from $C$ and so exactness of $\wedge \omega$ shows that $g(C)=1$.
Hence, $C\cong \Jac(C)\cong \Alb(X)$. 

By Lemmas \ref{lem:c1c2=c3=0} and \ref{lem:Grassi:c1c2=0}, the general fibre of $f$ is either an abelian surface or a bi-elliptic surface.

\textbf{Case 1.} $h^{2,0}(X)\neq 0$.

In this case, $X$ carries a nontrivial holomorphic two-form $\xi$.
Since $h^{1,0}(X)=1$, exactness of $(H^\ast(X,\C),\wedge \omega)$ shows that $\xi\wedge \omega\neq 0$.
Since $\omega$ is the pullback of a one-form on $C\cong \Alb(X)$, the condition $\xi\wedge \omega\neq 0$ then implies that $\xi$ is not vertical.
Hence, by Theorem \ref{thm:CP}, $f:X\to C$ is quasi-smooth with typical fibre $F$, a bi-elliptic or an abelian surface.
Since $\xi$ is not vertical with respect to $f$, it restricts to a nonzero form on the general fibre of $f$ and so $F$ must be an abelian surface.
The multiple fibres of $f$ give rise to an orbifold structure on $C$ which is good because $g(C)\geq 1$, see Theorem \ref{thm:orbifold-cover}.
Hence, Proposition \ref{prop:good-cover} shows that $X$ splits into a product after a finite \'etale cover, as we want. 
This proves Theorem \ref{thm:kod=0,1,2,3} in the case treated in Section \ref{subsec:X_c=curve,phi(F)=pt} if $h^{2,0}(X)\neq 0$.

\textbf{Case 2.} $h^{2,0}(X)=0$ and the general fibre of $f$ is an abelian surface.

In this case, consider the sheaf $f_\ast \omega_X$.
By Koll\'ar's theorem \cite[Theorem I.2.1]{kol86}, this is locally free of rank one, i.e.\ a line bundle on $C$.
Since $h^{3,1}(X)=h^{2,0}(X)$ vanishes, the Leray spectral sequence shows that
$$
H^1(C,f_\ast \omega_X)=0 .
$$
Moreover, $H^0(C,f_\ast \omega_X)=0$, as otherwise we get $h^{3,0}(X)\neq 0$, which is impossible because $h^{2,0}(X)=0$ and $\wedge \omega$ is exact on cohomology.
It follows that the line bundle $f_\ast \omega_X$ has no cohomology and so Riemann--Roch implies that it has degree zero.
That is, $f_\ast \omega_X\in \Pic^0(C)$.

Since $h^{3,0}(X)=0$, the cohomology support locus
$$
\{L\in \Pic^0(C)\mid H^0(X,\omega_X\otimes f^\ast L)\neq 0\}
$$ 
is a union of proper subtori of $\Pic^0(C)$, translated by torsion points \cite{Si}.
Since $\Pic^0(C)$ is an elliptic curve, the above set is in fact a union of torsion line bundles.
This set contains $L:=(f_\ast \omega_X)^{-1}\in \Pic^0(C)$, because
$$
H^0(X,\omega_X\otimes f^\ast L)=H^0(C,f_\ast \omega_X\otimes L)=H^0(C,\mathcal O_C)\neq 0 .
$$
Hence, $f_\ast \omega_X \in \Pic^0(C)$ is a torsion line bundle. 
But then up to passing from $X$ to a suitable  \'etale covering (induced by a \'etale covering of $C$), we may (by flat base change) assume that $f_\ast \omega_X$ is trivial.
This implies $h^{3,0}(X)\neq 0$ and so $h^{2,0}(X)\neq 0$ by exactness of $(H^\ast(X,\C),\wedge \omega)$.
Hence, we may conclude via Case 1.

\textbf{Case 3.} $h^{2,0}(X)=0$ and the general fibre of $f$ is a bi-elliptic surface.

In this case, $R^1f_\ast\omega_{X}=R^1f_\ast\omega_{X/C}$ is locally free of rank one.
Since $h^{2,0}(X)= h^{3,1}(X)=0$, $H^0(C,R^1f_\ast\omega_X)=0$ by the Leray spectral sequence (which degenerates by Koll\'ar's theorem or because $C$ is a curve).
Similarly, $b_1(X)=b_1(C)$ implies $H^0(C,R^1f_\ast\mathcal O_X)=0$.
By relative Serre duality, 
$$
 R^1f_\ast\mathcal O_X\cong (R^1f_\ast \omega_{X/C})^\ast\cong (R^1f_\ast \omega_{X})^\ast ,
 $$
 where we used $\omega_C=\mathcal O_C$ in the last step.
Hence, by Serre duality on $C$,
$$
0=H^0(C,R^1f_\ast\mathcal O_X)= H^1(C,R^1f_\ast \omega_{X}).
$$
That is, $R^1f_\ast \omega_X$ is a line bundle on $C$ without cohomology and so it must be in $\Pic^0(C)$.

Since $h^{3,1}(X)=0$, 
the cohomology support locus
$$
\{L\in \Pic^0(C)\mid H^1(X,\omega_X\otimes f^\ast L)\neq 0\}
$$ 
is a union of torsion line bundles (see \cite{Si}), where we use that $C$ is an elliptic curve.
As before, the Leray spectral sequence $E_2^{p,q}=H^p(C,R^qf_\ast \omega_X)\Rightarrow H^{p+q}(X,\C)$ degenerates at $E_2$.
Hence, 
$$ 
\{L\in \Pic^0(C)\mid H^0(C,R^1f_\ast \omega_X\otimes L)\neq 0\}
$$
is also a union of torsion line bundles.
Since $R^1f_\ast \omega_X$ is a line bundle of degree zero on $C$, we deduce that it must be torsion. 
Hence, after a suitable \'etale base change $C'\to C$, we may (by flat base change)
 assume that $R^1f_\ast \omega_X\cong \mathcal O_C$.
This implies $h^{2,0}(X)=h^{1,3}(X)\neq 0$, and so we are done by Case 1, treated above.

Cases 1, 2 and 3 above finish the proof of Theorem \ref{thm:kod=0,1,2,3} in the case treated in Section \ref{subsec:X_c=curve,phi(F)=pt}.
Together with the results in Sections \ref{subsec:X_c=curve,phi(F)=surface} and \ref{subsec:X_c=curve,phi(F)=curve}, this finishes the proof of Theorem \ref{thm:kod=0,1,2,3} in the case of Kodaira dimension one.

\section{Proof of Theorem \texorpdfstring{\ref{thm:kod=0,1,2,3}}{4.1}} \label{sec:proof:thm:kod=0,1,2,3}


Let $X$ be a smooth projective minimal threefold which satisfies (\ref{item:exact}) as in Theorem \ref{thm:kod=0,1,2,3}.
By Corollary \ref{cor:Xnot_gentype} $X$ cannot be of general type, and so we are left with the cases $\kappa(X)=0,1,2$.

If $\kappa(X)=2$, then Theorem \ref{thm:kod=0,1,2,3} is proven in Section \ref{subsec:kappa=2}.

If $\kappa(X)=1$, then the result is proven in Sections  \ref{subsec:X_c=curve,phi(F)=surface}, \ref{subsec:X_c=curve,phi(F)=curve} and \ref{subsec:X_c=curve,phi(F)=pt} above.

If $\kappa(X)=0$, $mK_X$ is trivial for some $m>0$ by the abundance conjecture for complex projective threefolds, see \cite{Ka92}. 
It then follows from the Beauville--Bogomolov decomposition theorem (see \cite{bea83}) and the fact that $b_1(X)>0$ that some finite \'etale cover  $\tau:X'\to X$ splits into a product $A\times S$ with a positive-dimensional abelian factor $A$, as required.  

This concludes the proof of Theorem \ref{thm:kod=0,1,2,3}.

\section{1-forms on Mori fiber spaces}

In this Section we prove Theorems \ref{thm:classification} and \ref{thm:cup_omega_exact} in the case of negative Kodaira dimension.
The main results are Theorems \ref{thm:conic-bundle} and \ref{thm:delPezzo} below.

\subsection{Preliminaries}

Recall the following well-known lemma.

\begin{lemma}\label{lem:euler}
Let $f:X\rightarrow E$ be a surjective morphism from a compact complex manifold $X$ to an elliptic curve $E$. 
Then the topological Euler characteristic of $X$ is given by
$$
e(X)=\sum_{s\in E}(e(X_s)-e(X_g)),
$$ 
where $X_g$ denotes a fixed smooth fibre of $f$.
\end{lemma}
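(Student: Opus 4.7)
The plan is to exploit the vanishing $e(E)=0$ of the Euler characteristic of the elliptic curve together with generic smoothness of $f$, and to reduce everything to the standard additivity/multiplicativity properties of the topological Euler characteristic (taken with compact supports, or equivalently with closed supports since all spaces in sight are compact).

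First, let $\Sigma\subset E$ be the critical locus of $f$, i.e.\ the set of points $s\in E$ such that $f$ fails to be a submersion at some point of $X_s$; equivalently, the set of $s\in E$ over which the scheme-theoretic fibre $X_s$ is singular. Since $X$ is compact and $f$ is a surjective holomorphic map to a curve, $f$ is proper and generically smooth, so $\Sigma$ is a finite (possibly empty) subset of $E$. Over the open complement $U:=E\setminus\Sigma$ the restriction $f|_{f^{-1}(U)}\colon f^{-1}(U)\to U$ is a proper submersion of $C^\infty$ manifolds, hence by Ehresmann's theorem a locally trivial $C^\infty$-fibre bundle with typical fibre $X_g$.

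Next I would invoke the two standard properties of the Euler characteristic: additivity for the decomposition of $X$ into the open piece $f^{-1}(U)$ and the closed piece $f^{-1}(\Sigma)$, and multiplicativity in fibre bundles applied to $f^{-1}(U)\to U$. This gives
$$
e(X)=e(f^{-1}(U))+e(f^{-1}(\Sigma))=e(U)\cdot e(X_g)+\sum_{s\in\Sigma}e(X_s).
$$
Since $E$ is an elliptic curve, $e(E)=0$, so $e(U)=e(E)-|\Sigma|=-|\Sigma|$, and the displayed formula rearranges to
$$
e(X)=\sum_{s\in\Sigma}\bigl(e(X_s)-e(X_g)\bigr).
$$
Finally, for $s\in E\setminus\Sigma$ one has $X_s\cong X_g$ (even diffeomorphically, by the bundle structure on $f^{-1}(U)$), so the summand $e(X_s)-e(X_g)$ vanishes and we may harmlessly extend the sum over all $s\in E$, yielding the stated formula.

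There is no real obstacle here; the only points to state carefully are that $f$ is automatically proper (as $X$ is compact) so that Ehresmann applies, and that the additivity of $e$ we use is the one for compactly supported Euler characteristic, which agrees with the ordinary one on compact spaces.
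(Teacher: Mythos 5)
Your proof is correct; the paper states this lemma as ``well-known'' and gives no proof of its own, and your argument --- stratifying $E$ by the finite set $\Sigma$ of critical values, applying Ehresmann and multiplicativity of the (compactly supported) Euler characteristic over $E\setminus\Sigma$, and using $e(E)=0$ --- is exactly the standard one. The only points worth making explicit are the ones you already flag: properness of $f$ guarantees both that $\Sigma$ is a finite analytic subset and that Ehresmann applies, and additivity/multiplicativity are used for $e_c$, which coincides with $e$ on the compact pieces.
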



\begin{lemma}\label{lem:star-for-surface}
Let $f: X\rightarrow S$ be a surjective morphism from a smooth complex projective threefold $X$ to a smooth projective surface $S$ with $f^*: H^1(S, \C)\cong H^1(X, \C)$. 
Suppose that for $\omega\in H^0(S, \Omega_S^1)$, $(H^{\ast}(X,\C),\wedge  f^*\omega)$ is exact. Then $(H^{\ast}(S,\C),\wedge \omega)$ is exact.
\end{lemma}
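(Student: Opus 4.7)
The plan is to construct a $\C$-linear retraction $r\colon H^\ast(X,\C) \to H^\ast(S,\C)$ of $f^\ast$ that intertwines cup product with $f^\ast\omega$ on $X$ and cup product with $\omega$ on $S$. Granting such an $r$, the transfer of exactness is purely mechanical: given $\beta \in H^i(S,\C)$ with $\beta \wedge \omega = 0$, pulling back gives $f^\ast\beta \wedge f^\ast\omega = 0$, so by the exactness hypothesis on $X$ there is $\gamma \in H^{i-1}(X,\C)$ with $\gamma \wedge f^\ast\omega = f^\ast\beta$; applying $r$ to both sides and using its two defining properties yields $\beta = r(\gamma)\wedge\omega$, exhibiting $\beta$ as the image of $r(\gamma) \in H^{i-1}(S,\C)$ under $\wedge\omega$. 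The boundary cases $i=0$ and $i=4$ are covered by the same argument under the convention $H^{-1}(S)=0=H^5(S)$.

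To build $r$, I would exploit the ample cone of the projective threefold $X$. Pick an ample line bundle $L$ on $X$ and set $h := c_1(L)\in H^2(X,\C)$. Since a general fibre $F$ of $f$ is a curve on which $L$ has positive degree, the projection formula shows that $f_\ast h$ is the nonzero constant $c := \deg(L|_F) > 0$ in $H^0(S,\C)=\C$. I then define
\[
r(\gamma) := \tfrac{1}{c}\,f_\ast(h\cup\gamma),
\]
which preserves cohomological degree because $f_\ast\colon H^\ast(X,\C)\to H^{\ast-2}(S,\C)$ shifts degree by $-2$ while cup with $h$ shifts by $+2$. The projection formula $f_\ast(\delta \cup f^\ast\alpha) = f_\ast(\delta)\cup\alpha$, applied first with $\delta = h$ to give $r\circ f^\ast = \id$, and then with $\delta = h\cup\gamma$ and $\alpha = \omega$ to give $r(\gamma \wedge f^\ast\omega) = r(\gamma)\wedge\omega$, supplies the two required identities at once.

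The only real step is the construction of $r$, and there is no serious obstacle: an ample class on the smooth projective threefold $X$ is available by hypothesis, and the projection formula handles all the bookkeeping automatically, so neither Hodge-theoretic input nor the Leray spectral sequence is required. I would note in passing that the hypothesis $f^\ast\colon H^1(S,\C)\cong H^1(X,\C)$ is compatible with this scheme but is used only via the injectivity of $f^\ast$, which in any case follows for free from the existence of the retraction $r$; the surjectivity part simply reflects the natural setting in which Lemma \ref{lem:star-for-surface} is applied elsewhere in the paper.
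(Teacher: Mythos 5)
Your proof is correct, and it takes a genuinely different --- in fact more elementary and more general --- route than the paper's. The paper first rules out divisorial contractions of $S$ using Lemma \ref{lem:contracted-divisor}, then runs a diagram chase on the commutative ladder formed by $f^\ast$ and the two cup-product complexes: injectivity of $f^\ast$ in all degrees together with the hypothesis $f^\ast\colon H^1(S,\C)\cong H^1(X,\C)$ yields exactness of $(H^\ast(S,\C),\wedge\omega)$ at $H^0$, $H^1$, $H^2$ and $H^4$; exactness at $H^3(S,\C)$ does \emph{not} follow from the chase (since $f^\ast$ need not be surjective on $H^2$), and the paper instead observes that failure there would force $c_2(S)<0$, hence $S$ ruled over a curve of genus at least $2$, contradicting the fact that the Albanese image of $X$ is fibred by tori (Theorem \ref{thm:C=>...}). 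Your Gysin retraction $r=\tfrac{1}{c}f_\ast(h\cup -)$ bypasses all of this: the projection formula gives both $r\circ f^\ast=\id$ (since $f_\ast h=\deg(L|_F)=c>0$ in $H^0(S,\C)$, as one sees by pairing against the point class of $S$) and $r(\gamma\wedge f^\ast\omega)=r(\gamma)\wedge\omega$, and exactness then transfers uniformly in every degree, including the boundary cases. What your approach buys: no surface classification, no Albanese map, no appeal to Lemma \ref{lem:contracted-divisor} or Theorem \ref{thm:C=>...}, no use of the hypothesis on $H^1$ (surjectivity of $f$ and projectivity of $X$ suffice), and a statement valid for surjective morphisms between smooth projective varieties of arbitrary dimensions. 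What the paper's argument buys in exchange is the intermediate geometric information (minimality of $S$, exclusion of ruled surfaces over higher-genus curves) that is in the spirit of the classification results pursued elsewhere in the text, though it is not needed for the lemma itself.
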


\begin{proof}
Assume first that there is a divisorial contraction $S\rightarrow \overline S$ with $\overline S$ smooth.
Then the composition $g: X \rightarrow \overline S$ contracts a divisor $D$ to a point, contradicting Lemma \ref{lem:contracted-divisor}, applied to the Albanese morphism $a:X\to \Alb(X)$, because $a$ factors through $S$ by assumptions. 
Hence, a contraction as above does not exist and so $S$ is minimal. 
Next, we consider the following diagram
$$
\xymatrix{
  \C\ar[r] & H^1(X, \C)\ar[r]^-{\wedge f^*\omega}& H^2(X, \C)\ar[r]^-{\wedge f^*\omega} & H^3(X, \C)\ar[r]^-{\wedge f^*\omega}&H^4(X, \C)\\
 \C \ar[r] & H^1(S, \C)\ar[u]^{\cong}_{f^*}\ar[r]^-{\wedge \omega}& H^2(S, \C)\ar@{^{(}->}[u]_{f^*}\ar[r]^-{\wedge \omega} & H^3(S, \C)\ar@{^{(}->}[u]_{f^*}\ar[r]^-{\wedge \omega}&  H^4(S,\C)=\C\ar@{^{(}->}[u]_{f^*}.}
$$
By diagram chasing, we have that $(H^{\ast}(S,\C),\wedge \omega)$ is exact at $H^i(S,\C)$ for $i=0, 1, 2$. 
Also, $(H^{\ast}(S,\C),\wedge \omega)$ is exact at $H^4(S,\C)$.  
Assume $(H^{\ast}(S,\C),\wedge \omega)$ is not exact at $H^3(S,\C)$. 
Then $c_2(S)<0$ and so  $S$ is a ruled surfaces over a curve of genus $>1$, see \cite[Theorem VI.1.1]{barth-etal}.
The latter contradicts the fact that the Albanese image of $X$ (and hence that of $S$) is fibered by tori by item (\ref{item:thm:fibred}) in Theorem \ref{thm:C=>...}. 
\end{proof}

%

\subsection{Conic bundles over surfaces}

\begin{theorem} \label{thm:conic-bundle}
Let $X$ be a smooth projective threefold which admits the structure of a Mori fibre space  $f:X\to S$ over a projective surface $S$.
Let $\omega\in H^0(X, \Omega^1_X)$ be a holomorphic one-form on $X$ such that for any \'etale cover $\tau:X'\to X$, $(H^{\ast}(X',\C),\wedge \tau^\ast\omega)$ is exact. 
Then the following holds:
\begin{enumerate}
\item $S$ is a smooth projective surface which satisfies (\ref{item:holo}) $\Leftrightarrow$ (\ref{item:closed}) $ \Leftrightarrow$ (\ref{item:exact});\label{item:thm:conic:S} 
\item there is a smooth map $\pi:X\to A$ to a positive-dimensional abelian variety $A$;\label{item:thm:conic:pi}
\item $f$ is either smooth, or $A$ is an elliptic curve and the degeneration locus of $f$ is a disjoint union of smooth elliptic curves which are \'etale over $A$ (via the map $S\to A$ induced by $\pi$);\label{item:thm:conic:Delta_f} 
\item $\omega$ has no zero on $X$. \label{item:thm:conic:omega}
\end{enumerate}  
\end{theorem}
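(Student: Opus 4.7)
The plan is to descend $\omega$ to a one-form on $S$, invoke the two-dimensional case already established in \cite{Sch19}, and then analyze the conic-bundle structure to control the degeneration locus.

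Since $f$ is a Mori fibre space from a smooth threefold to a surface, Mori's classification of extremal rays on smooth threefolds shows that $S$ is smooth and $f$ is a standard conic bundle whose fibres are plane conics (possibly degenerate). Because $\PP^1$ carries no holomorphic one-form, any $\omega\in H^0(X,\Omega_X^1)$ pulls back: $\omega=f^*\omega_S$ for a unique $\omega_S\in H^0(S,\Omega_S^1)$. Every fibre of $f$---a smooth conic, a nodal conic, or a double line---satisfies $H^1(F,\C)=0$, so $R^1f_*\C=0$, and the Leray spectral sequence gives an isomorphism $f^*\colon H^1(S,\C)\to H^1(X,\C)$. Lemma~\ref{lem:star-for-surface} then yields exactness of $(H^*(S,\C),\wedge\omega_S)$; applying the same argument to the base change $X\times_S S'\to X$ of any \'etale cover $S'\to S$ shows that $\omega_S$ satisfies condition~(\ref{item:exact}) on $S$. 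The two-dimensional equivalence \cite[Theorem 1.3]{Sch19} then forces $\omega_S$ to be nowhere vanishing on $S$ and $S$ to satisfy (\ref{item:holo})$\,\Leftrightarrow\,$(\ref{item:closed})$\,\Leftrightarrow\,$(\ref{item:exact}), establishing item~(\ref{item:thm:conic:S}). The surface case of Conjecture~\ref{conj:smooth}, proved in \cite[Corollary 3.2]{Sch19}, produces a smooth surjective morphism $g\colon S\to A$ to a positive-dimensional abelian variety, so that $\pi:=g\circ f\colon X\to A$ is the candidate morphism required in item~(\ref{item:thm:conic:pi}).

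To establish the remaining items I would analyze the degeneration locus $\Delta\subset S$ of $f$. A local computation at a singular point $p$ of a fibre over $s\in\Delta$, using the standard nodal-fibre model $f(x,y,z)=(xy,z)$ (and its double-line analogue), shows that $\im(df_p)=T_s\Delta\subset T_{S,s}$; consequently $f^*\omega_S$ vanishes at $p$ if and only if the restricted one-form $\omega_S|_\Delta$ vanishes at $s$, and $\im(d\pi_p)=dg_s(T_s\Delta)$, so $\pi$ is smooth at $p$ iff $g|_\Delta$ is unramified at $s$. Invoking the absence of isolated zeros of $\omega$ from Theorem~\ref{thm:C=>...}(\ref{item:thm:Z(omega)}), Lemma~\ref{lem:contracted-divisor} applied to each prime divisor $D\subset f^{-1}(\Delta)$ (noting $\Alb(X)\cong\Alb(S)$ since the fibres of $f$ are rational), and the fibred-by-tori conclusion of Theorem~\ref{thm:C=>...}(\ref{item:thm:fibred}), I would conclude: if $\dim A = 2$, then $\Delta$ is empty and $f$ is smooth; if $\dim A = 1$, each component $\Delta_i$ of $\Delta$ is a smooth curve on which $\omega_S$ restricts to a nowhere-zero one-form---hence a smooth elliptic curve---and $g|_{\Delta_i}\colon\Delta_i\to A$ is \'etale. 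Distinct components of $\Delta$ must be disjoint, for an intersection would produce a more degenerate fibre whose irreducible components give further prime divisors in $X$ violating Lemma~\ref{lem:contracted-divisor}. Items~(\ref{item:thm:conic:Delta_f}), smoothness of $\pi$ in~(\ref{item:thm:conic:pi}), and~(\ref{item:thm:conic:omega}) then follow directly.

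The principal difficulty lies in the fine structural analysis of $\Delta$ demanded by item~(\ref{item:thm:conic:Delta_f}): one must combine Mori's local theory of standard conic bundles (the distinction between nodal and non-reduced degenerate fibres, and the behaviour at singular points of $\Delta$) with the global exactness constraint on $(H^*(X,\C),\wedge\omega)$. The crucial exclusions---that $\Delta$ has no singular points, and that no component of $\Delta$ is tangent to a fibre of $g$---each translate, via the local model, into either a forbidden zero of $\omega$ or a contracted prime divisor of $X$ violating Lemma~\ref{lem:contracted-divisor} or the fibred-by-tori conclusion.
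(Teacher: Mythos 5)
Your first paragraph follows the paper's proof of item (\ref{item:thm:conic:S}) essentially verbatim: descend $\omega$ via $R^1f_\ast\C=0$, apply Lemma \ref{lem:star-for-surface} to $X\times_SS'$ for every \'etale cover $S'\to S$, and invoke the surface case of \cite{Sch19}. That part is fine. The second half, however, replaces the paper's global arguments by a local analysis at singular points of fibres, and this is where there are genuine gaps.

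The central one: you claim that on each component $\Delta_i$ of the discriminant, $\omega_S|_{\Delta_i}$ is nowhere zero, deducing this from the absence of isolated zeros of $\omega$, Lemma \ref{lem:contracted-divisor}, and Theorem \ref{thm:C=>...}(\ref{item:thm:fibred}). Your local model does show that if $\omega_S|_{\Delta_i}$ has an \emph{isolated} zero then $\omega$ acquires an isolated zero, which is forbidden. But none of your tools excludes the case $\omega_S|_{\Delta_i}\equiv 0$, i.e.\ $\Delta_i$ a compact leaf of the foliation $\ker\omega_S$ (e.g.\ $S=E_1\times E_2$, $\omega_S=\pr_1^\ast dz_1$, $\Delta_i=\{e\}\times E_2$). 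In that case $Z(\omega)$ acquires a one-dimensional component (the curve of nodes over $\Delta_i$), which is not an isolated zero; Lemma \ref{lem:contracted-divisor} does not apply because $f^{-1}(\Delta_i)$ is \emph{not} contracted to a point by the Albanese map (only $\omega_S$, not every one-form, kills $\Delta_i$); and Theorem \ref{thm:C=>...}(\ref{item:thm:Z(omega)}) only gives $H^1(Z,\omega_X|_Z)=0$ for that component, which is a degree condition on $K_X|_Z$ with no evident contradiction. The paper closes exactly this case by a different mechanism: pass to an \'etale cover trivializing the monodromy of the two lines over $\Delta_i$, contract one ruling to exhibit the cover as a blow-up $Bl_{D'}Y$, and apply the blow-up formula for cohomology (as in Proposition \ref{prop:MMP}) to force $(H^\ast(D'),\wedge\omega|_{D'})$ exact, hence $D'$ elliptic with $\omega$ restricting nontrivially. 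You would need this (or a substitute) to make your argument work.

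A second gap is the global control of $\Delta_f$. The paper first proves $p_a(\Delta_f)=1$ from the Euler-number formula $e(X)=2(e(S)-p_a(\Delta_f)+1)$ together with $e(X)=e(S)=0$, and then rules out nodal rational components (none of the possible surfaces $S$ contains one), trees of rational curves attached at a single point (incompatible with relative Picard rank one), and so on. You assert that the exclusion of singular points of $\Delta$ ``translates into a forbidden zero of $\omega$ or a contracted prime divisor,'' but this is not carried out, and at a node of $\Delta$ your local model breaks down (the fibre is a double line, every point of which is singular, and $T_s\Delta$ is no longer a line), while a rational component of $\Delta$ gives a divisor to which Lemma \ref{lem:contracted-divisor} applies only to yield that it is a rational multiple of a fibre of $X\to E$ --- not an immediate contradiction. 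Without the arithmetic-genus computation or an equivalent global input, the structure of $\Delta_f$ asserted in item (\ref{item:thm:conic:Delta_f}) is not established.
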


\begin{proof}
By \cite[Theorem 3.5]{mori-annals}, $f$ is a conic bundle of relative Picard rank one, $S$ is smooth and the discriminant locus $\Delta_f$ of $f:X\to S$ is a curve with at worst ordinary double points.

Since $f^\ast$ induces an isomorphism on $H^1$, $\Alb(X)\cong \Alb(S)$ and so we get a commutative diagram
$$
\xymatrix{
   X\ar[dr]_-{\alpha} \ar[r]^-f &S\ar[d]^-{\beta}\\
   & A,}
$$
where $A=\Alb(X)=\Alb(S)$ and $\alpha,\beta$ are the respective Albanese morphisms.
Hence there is a one-form $\gamma\in H^0(A,\Omega_A^1)$ with $\omega=\alpha^\ast \gamma=f^\ast \beta^\ast \gamma$.

Let $\tau:S'\to S$ be a finite \'etale cover, and let $X':=X\times_SS'$ be the induced \'etale cover of $X$.
Then $X'\to S'$ is a conic bundle and so $b_1(X')=b_1(S')$.
Hence, Lemma \ref{lem:star-for-surface} implies that $(H^\ast(S',\C),\wedge \tau^\ast \beta^\ast\gamma)$ is exact.
This proves item (\ref{item:thm:conic:S}).
In particular, $S$ admits a smooth morphism to an elliptic curve or abelian surface by \cite[Corollary 3.2]{Sch19}, which proves item (\ref{item:thm:conic:pi}) in the case where $f$ is smooth.
For the remainder of the proof of items (\ref{item:thm:conic:pi}) and (\ref{item:thm:conic:Delta_f}), we may thus assume that $\Delta_f\neq \emptyset$.

By (\ref{item:thm:conic:S}) and \cite[Corollary 3.1]{Sch19}, $S$ is one of the following:
\begin{enumerate}[(a)]
\item a minimal ruled surface over an elliptic curve;\label{item:S=ruled}
\item an abelian surface;\label{item:S=abelian} 
\item a minimal elliptic surface $h:S\to C$ such that one of the following holds:
\begin{enumerate}[(i)]
\item $h$ is smooth, $C$ is an elliptic curve and $\beta^\ast \gamma \in h^\ast H^0(C,\Omega_C^1)$; \label{item:f-smooth}
\item $h$ is quasi-smooth, i.e.\ all singular fibres are multiple fibres, and the restriction of $\beta^\ast \gamma$ to a general fibre of $h$ is non-zero. \label{item:f-quasi-smooth}
\end{enumerate}
\end{enumerate} 
By \cite[lemma 7.1.10]{pash99}, 
$$ 
e(X)=2(e(S)-p_a(\Delta_f)+1),
$$
where $p_a$ denotes the arithmetic genus of $\Delta_f$.
Since $e(X)=0$ and $e(S)=0$ by exactness of  $(H^\ast(X,\C),\wedge  \alpha^\ast\gamma)$ and $(H^\ast(S,\C),\wedge  \beta^\ast\gamma)$, we deduce
$$
p_a(\Delta_f)=1.
$$
Since $\Delta_f$ is a nodal curve, this implies that each connected component of $\Delta_f$ is either a smooth elliptic curve with some attached trees of rational curves, or a rational curve with a single node with some attached trees of rational curves. 
Note that $\Delta_f$ cannot contain a smooth $\CP^1$ which is attached to the remaining components at only one point $p$, as the monodromy of the lines above $\CP^1\setminus\{p\}\cong \mathbb A^1$ would need to be trivial and so $f$ could not have relative Picard rank one, contradicting our assumptions.
Moreover, since none of the surfaces in (\ref{item:S=ruled})--(\ref{item:f-quasi-smooth}) contains a rational curve with at least one node, we conclude that $\Delta_f$ must be a disjoint union of smooth elliptic curves. 
By assumptions, $\Delta_f\neq \emptyset$ and so we pick an irreducible component $D\subset \Delta_f$, which is automatically a smooth connected component of $\Delta_f$.
Hence, each fibre of $f$ above a point $d\in D$ is given by two distinct lines which meet in a point, see e.g.\ \cite[Proposition 7.1.8(i)]{pash99}.

By \cite[Corollary 3.2]{Sch19}, some \'etale cover $S'\to S$ is either a simple abelian surface, or it splits into the product of an elliptic curve with another curve.
This implies that there is a finite \'etale cover $\tau:S'\to S$ whose restriction to $D$ induces an \'etale cover that trivializes the monodromy of the two lines above points of $D$ in the conic bundle $f:X\to S$.
Let $X'\to X$ be the finite \'etale cover induced by $S'$.
Then, $f':X'\to S'$  has relative Picard rank at least two, because the monodromy of the two lines above $D':=\tau^{-1}(D)$ is trivial.
In particular, $X'$ admits a divisorial contraction where exactly one of the two families of lines above $D'$ is contracted, and we exhibit $X'$ as the blow-up $X'\cong Bl_{D'}Y$ of a conic bundle $Y\to S'$ which is smooth above $D'\subset S'$.
By Proposition \ref{prop:MMP}, the pullback $\tau^\ast \beta^\ast \gamma$ restricts nontrivially on $D'$.
This implies that $\beta^\ast \gamma$ restricts nontrivially on $D$.
Hence, there is a surjective morphism $p:A\twoheadrightarrow E$ to an elliptic curve $E$, so that the composition $p\circ \beta: S\to E$ restricts on $D$ to an isogeny $D\to E$.

Since $S$ splits into a product after some \'etale cover by \cite[Corollar 3.2(e)]{Sch19}, one checks that $p\circ \beta: S\to E$ must be a smooth morphism.
Since the remaining components of $\Delta_f$ are all smooth elliptic curves that are disjoint from $D$, we conclude that in fact any component of $\Delta_f$ is \'etale over $E$ via the morphism $p\circ \beta$.
This concludes item (\ref{item:thm:conic:Delta_f}).
Moreover, it shows that the composition $p\circ \beta\circ f:X\to E$ is smooth, which proves item (\ref{item:thm:conic:pi}).

It remains to prove item (\ref{item:thm:conic:omega}).
We have seen that the one-form $\omega=f^\ast \beta^\ast \gamma$ comes from a form $\beta^\ast \gamma$  that has no zeros on $S$ by \cite[Corollary 3.1]{Sch19}.
Hence, $\omega$ has no zeros on $X$ in the case where $f$ is smooth.
If $f$ is not smooth, we have seen above that $\beta^\ast \gamma$ restricts nontrivially on each component of the ramification locus $\Delta_f$, and this implies that $\omega$ has no zeros along $f^{-1}(D)$, and hence no zeros on $X$.
This proves item (\ref{item:thm:conic:omega}), which finishes the proof of Theorem \ref{thm:conic-bundle}.
\end{proof}

\subsection{Del Pezzo fibrations}

\begin{theorem} \label{thm:delPezzo}
Let $X$ be a smooth complex projective threefold which admits a holomorphic one-form $\omega$ such that for any \'etale cover $\tau:X'\to X$, $(H^{\ast}(X',\C),\wedge \tau^\ast \omega)$ is exact.
Suppose that $X$ admits the structure of a Mori fibre space $f:X\to C$ over a curve $C$.
Then $C$ is an elliptic curve and $f$ is smooth.
In particular, $\omega$ has no zeros on $X$.
\end{theorem}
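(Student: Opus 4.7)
The plan is to reduce $\omega$ to a pullback from $C$, use the exactness condition to force $g(C)=1$, and then rule out singular fibres by analysing the zero locus $Z(\omega)$.

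Since the general fibre $F$ of $f$ is a del Pezzo surface, it is rationally connected and in particular $H^{1}(F,\C)=0$. The Leray spectral sequence therefore yields $f^{*}\colon H^{1}(C,\C)\xrightarrow{\sim} H^{1}(X,\C)$ and an injection $f^{*}\colon H^{2}(C,\C)\hookrightarrow H^{2}(X,\C)$; in particular, every holomorphic one-form on $X$ is pulled back from $C$, so $\omega=f^{*}\eta$ for a unique $\eta\in H^{0}(C,\Omega^{1}_{C})$. Exactness of the complex at the $\C\to H^{1}(X,\C)$ stage forces $\omega\neq 0$, hence $g(C)\geq 1$. Exactness at $H^{1}(X,\C)$ says $\ker(\wedge\omega|_{H^{1}(X,\C)})=\C\cdot\omega$; transported back to $C$ via the isomorphism on $H^{1}$ and the injection on $H^{2}$, this equals $f^{*}\ker(\wedge\eta\colon H^{1}(C,\C)\to H^{2}(C,\C))$, and a direct Hodge-theoretic computation (using that $\wedge\eta$ kills $H^{1,0}(C)$ entirely, and inside $H^{0,1}(C)$ precisely kills the hyperplane orthogonal to $\bar\eta$) gives $\dim\ker(\wedge\eta)=2g(C)-1$. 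So $2g(C)-1=1$, i.e.\ $C$ is an elliptic curve.

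Once $C$ is elliptic, $\eta$ has no zeros on $C$, and so $\omega=f^{*}\eta$ vanishes at $x\in X$ exactly when $df_{x}=0$; hence $Z(\omega)$ equals the critical locus of $f$, which on each fibre $X_{s}$ coincides with the singular scheme of $X_{s}$ in $X$. To prove that $f$ is smooth, the plan is to combine two ingredients. First, Theorem~\ref{thm:C=>...}(1) rules out isolated zeros of $\omega$, so every connected component of $Z(\omega)$ has positive dimension. Second, by Mori's structure theorem \cite{mori-annals} for fibre-type extremal contractions of smooth threefolds, the fibres of the del Pezzo fibration $f$ are irreducible Gorenstein del Pezzo surfaces, whose singular loci are zero-dimensional. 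Any critical point of $f$ would then be an isolated zero of $\omega$, contradicting Theorem~\ref{thm:C=>...}(1); hence $f$ has no critical points and is smooth. Since $\eta$ is nowhere vanishing, $\omega=f^{*}\eta$ then has no zeros on $X$.

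The main obstacle is controlling the singularity type of the fibres of $f$ --- ruling out everywhere non-reduced fibres and fibres with non-isolated singularities, where the isolated-zero argument does not directly apply. As a back-up strategy, Lemma~\ref{lem:euler} combined with $e(X)=c_{3}(X)=0$ (immediate from Lemma~\ref{lem:c1c2=c3=0}) gives $\sum_{s\in C}\bigl(e(X_{s})-e(X_{g})\bigr)=0$. In a del Pezzo fibration each genuinely singular fibre contributes a strictly negative term to this sum (via the Milnor number formula for isolated hypersurface singularities, and via a vanishing-cycle analysis in the remaining cases), so the sum vanishing forces every fibre to be smooth, giving the conclusion without invoking fine structural input from Mori theory.
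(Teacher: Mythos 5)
Your reduction to $\omega=f^*\eta$ and the Hodge-theoretic computation forcing $g(C)=1$ are fine and agree with the paper (which instead quotes item (2) of Theorem \ref{thm:C=>...}). The gap is in the smoothness of $f$. Your main argument asserts that the fibres of $f$, being irreducible reduced Gorenstein del Pezzo surfaces, have zero-dimensional singular loci; this is false. Irreducible reduced Gorenstein del Pezzo surfaces can be \emph{non-normal}, with one-dimensional singular locus, and for such a fibre the critical locus of $f$ contains a curve, so no contradiction with Theorem \ref{thm:C=>...}(1) arises. Indeed, the paper's first step is exactly the opposite deduction: since $\omega$ cannot have isolated zeros, any singular fibre must be non-normal --- that is where the real work \emph{begins}, not where it ends. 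Your back-up Euler-characteristic strategy correctly isolates the identity $\sum_{c}(e(X_c)-e(X_t))=0$ coming from Lemma \ref{lem:euler} and Lemma \ref{lem:c1c2=c3=0}, but the claim that every singular fibre contributes a strictly negative term is precisely the unproven content: the Milnor-number argument applies only to isolated hypersurface singularities, and the ``vanishing-cycle analysis in the remaining cases'' (the non-normal fibres, which are the only ones that can occur) is not supplied. Moreover, even granting $e(X_c)=e(X_t)$ for all $c$, smoothness does not follow formally.

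For comparison, the paper closes this gap as follows. From $b_2(X)=2$ and degeneration of the Leray spectral sequence one gets $H^0(C,R^2f_*\C)\cong\C$, generated by an ample class; this forbids local sections of $R^2f_*\C$ supported at points and yields the inequality $b_2(X_c)\le b_2(X_t)$, hence $e(X_c)\le e(X_t)$ (using $b_0(X_c)=b_4(X_c)=1$ by irreducibility). Lemma \ref{lem:euler} with $e(X)=0$ and $C$ elliptic upgrades this to equality, so $R^2f_*\C$ is a local system. Its monodromy is finite (a polarized weight-two variation of type $(1,1)$), so after a finite \'etale base change and re-running Proposition \ref{prop:MMP} one reduces to $\rho(X/C)=1$ with $R^2f_*\C$ trivial of rank one, forcing the general fibre to be $\CP^2$; then \cite[Theorem 3.1]{Fuj-delPezzo} excludes non-normal fibres, and since all singular fibres were shown to be non-normal, $f$ is smooth. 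Some version of this global input (or a genuine classification-based Euler-characteristic estimate for non-normal Gorenstein del Pezzo degenerations) is needed to complete your argument.
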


\begin{proof}
By \cite[Theorem 3.5.2]{mori-annals}, $C$ is a smooth projective curve and any fibre of $f$ is an irreducible reduced del Pezzo surface which is Gorenstein, as its canonical bundle is given by the restriction of the canonical bundle of $X$.
Moreover, since $\omega$ has its zeros exactly at the singular points of the fibres of $f$, and since it cannot have isolated zeros by item (\ref{item:thm:Z(omega)}) in Theorem \ref{thm:C=>...}, the singular fibres of $f$ are reduced and non-normal.

Note that $f^\ast$ induces an isomorphism $H^{1}(C)\cong H^1(X)$.
Since $(H^{\ast}(X,\C),\wedge \omega)$ is exact, we deduce that $C$ is an elliptic curve, see e.g.\ item (2) in Theorem \ref{thm:C=>...}. 
It remains to prove that $f$ is smooth.

Since $\rho(X/C)=1$ and $h^{2,0}(X)=0$, $b_2(X)=b_2(C)+1=2$.
Since $b_1(X)=b_1(C)=2$ and $(H^{\ast}(X,\C),\wedge \omega)$ is exact, we then conclude
$$
b_1(X)=b_5(X)=2,\ \ b_2(X)=b_4(X)=2,\ \ \text{and} \ \ b_3(X)=2 .
$$

We consider the Leray spectral sequence
$$
E_2^{p,q}=H^p(C,R^qf_\ast \C)\Longrightarrow H^{p+q}(X,\C).
$$
Since $C$ is a smooth projective curve, this spectral sequence degenerates at $E_2$, see e.g.\ \cite[Theorem 4.24]{peters-steenbrink}.
Since the general fibre of $f$ is a smooth del Pezzo surface, $R^1f_\ast \C$ is a skyscraper sheaf, which implies $E_2^{1,1}=0$. 
We also have 
$
E_2^{2,0}=H^2(C,\C)\cong \C$.
Since $b_2(X)=2$ and $E_2=E_\infty$, we conclude
\begin{align}\label{eq:delpezzo:H^0R^2}
H^0(C,R^2f_\ast \C)=E_2^{0,2}=E_\infty^{0,2}\cong \C .
\end{align}
In particular, $H^0(C,R^2f_\ast \C)$ is generated by the section of $R^2f_\ast \C$ induced by an ample class on $X$. 

By topological proper base change, we have for any $c\in C$ and any $i\in \N$:
$$
(R^if_\ast \C)_c\cong H^i(X_c,\C).
$$
We claim that this implies that $b_2(X_c)$ is bounded from above by the second Betti number of a smooth fibre of $f$.
Indeed, if not, then there is a neighbourhood $U$ of $c\in C$ and a section $s\in R^2 f_\ast \C(U)$ which is nonzero at the stalk at $c$ but zero at all other stalks.
But then after taking the extension by zero, $s$ can be extended to a nontrivial global section of $R^2f_\ast \C$ which is zero away from the point $c$, contradicting (\ref{eq:delpezzo:H^0R^2}).

Let $t\in C$ be a general point, so that the fibre $X_t$ is smooth.
For any $c\in C$, we have seen above that $b_2(X_c)\leq b_2(X_t)$.
Moreover, since $X_c$ is irreducible, $b_0(X_c)=b_4(X_c)=1$ and so the topological Euler characteristics satisfy 
$$
e(X_c)\leq  e(X_t)
$$
for all $c\in C$.
Since $C$ is an elliptic curve and $X$ has trivial topological Euler characteristic (cf.\ Lemma \ref{lem:c1c2=c3=0}), we deduce from Lemma \ref{lem:euler} that 
$$
e(X_c)=  e(X_t)
$$
for all $c\in C$.
In particular, 
$$
b_2(X_c)=b_2(X_t)
$$
for all $c\in C$ and so the stalks of $R^2f_\ast \C$ have the same dimension at all points of $C$.
This implies that $R^2f_\ast \C$ is a local system, because we have seen above that any local section of $R^2f_\ast \C$ vanishes if it vanishes generically.  

We claim that in order to prove that $f:X\to C$ is smooth, we may replace $X$ by any finite \'etale cover, induced by a finite \'etale cover $C'\to C$.
To prove this, let $\tau:X'\to X$ by a finite \'etale cover, induced by a finite \'etale cover $C'\to C$, and assume that $f':X'\to C'$ is smooth.
Then the natural map $X'\to C$ is smooth as well and since $X'\to X$ is \'etale, $f$ is smooth, as claimed. 

Since $R^2f_\ast \C$ is generically a polarized variation of Hodge structure of type $(1,1)$, its monodromy representation is finite.
This implies that up to replacing $X$ by a finite \'etale covering, we may assume that $R^2f_\ast \C$ is trivial.
Applying Proposition \ref{prop:MMP} to this base change repeatedly, we may assume that we again arrive at the situation of a Mori fibre space over $C$ and so $\rho(X/C)=1$. 
Since $R^2f_\ast \C$ is trivial, $\rho(X/C)=1$ implies that it has rank one and so the general fibre of $f$ must be $\CP^2$ (because it is a smooth del Pezzo surface with $b_2=1$).
It is not hard to prove directly that this implies that $f$ cannot have any non-normal fibres; alternatively, this claim also follows from \cite[Theorem 3.1]{Fuj-delPezzo}.
Since we have seen above that all singular fibres of $f$ are non-normal, this proves that $f$ is smooth. 
This finishes the proof of Theorem \ref{thm:delPezzo}.
\end{proof}

\section{Summary of the argument} \label{sec:proofs}

\begin{proof}[Proof of Theorems \ref{thm:A=B=C} and \ref{thm:classification}]
Since \ref{thm:classification} $\Rightarrow$ \ref{thm:A=B=C}, it suffices to prove Theorem \ref{thm:classification}.
Since (\ref{item:classification:MMP}) $\Rightarrow$ (\ref{item:holo}) $\Rightarrow$ (\ref{item:closed}) $\Rightarrow$ (\ref{item:B'}') is clear and (\ref{item:B'}') $\Rightarrow$ (\ref{item:exact}) is proven in the appendix (see Theorem \ref{thm:B'}), it suffices to prove (\ref{item:exact}) $\Rightarrow$  (\ref{item:classification:MMP}) for smooth projective threefolds.
For this, let $X$ be a smooth projective threefold and assume that (\ref{item:exact}) holds for the holomorphic one-form $\omega$ on $X$.

By Corollary \ref{cor:MMP}, the minimal model program for $X$ yields a smooth projective threefold $X^{\min}$ and a proper birational morphism $\sigma:X\to X^{\min}$ which is a sequence of blow-ups along elliptic curves which are not contracted by the natural map to $\Alb(X^{\min})$.
This proves item (\ref{item:thm:classification:sigma}). 

To prove items (\ref{item:thm:classification:pi}), (\ref{item:thm:classification:kappa=0,1,2,3}) and (\ref{item:thm:classification:kappa=-infty}), we may replace $X$ by $X^{\min}$ and assume that $X$ is either minimal or a Mori fibre space.
The assertions then follow from 
 Theorem \ref{thm:kod=0,1,2,3}, Corollary \ref{cor:kod=0,1,2,3}, and Theorems \ref{thm:conic-bundle} and \ref{thm:delPezzo}. 

This concludes the proof of Theorems \ref{thm:A=B=C} and \ref{thm:classification}.
\end{proof}

\begin{proof}[Proof of Theorem \ref{thm:cup_omega_exact}]
By Proposition \ref{prop:MMP}, it suffices to prove Theorem \ref{thm:cup_omega_exact} for a smooth projective threefold $X$ which is either minimal or a Mori fibre space.
In the latter case, Theorem \ref{thm:cup_omega_exact} follows from Theorems \ref{thm:conic-bundle} and \ref{thm:delPezzo}; in the former case, Theorem \ref{thm:cup_omega_exact} follows from 
item (\ref{item:omega}) in Corollary \ref{cor:kod=0,1,2,3}. 
This concludes the proof of Theorem \ref{thm:cup_omega_exact}.
\end{proof}

\begin{proof}[Proof of Corollary \ref{cor:fibre-bundle}]
Let $f:X\to A$ be a smooth morphism from a smooth projective threefold $X$ with $\kappa(X)\geq 0$ to an abelian variety $A$.
If $A$ is zero-dimensional, the claim is trivial and so we may assume that $\dim A>0$.
Hence, $X$ carries a holomorphic one-form without zeros and so the equivalent conditions (\ref{item:holo}) $\Leftrightarrow$ (\ref{item:closed}) $ \Leftrightarrow$ (\ref{item:B'}')  $\Leftrightarrow$ (\ref{item:exact}) $\Leftrightarrow$ (\ref{item:classification:MMP}) hold by Theorem \ref{thm:classification}.

If $X$ is not minimal, then by Proposition \ref{prop:MMP}, $X$ is the blow-up $Bl_CY$ of a smooth projective threefold $Y$ along an elliptic curve $C\subset Y$. 
Since $f:X\to A$ factors through $Y$, this implies that $A$ must be an elliptic curve and $C$ must be \'etale over $A$ ($f$ is non-flat if $\dim A=3$, and it has singular fibres if $\dim A=2$ or if $\dim A=1$ and $C$ is not \'etale over $A$). 
Hence, up to replacing $X$ by $Y$, we may inductively assume that $X$ is minimal.
We claim that under this assumption, $f$ is an analytic fibre bundle.
To prove this, we may replace $X$ by any finite \'etale cover.
Hence, by (\ref{item:thm:classification:kappa=0,1,2,3}), we may assume that $X=A'\times S'$ is a product of an abelian variety $A'$ of positive dimension and a smooth projective variety $S'$.
Then for any $s\in S'$, $A'\times \{s\}$ maps to a translate of a fixed abelian subvariety $A''\subset A$.
Let $B:=A/A''$.
If $\dim B=0$, then we are done.
Otherwise, for any $a\in A'$, consider the natural composition 
$$
S'=\{a\}\times S'\hookrightarrow A'\times S' =X \stackrel{f}\longrightarrow A\twoheadrightarrow A/A''=B ,
$$
which is smooth, because $f$ is smooth.
Since $\dim B>0$, \cite[Corollary 3.2]{Sch19} implies that up to a further \'etale covering, $S'$ splits of a positive-dimensional abelian variety as a direct factor.
Arguing as before therefore concludes the proof of the corollary by induction on the dimension.
\end{proof}

\appendix

\section{}

The purpose of this appendix is to show that the arguments in \cite{Sch19} easily generalizes to prove the following, which is inspired by questions of Kieran Kedlaya and Burt Totaro.

\begin{theorem} \label{thm:B'}
Let $X$ be a compact connected K\"ahler manifold.
Then (\ref{item:B'}') $\Rightarrow$ (\ref{item:exact}).
\end{theorem}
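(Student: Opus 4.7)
The plan is to adapt the argument of \cite[Theorem 1.2]{Sch19} for the implication (B) $\Rightarrow$ (C) to the weaker hypothesis (B'). The key observation is that the cohomological machinery of that proof uses the smooth fibration $X\to S^1$ coming from Tischler's theorem only through its effect on $H^\ast(X,\mathbb Q)$ via the Leray spectral sequence, and this effect is already available under (B').

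First, via the homotopy equivalence $X\simeq Y$, transfer the class $f^\ast[\theta]\in H^1(Y,\mathbb Q)$ to $\eta\in H^1(X,\mathbb Q)$, where $\theta\in H^1(S^1,\mathbb Q)$ is the standard generator. Using the Hodge decomposition $H^1(X,\mathbb C)=H^{1,0}(X)\oplus H^{0,1}(X)$ and the fact that $\eta$ is real, write $\eta=\omega+\bar\omega$ for a unique $\omega\in H^0(X,\Omega^1_X)$; I claim that this $\omega$ satisfies~(\ref{item:exact}).

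To verify this, consider the Leray spectral sequence
\[
E_2^{p,q}=H^p(S^1,R^q f_\ast\mathbb Q)\Longrightarrow H^{p+q}(Y,\mathbb Q),
\]
which degenerates at $E_2$ because $S^1$ has cohomological dimension one. By (B'), the sheaves $R^q f_\ast\mathbb Q$ are finite-rank local systems on $S^1$, so the spectral sequence yields a finite-dimensional two-step filtration on $H^\ast(Y,\mathbb Q)\cong H^\ast(X,\mathbb Q)$ whose graded pieces are computed from invariants and coinvariants of the monodromy acting on fibre cohomology. This is the same Leray-type filtration used in \cite[Theorem 1.2]{Sch19}, so its argument---combining this filtration with the purity of the Hodge structure on $X$---carries over verbatim to yield exactness of $\wedge\omega$ on $H^\ast(X',\mathbb C)$ for every finite \'etale cover $\tau:X'\to X$. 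For the \'etale-cover clause, note that a finite \'etale cover $X'\to X$ corresponds via the homotopy equivalence to a CW cover $Y'\to Y$, and the induced map $Y'\to S^1$ (possibly post-composed with the finite cover $S^1\to S^1$ corresponding to the image of $\pi_1(Y')\to \pi_1(S^1)$) is again a finite $\mathbb Q$-homology fibration, so the argument extends to $X'$ without change.

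The main technical difficulty lurking inside the appeal to \cite{Sch19} is the passage from exactness of $\wedge\eta=\wedge(\omega+\bar\omega)$---which follows rather directly from the Leray data together with Hodge purity---to exactness of $\wedge\omega$ alone. This is delicate, since in general the exactness of the total complex associated with a bi-complex does not force exactness of its rows; the argument in \cite{Sch19} handles it via a careful analysis of the bi-complex on $H^{p,q}(X)$ with horizontal differential $\wedge\omega$ and vertical differential $\wedge\bar\omega$, exploiting the rigidity of the Hodge decomposition on a compact K\"ahler manifold. Since this analysis is purely cohomological and does not invoke any geometric structure of $f$, it applies unchanged in the (B') setting.
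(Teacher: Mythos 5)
There is a genuine gap: your proof is, in substance, an appeal to ``the argument of \cite[Theorem 1.2]{Sch19} carries over verbatim,'' but the mechanism you describe is not the one that argument uses, and the two steps you do make explicit are exactly the ones that fail. First, exactness of $\wedge\eta$ on $H^\ast(Y,\Q)$ does \emph{not} follow ``rather directly from the Leray data'': the $E_2$-degenerate Leray filtration identifies $\wedge f^\ast\theta$ in each degree with the composition $\ker(T-1)\hookrightarrow V^q\twoheadrightarrow \operatorname{coker}(T-1)$ for the monodromy $T$ on $V^q=H^q(\text{fibre},\Q)$, and this is an isomorphism only if $T$ has no Jordan blocks at the eigenvalue $1$; for an abstract finite $\Q$-homology fibration of a CW complex this semisimplicity is not automatic and you do not address it. Second, and more seriously, the passage from exactness of $\wedge(\omega+\bar\omega)$ to exactness of $\wedge\omega$ is not handled in \cite{Sch19} by a ``purely cohomological'' bi-complex analysis that ``does not invoke any geometric structure of $f$'' --- as you yourself note, exactness of a total complex does not imply exactness of its rows, and no formal argument repairs this. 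What \cite[Section 2.3]{Sch19} actually does is reduce condition (\ref{item:exact}) to the existence of a rank-one local system $L$ on $X$ with trivial first Chern class and $H^i(X,L)=0$ for all $i$, via the Green--Lazarsfeld/Simpson theory of cohomology jump loci in $\Hom(\pi_1(X),\C^\ast)$; the one-form $\omega$ arises as the $(1,0)$-part of the tangent direction of the one-parameter family of characters through the trivial one. Your proposal never constructs, or even mentions, such a local system, so the load-bearing step is missing.

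The correct adaptation to (\ref{item:B'}') is in fact shorter than what you attempt. Both the character variety $\Hom(\pi_1(X),\C^\ast)$ and the cohomology of rank-one local systems are homotopy invariants, so by the reduction of \cite[Section 2.3]{Sch19} it suffices to produce on $Y$ a rank-one local system with no cohomology. Take $L=f^\ast L_\lambda$, where $L_\lambda$ is the local system on $S^1$ with monodromy a \emph{general} $\lambda\in\C^\ast$. By hypothesis each $R^qf_\ast\Q$ is a local system of finite rank, so for general $\lambda$ the number $\lambda^{-1}$ avoids all eigenvalues of the monodromy on its stalks; hence $H^0(S^1,L_\lambda\otimes R^qf_\ast\C)=0$, and $H^1$ vanishes as well since the Euler characteristic of any finite-rank local system on $S^1$ is zero. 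The Leray spectral sequence then gives $H^k(Y,L)=0$ for all $k$. Note that this route also disposes of the \'etale-cover clause uniformly (it is built into the jump-locus criterion), rather than requiring your separate argument on covers $Y'\to Y$. I would encourage you to rewrite the proof around the construction of this local system rather than around the class $\eta$ and the constant-coefficient Leray filtration.
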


\begin{proof}
Let $X$ be a compact connected K\"ahler manifold.
In order to show that (\ref{item:exact}) holds, it thus suffices by the argument in \cite[Section 2.3]{Sch19} to show that there is a local system $L$ with stalk $\C$ on $X$ whose first Chern class is trivial and such that $H^i(X,L)=0$ for all $i$.
Since the space of such local systems is given by $\Hom(\pi_1(X),\C^\ast)$, it depends only on the homotopy type of $X$.
Moreover, the corresponding cohomology groups do not change when we replace $X$ by a homotopy equivalent CW complex, see e.g.\ \cite[VI.2.6*]{whitehead}.
Hence, to prove (\ref{item:B'}') $\Rightarrow$ (\ref{item:exact}), it suffices to show that any CW complex $Y$ which admits a finite $\Q$-homology fibration $f:Y\to S^1$ carries a local system with stalk $\C$, trivial first Chern class and no cohomology.
Following a suggestion of Botong Wang (see \cite[Remark 2.3]{Sch19}), we claim that the local system $L=f^\ast L_\lambda$ on $Y$ has this property, where $L_\lambda$ is a generic local system on $S^1$ with monodromy given by a general element $\lambda \in \C$.
To show this claim, consider the Leray spectral sequence with $E_2$-term
$$
E_2^{p,q}=H^p(S^1,R^qf_\ast f^\ast L_\lambda)=H^p(S^1,L_\lambda\otimes R^qf_\ast \C) \Longrightarrow H^{p+q}(Y,L) .
$$
By our assumptions, the sheaf $R^qf_\ast \C$ is a local system on $S^1$ with stalk a finite dimensional $\C$-vector space $V^q$.
Since $\lambda\in \C$ is general, $\lambda^{-1}$ is different from all eigenvalues of the natural monodromy operator on $V^q$.
Hence, $L_\lambda\otimes R^qf_\ast \C$ is a local system on $S^1$ which corresponds to a finite-dimensional $\C$-vector space together with a monodromy operator whose eigenvalues are all different from one.
In particular, $H^0(S^1,L_\lambda\otimes R^qf_\ast \C)=0$.
Since the Euler characteristic of any local system of finite rank on $S^1$ is zero, we also get $H^1(S^1,L_\lambda\otimes R^qf_\ast \C)=0$.
Hence, $E_2^{p,q}=0$ for all $p,q$ and so $H^k(X,L)=0$ for all $k$.
This concludes the proof.
\end{proof}

\section*{Acknowledgement}
The second author thanks Dieter Kotschick for sending him the preprint \cite{Ko} in spring 2013, where he poses the problem about the equivalence of (\ref{item:holo}) and (\ref{item:closed}).
We are grateful for useful comments and conversations to Antonella Grassi, Thomas Peternell, Mihnea Popa, Christian Schnell and Burt Totaro.
Both authors are supported by the DFG grant ``Topologische Eigenschaften von Algebraischen Variet\"aten'' (project no.\ 416054549).

\end{document}